
\documentclass[a4paper, 10pt, twoside]{amsart}
\usepackage[top=3cm, bottom=3cm, left=2.5cm, right=2.5cm]{geometry}
\usepackage[pdftex]{graphicx}


\usepackage[utf8]{inputenc}
\usepackage[T1]{fontenc}

\usepackage[english]{babel}
\usepackage[noadjust]{cite}

\usepackage{amsthm,amssymb,epsfig,mathtools}
\usepackage{mathrsfs}  
\usepackage{bbm}        
\usepackage{esint}     
\usepackage{dsfont}
\usepackage{color}
\usepackage{enumerate}
\usepackage{url}
\usepackage{algorithm2e}

\usepackage{calc}
\usepackage[inline]{enumitem}
\usepackage[normalem]{ulem}
\usepackage{url}
\usepackage[bookmarks, bookmarksnumbered, pdfstartview={XYZ null null 1.00}]{hyperref}

\usepackage{tikz}
\usetikzlibrary{shapes, arrows, arrows.meta, decorations.markings}

\usepackage{todonotes}
\usepackage[ulem=normalem,draft]{changes}
\usepackage{cancel}

\makeindex

\numberwithin{equation}{section}

\newcommand{\theoname}{Theorem}
\newcommand{\lemmname}{Lemma}
\newcommand{\coroname}{Corollary}
\newcommand{\propname}{Proposition}
\newcommand{\definame}{Definition}

\newcommand{\remkname}{Remark}
\newcommand{\explname}{Example}

\theoremstyle{plain}
\newtheorem{theorem}{\theoname}[section]
\newtheorem{lemma}[theorem]{\lemmname}

\newtheorem{proposition}[theorem]{\propname}

\theoremstyle{definition}
\newtheorem{definition}[theorem]{\definame}
\newtheorem{remark}[theorem]{\remkname}

\newlist{hypothesis}{enumerate}{1}
\setlist[hypothesis]{label={\textup{(H\arabic*)}}, ref={(H\arabic*)}, leftmargin=*, widest*=10}

  {\list{}{\leftmargin=.5in\rightmargin=.5in}  \item[]  }%
  {\endlist}

\def\dd{{\rm d}}
\newcommand{\eqdef}{\ensuremath{\stackrel{\mbox{\upshape\tiny def.}}{=}}}

\newcommand{\norm}[1]{\left\lVert#1\right\rVert}
\newcommand{\inner}[1]{\left\langle#1\right\rangle}

\def\1B{{\bf  1}}

\def\dist{\mathop{\rm dist}}

\def\supp{\mathop{\rm supp}}
\def\id{{\mathop{\rm id}}}


\newcommand{\cvstar}[1]{\xrightharpoonup[#1]{\star}}

\newcommand{\cvstrong}[2]{\xrightarrow[#1]{#2}}


\def\inf{\mathop{\rm inf}}
\def\sup{\mathop{\rm sup}}

\def\min{\mathop{\rm min}}
\def\max{\mathop{\rm max}}

\def\argmin{\mathop{\rm argmin}}

\DeclareMathOperator{\proj}{proj}

\newcommand{\mres}{\mathbin{\vrule height 1.6ex depth 0pt width
		0.13ex\vrule height 0.13ex depth 0pt width 1.3ex}}



\newcommand{\Golab}{Go\l\k{a}b}
\def\H{\mathscr{H}}

\newcommand{\Sigmayrn}{\Sigma_{y_0,r_n}}

\setcounter{tocdepth}{2}

\title[]{Absence of loops for the Wasserstein-\texorpdfstring{$\H^1$}{H1} problem: the concentration/blow-up argument}

\author{João Miguel Machado}
\address{Lagrange Mathematical and Computing Center\\
103 rue de Grenelle\\
Paris, 75007}
\email{joao-miguel.machado@ceremade.dauphine.fr}

\keywords{Optimal Transport, Geometric Measure Theory, Length}
\subjclass[2000]{49Q20, 28A75}

\date{\today}

\begin{document}

\begin{abstract}
  In the present work we prove that minimizers of the Wasserstein-$\mathscr{H}^1$ problem, introduced recently in~\cite{chambolle2025one}, are trees in three cases: when the target measure is a sum of finitely many Dirac masses, when it has a bounded density, when it is a mixture of an atomic measure and a measure with bounded density, {  and when it is a mixture of both these cases.} 
\end{abstract}
\maketitle
\tableofcontents

\section{Introduction}\label{section.introduction}
Consider the following problem: given a probability measure $\varrho_0 \in \mathscr{P}(\mathbb{R}^d)$, how can it best be approximated with a 1-dimensional set, that is how can we approximate it with a measure uniformly distributed over such lower dimensional sets? This question has been recently addressed with a variational approach in~\cite{chambolle2025one} with the following variational problem:\footnote{We recall that $\mu\mres A$ denotes the restriction measure of a Radon measure $\mu$ onto the set $A$, that is, for all Borel sets $B$, we have $\mu\mres A(B) \eqdef \mu(A\cap B)$.}
\begin{equation}\label{problem.shape_optimization}
	\tag{$P_\Lambda$}
	\inf_{\Sigma \text{ connected }} 
	W_p^p\left(
        \varrho_0, 
        \frac{1}{\H^1(\Sigma)}\H^1\mres \Sigma
    \right)
    + \Lambda \H^1(\Sigma),
\end{equation}
where $W_p$ corresponds to the Wasserstein distance, defined via the value of an optimal transport problem~\cite{ambrosio2021lectures,villani2009optimal,santambrogio2015optimal}, that metrizes the weak convergence of probability measures and $\H^1$ denotes the $1$-dimensional Hausdorff measure~\cite{ambrosio2000functions,maggi2012sets}. Notice that the penalization of the total length is necessary otherwise the Wasserstein distance could be made arbitrarily small by choosing a suitable space-filling curve, whereas without the connectedness constraint the same could be achieved by approximating $\varrho_0$ with a sequence of atomic measures, while having zero length.

In~\cite{chambolle2025one} existence of an optimal network $\Sigma$ has been proven, provided that the regularization parameter $\Lambda$ is small enough and that $\varrho_0$ does not give mass to $1$-dimensional sets. In addition, some qualitative properties of this problem have been studied. Still in~\cite{chambolle2025one} minimizers are shown to be Ahlfors regular; while in~\cite{machado2025phase} a phase-field approximation result for~\eqref{problem.shape_optimization} has been derived with an Ambrosio-Tortorelli type functional. The goal of this work is to show that optimal networks are trees, \textit{i.e.} none of its subsets is homeomorphic to $\mathbb{S}^1$. 

Differently from other similar problems, such as the Steiner~\cite{brazil2014history,paolini2013existence}, or the average distance minimizers problem~\cite{paolini2004qualitative,buttazzo2003optimal}, existence of an optimal network to~\eqref{problem.shape_optimization} does not follow directly from the \textit{Direct Method of the Calculus of Variations}. The difficulty stems from the lack of compatibility between the convergence of sets (Hausdorff convergence) and the narrow convergence of measures, see Section~\ref{section.preliminaries} for more details on such notions of convergence. Indeed, cluster points for sequences of the form $\H^1\mres \Sigma_n$ are not necessarily of the form $\H^1\mres\Sigma$ due to concentration of mass effects. 

For this reason, its lower semi-continuous relaxation is introduced, for which existence of minimizers can be easily shown with the direct method. This relaxation can be written as
\begin{equation}\label{problem.shape_optimization_relaxed}
	\tag{$\overline{P}_\Lambda$}
	\inf_{
		\substack{
			\nu \in \mathscr{P}_p(\mathbb{R}^d)
		}
	  }
	W_p^p(\varrho_0, \nu) + \Lambda \mathcal{L}(\nu)
\end{equation}
where the \textit{length functional} $\mathcal{L}$ is defined for a probability measure $\nu \in \mathscr{P}(\mathbb{R}^d)$ as 
\begin{equation}\label{eq.length_functional}
  \mathcal{L}(\nu)
    \eqdef
    \begin{cases}
      \min 
      \left\{
          \alpha \ge 0 : 
              \alpha \nu \ge \H^1\mres\supp\nu
      \right\},& \text{ if $\supp \nu$ is connected,}\\ 
      +\infty,& \text{ otherwise.}
    \end{cases}
\end{equation}
It can be proven that the quantity in~\eqref{eq.length_functional} coincides with the l.s.c.~relaxation of the functional defined by $\displaystyle \frac{1}{\H^1(\Sigma)} \H^1\mres \Sigma \mapsto \H^1(\Sigma)$, if $\Sigma$ is connected, and $+\infty$ otherwise. For more details and properties on the length functional, the reader is referred to~\cite{chambolle2025one} where it was first introduced, or to Section~\ref{sec.golab_length_tangent} for a brief discussion. 

With this new formulation of the problem, the proof of existence consists of showing that any minimizer of~\eqref{problem.shape_optimization_relaxed} is uniformly distributed over its support, being therefore a solution to~\eqref{problem.shape_optimization}. Heuristically this can be easily done once one has proven the following property: suppose that $\nu$ is a minimizer of~\eqref{problem.shape_optimization_relaxed}, if it has an excess, that is regions where its density is not constant, it can be proved that this excess to the uniform density is formed through projections onto $\Sigma$, in other words the optimal transportation plan from $\varrho_0$ onto the minimizer is such that the mass of the excess is at minimal distance from $\Sigma$, see step (1) below in this section or the thesis of Proposition~\ref{prop.projection_to_loops}. Therefore, in principle one could construct a better competitor with a constant density by replacing any excess of the uniform density with segments in the opposite direction of the projections, as represented in Figure~\ref{figure.sketch_existence}. 
\begin{figure}[h!tbp]
  \centering
  \tikzset{every picture/.style={line width=0.7pt}} 

\begin{tikzpicture}[x=0.65pt,y=0.7pt,yscale=-1,xscale=1]

\draw   (150.38,45.6) -- (634.1,45.6) -- (500.82,277.6) -- (17.1,277.6) -- cycle ;
\draw [color={rgb, 255:red, 74; green, 144; blue, 226 }  ,draw opacity=1 ][line width=1.5]    (62.05,258.15) .. controls (76.07,274.76) and (140.41,245.42) .. (175.4,177.28) .. controls (210.39,109.13) and (173.22,131) .. (191.43,116.39) .. controls (209.64,101.78) and (198.62,121.22) .. (177.66,105.17) .. controls (156.7,89.12) and (144.9,92.6) .. (131.77,99.58) ;
\draw [color={rgb, 255:red, 74; green, 144; blue, 226 }  ,draw opacity=1 ][line width=1.5]    (172.79,102.74) .. controls (182.79,78.11) and (154.13,120.93) .. (176.51,68.87) ;
\draw [color={rgb, 255:red, 74; green, 144; blue, 226 }  ,draw opacity=1 ][line width=1.5]    (70.77,230.57) .. controls (145.76,233.9) and (39.93,226.14) .. (101.43,254.95) ;
\draw  [dash pattern={on 0.84pt off 2.51pt}]  (175.4,177.28) -- (270.8,165.9) ;
\draw  [dash pattern={on 0.84pt off 2.51pt}]  (153.59,211.44) -- (238.45,221.05) ;
\draw  [fill={rgb, 255:red, 208; green, 2; blue, 27 }  ,fill opacity=0.79 ] (240.54,175.11) .. controls (256.98,160.46) and (299.4,169.7) .. (299.69,175.11) .. controls (299.98,180.53) and (256.76,188.18) .. (259.41,207.98) .. controls (262.07,227.78) and (228.39,220.03) .. (215.79,215.9) .. controls (203.19,211.78) and (245.17,201.38) .. (250.02,193.46) .. controls (254.87,185.54) and (224.11,189.77) .. (240.54,175.11) -- cycle ;
\draw  [dash pattern={on 0.84pt off 2.51pt}]  (198.59,111.08) -- (252.97,64.68) ;
\draw  [dash pattern={on 0.84pt off 2.51pt}]  (201.73,111.08) -- (214.92,110.96) -- (269.34,110.45) ;
\draw  [fill={rgb, 255:red, 208; green, 2; blue, 27 }  ,fill opacity=0.72 ] (226.54,90.81) .. controls (232.37,79.78) and (246,70.85) .. (256.98,70.85) .. controls (267.95,70.85) and (272.11,79.78) .. (266.28,90.81) .. controls (260.44,101.83) and (246.82,110.76) .. (235.84,110.76) .. controls (224.87,110.76) and (220.7,101.83) .. (226.54,90.81) -- cycle ;
\draw  [fill={rgb, 255:red, 208; green, 2; blue, 27 }  ,fill opacity=1 ] (195.04,111.08) .. controls (194.96,109.2) and (196.47,107.68) .. (198.43,107.68) .. controls (200.39,107.68) and (202.05,109.2) .. (202.14,111.08) .. controls (202.23,112.95) and (200.71,114.48) .. (198.75,114.48) .. controls (196.79,114.48) and (195.13,112.95) .. (195.04,111.08) -- cycle ;
\draw [color={rgb, 255:red, 74; green, 144; blue, 226 }  ,draw opacity=1 ][line width=1.5]    (326.74,260.97) .. controls (339.17,277.83) and (400.62,248.06) .. (436.21,178.92) .. controls (471.79,109.78) and (436.09,131.96) .. (453.75,117.14) .. controls (471.4,102.32) and (460.27,122.05) .. (441.32,105.76) .. controls (422.36,89.47) and (411.17,93.01) .. (398.59,100.09) ;
\draw [color={rgb, 255:red, 74; green, 144; blue, 226 }  ,draw opacity=1 ][line width=1.5]    (436.86,103.29) .. controls (447.24,78.3) and (418.63,121.75) .. (441.75,68.93) ;
\draw [color={rgb, 255:red, 74; green, 144; blue, 226 }  ,draw opacity=1 ][line width=1.5]    (336.05,232.99) .. controls (406.11,236.36) and (307.36,228.5) .. (363.73,257.72) ;
\draw  [dash pattern={on 0.84pt off 2.51pt}]  (436.21,178.92) -- (525.98,167.37) ;
\draw  [dash pattern={on 0.84pt off 2.51pt}]  (414.37,213.57) -- (493.42,223.33) ;
\draw  [fill={rgb, 255:red, 208; green, 2; blue, 27 }  ,fill opacity=0.79 ] (497.28,176.72) .. controls (513.27,161.86) and (552.6,171.23) .. (552.64,176.72) .. controls (552.69,182.21) and (511.91,189.98) .. (513.58,210.07) .. controls (515.24,230.16) and (484.04,222.29) .. (472.41,218.11) .. controls (460.79,213.92) and (500.52,203.38) .. (505.39,195.34) .. controls (510.26,187.3) and (481.29,191.59) .. (497.28,176.72) -- cycle ;
\draw  [dash pattern={on 0.84pt off 2.51pt}]  (460.67,111.75) -- (513.5,64.68) ;
\draw  [dash pattern={on 0.84pt off 2.51pt}]  (463.6,111.75) -- (475.96,111.63) -- (526.93,111.12) ;
\draw  [fill={rgb, 255:red, 208; green, 2; blue, 27 }  ,fill opacity=0.72 ] (487.67,91.19) .. controls (493.59,80) and (506.72,70.94) .. (516.99,70.94) .. controls (527.27,70.94) and (530.79,80) .. (524.87,91.19) .. controls (518.95,102.37) and (505.82,111.44) .. (495.55,111.44) .. controls (485.28,111.44) and (481.75,102.37) .. (487.67,91.19) -- cycle ;
\draw [color={rgb, 255:red, 74; green, 102; blue, 226 }  ,draw opacity=1 ][line width=1.5]    (460.67,111.75) -- (481.25,102.16) ;
\draw [color={rgb, 255:red, 74; green, 102; blue, 226 }  ,draw opacity=1 ][line width=1.5]    (432.43,186.55) -- (447.1,187.6) ;
\draw [color={rgb, 255:red, 74; green, 102; blue, 226 }  ,draw opacity=1 ][line width=1.5]    (426.61,195.7) -- (440.1,197.6) ;
\draw [color={rgb, 255:red, 74; green, 102; blue, 226 }  ,draw opacity=1 ][line width=1.5]    (419.69,205.25) -- (433.1,206.6) ;
\draw    (316.04,161.38) .. controls (325.38,146.27) and (356.19,137.24) .. (389.8,150.16) ;
\draw    (316.04,161.38) .. controls (341.33,145.75) and (366.3,152.95) .. (375.59,165.74) ;
\draw    (375.59,165.74) -- (362.69,179.66) ;
\draw    (389.8,150.16) -- (399.76,139.24) ;
\draw    (399.76,139.24) -- (402.24,174.56) ;
\draw    (362.69,179.66) -- (402.24,174.56) ;
\draw    (323.82,92.73) -- (278.35,80.63) ;
\draw [shift={(276.42,80.11)}, rotate = 14.9] [color={rgb, 255:red, 0; green, 0; blue, 0 }  ][line width=0.75]    (10.93,-3.29) .. controls (6.95,-1.4) and (3.31,-0.3) .. (0,0) .. controls (3.31,0.3) and (6.95,1.4) .. (10.93,3.29)   ;
\draw    (332.6,105.34) -- (297,162.27) ;
\draw [shift={(295.94,163.96)}, rotate = 302.02] [color={rgb, 255:red, 0; green, 0; blue, 0 }  ][line width=0.75]    (10.93,-3.29) .. controls (6.95,-1.4) and (3.31,-0.3) .. (0,0) .. controls (3.31,0.3) and (6.95,1.4) .. (10.93,3.29)   ;
\draw    (150.38,45.6) -- (9.1,290.87) ;
\draw [shift={(8.1,292.6)}, rotate = 299.94] [color={rgb, 255:red, 0; green, 0; blue, 0 }  ][line width=0.75]    (10.93,-3.29) .. controls (6.95,-1.4) and (3.31,-0.3) .. (0,0) .. controls (3.31,0.3) and (6.95,1.4) .. (10.93,3.29)   ;
\draw    (150.38,45.6) -- (647.1,45.6) ;
\draw [shift={(649.1,45.6)}, rotate = 180] [color={rgb, 255:red, 0; green, 0; blue, 0 }  ][line width=0.75]    (10.93,-3.29) .. controls (6.95,-1.4) and (3.31,-0.3) .. (0,0) .. controls (3.31,0.3) and (6.95,1.4) .. (10.93,3.29)   ;
\draw    (150.38,45.6) -- (150.12,12.6) ;
\draw [shift={(150.1,10.6)}, rotate = 89.54] [color={rgb, 255:red, 0; green, 0; blue, 0 }  ][line width=0.75]    (10.93,-3.29) .. controls (6.95,-1.4) and (3.31,-0.3) .. (0,0) .. controls (3.31,0.3) and (6.95,1.4) .. (10.93,3.29)   ;
\draw [color={rgb, 255:red, 208; green, 2; blue, 27 }  ,draw opacity=0.31 ][line width=1.5]    (198.59,111.08) -- (198.31,76.08) ;
\draw [shift={(198.31,76.08)}, rotate = 269.54] [color={rgb, 255:red, 208; green, 2; blue, 27 }  ,draw opacity=0.31 ][fill={rgb, 255:red, 208; green, 2; blue, 27 }  ,fill opacity=0.31 ][line width=1.5]      (0, 0) circle [x radius= 4.36, y radius= 4.36]   ;
\draw    (128.1,140.6) -- (183.27,116.4) ;
\draw [shift={(185.1,115.6)}, rotate = 156.32] [color={rgb, 255:red, 0; green, 0; blue, 0 }  ][line width=0.75]    (10.93,-3.29) .. controls (6.95,-1.4) and (3.31,-0.3) .. (0,0) .. controls (3.31,0.3) and (6.95,1.4) .. (10.93,3.29)   ;
\draw    (125.1,159.6) -- (155.55,184.34) ;
\draw [shift={(157.1,185.6)}, rotate = 219.09] [color={rgb, 255:red, 0; green, 0; blue, 0 }  ][line width=0.75]    (10.93,-3.29) .. controls (6.95,-1.4) and (3.31,-0.3) .. (0,0) .. controls (3.31,0.3) and (6.95,1.4) .. (10.93,3.29)   ;
\draw [color={rgb, 255:red, 208; green, 2; blue, 27 }  ,draw opacity=1 ][fill={rgb, 255:red, 208; green, 2; blue, 27 }  ,fill opacity=0.24 ]   (175.4,177.28) .. controls (167.23,167.13) and (166.1,155.6) .. (163.1,159.6) .. controls (160.1,163.6) and (164.81,179.27) .. (164.33,185.58) .. controls (163.84,191.89) and (161.85,170.6) .. (154.1,170.6) .. controls (146.35,170.6) and (155.1,186.6) .. (153.59,211.44) ;

\draw (109,139.92) node [anchor=north west][inner sep=0.75pt]  [font=\footnotesize]  {$\nu _{\text{exc}}$};
\draw (332.82,85.09) node [anchor=north west][inner sep=0.75pt]  [font=\footnotesize]  {$\varrho _{\text{exc}}$};

\end{tikzpicture}
  \caption{Heuristic proof of existence of an optimal shape for problem~\eqref{problem.shape_optimization}. If a solution has an excess part, represented in the figure by a measure having a density along $\Sigma$ and a Dirac mass, it must be formed through projections onto $\Sigma$.  But then it is better to send the excess mass that is being projected to small segments in the direction of the projection.}\label{figure.sketch_existence}
\end{figure}
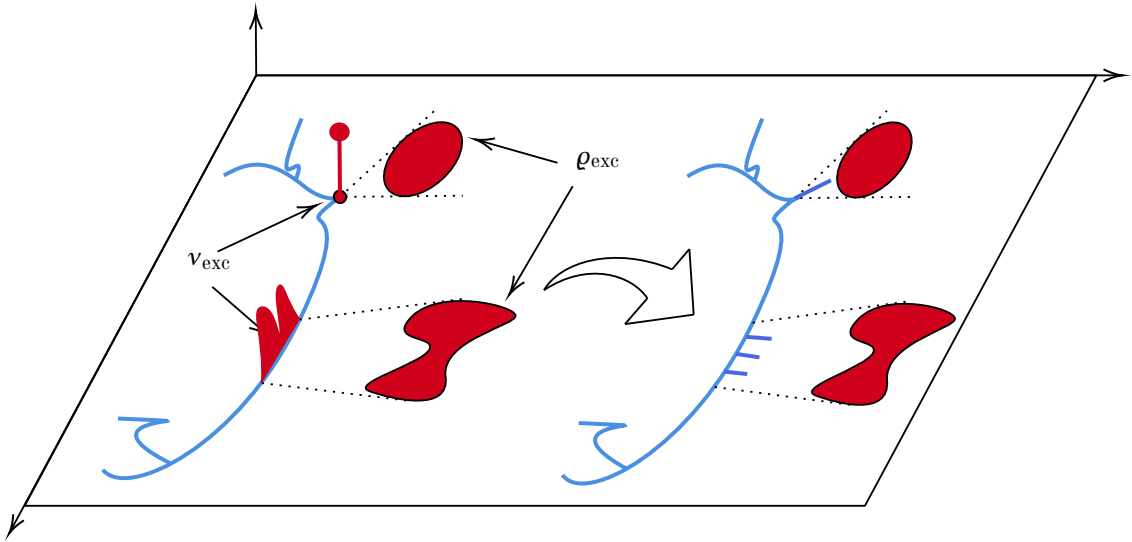

However, since we lack much information on the measure that is projected to form the excess, it is unclear \textit{a priori} how to select to which directions should point the segments that decrease the energy. For this reason, in~\cite{chambolle2025one} a \textit{concentration/blow-up} argument is developed, that yields a localized problem which inherits the projection property. In the blow-up limit, the optimal network $\Sigma$ is replaced by its \textit{approximate tangent space} $T_{y_0}\Sigma$ (see Section~\ref{sec.golab_length_tangent}) at a carefully chosen point $y_0$. This simplifies the construction of a better competitor since now all projection directions are orthogonal to $T_{y_0}\Sigma$.

In principle, the concentration/blow-up argument can be carried out for any structure that is formed via projections onto the optimal network. As a result, if we can prove that loops are formed through projections, one could also expect that optimal networks should not have them, with a similar heuristic from the question of existence. 

For this reason, the first result that we establish in this work is that if a loop exists, it must be formed via projections, hence one can localize around a carefully chosen point and ``open'' the loop, while adding a structure that reduces the cost of projecting onto $\Sigma$, see Figure~\ref{figure.sketch_noloops}. Once again, conducting this argument directly is not simple since we cannot control the direction of projection onto the loop, therefore we implement a variation of the \textit{concentration/blow-up argument} that is described in more detail in the sequel.  

\begin{figure}[h!tbp]
  \centering
  \input{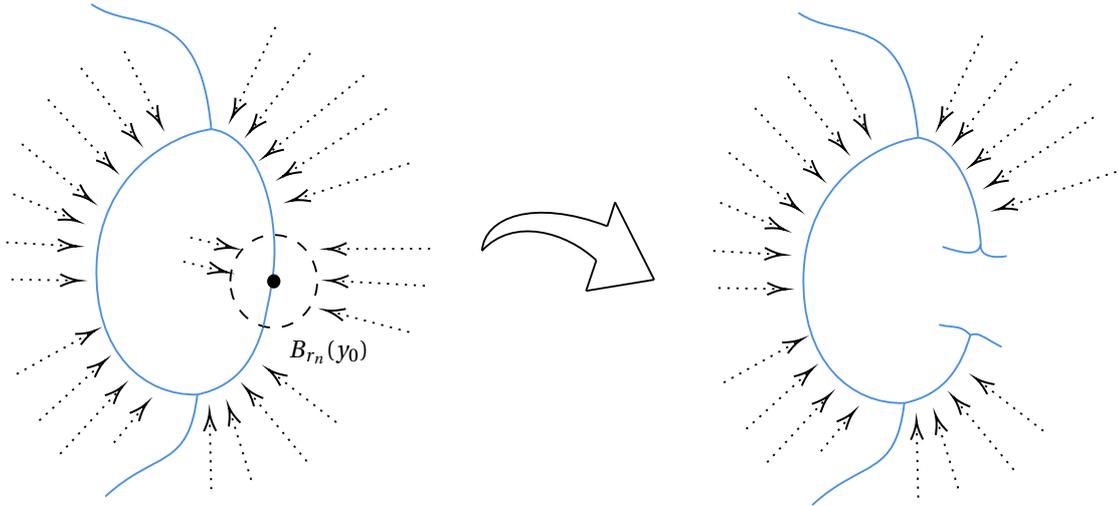}
  \caption{Argument for absence of loops for~\eqref{problem.shape_optimization_relaxed}. As in the proof of existence, we begin by showing that loops are formed through projections and later use this information to construct a better competitor.}\label{figure.sketch_noloops}
\end{figure}

\subsection{Contributions and the concentration/blow-up argument}\label{subsection.localization_blowup_arg}
As previously stated, in this work we show that the support of  minimizers of~\eqref{problem.shape_optimization_relaxed} are trees in three cases
\begin{enumerate}
  \item[\underline{Case 1:}] if $\varrho_0$ is a convex combination of Dirac masses, \textit{i.e.}
  \[
      \varrho_0 = \mu_N = \sum_{i=1}^N a_i\delta_{x_i}, \text{ for $0 \le ai$ and }
      \sum_{i=1}^N a_i = 1. 
  \]
  \item[\underline{Case 2:}] $\varrho_0$ is absolutely continuous w.r.t.~the Lebesgue measure with compact support and bounded density, \textit{i.e.} $\varrho_0 \in L^\infty(\mathbb{R}^d)$.
  \item[\underline{Case 3:}]{  $\varrho_0$ is  of the form 
  \[
    \varrho_0 = \varrho + \sum_{i=1}^N a_i\delta_{x_i}, \text{ for $0 \le ai$ and }
      \sum_{i=1}^N a_i < 1,
  \]
  and $\varrho \in L^\infty(\mathbb{R}^d)$ with compact support.}
\end{enumerate}

{  
We then prove the following result. 
\begin{theorem}\label{theorem.intro}
    Assume that $\varrho_0$ is an atomic measure, as in Case 1. Then the support $\Sigma$ of any solution to the relaxed problem~\eqref{problem.shape_optimization} is a tree, in the sense that it does not contain homeomorphic images of $\mathbb{S}^1$.
    
    In the Case 2, that is $\varrho_0 \in L^\infty(\Omega)$ is given by a bounded density with compact support, assume that either $d\ge 3$ and $p \ge 1$, or that $d = 2$ and $p>1$. Then the original problem~\eqref{problem.shape_optimization} admits solutions $\Sigma$, and any such optimal network is a tree.

    In Case 3, where $\varrho_0$ is a mixture between a bounded density and an atomic measure, if either $d\ge 3$ and $p \ge 1$ or if $d = 2$ and $p>1$, the support of any solution to the relaxed problem is a tree. 
\end{theorem}
}

To prove Theorem~\ref{theorem.intro} we apply the concentration/blow-up argument, also used in~\cite{chambolle2025one} for the existence of optimal networks to~\eqref{problem.shape_optimization}. More generally, if the connectedness constraints can be preserved, it could be used to rule out the appearance of any structure that is formed through projections. Hopefully this strategy of proof could be useful in other contexts, so in the sequel we go through each step. 

\begin{enumerate}
  \item \textbf{\underline{Identify a structure that is formed through projections}:} In the first step one proves that the structure one wishes to exclude is formed via projections of the initial measure $\varrho_0$ using the optimal transport problem in the energy from~\eqref{problem.shape_optimization_relaxed}. More precisely for a minimizer $\nu_\star$ with support $\Sigma$, {   this means identifying some $\Gamma \subset \Sigma$ for which, if $\gamma$ is an optimal transportation plan from $\varrho_0$ to $\nu_\star$, then 
  \[
        |x-y| = \dist(x,\Sigma) \text{ for $\gamma$-a.e. $(x,y)$.}
  \]}
  Such structures can be loops or the excess measure, mentioned above for the proof of existence from~\cite{chambolle2025one}. For more details on what we mean by a loop being formed via projections, see the thesis of Proposition~\ref{prop.projection_to_loops}.  
  \item \textbf{\underline{Chose a point $y_0$ with good properties to localize}:} The next step is to select a point $y_0$ from this structure (inside the loop, or on the support of the excess measure) for which we can make variations, for instance such that the approximate tangent space $T_{y_0}\Sigma$ exists, and that is a non-cut point for the absence of loops, allowing to remove a neighborhood of it without breaking the connectedness.
  \item \textbf{\underline{Define localized problems and show they $\Gamma$-converge}:} 
  
  With the previous step, we obtain a natural family of functionals ${\left(F_n\right)}_{n \in \mathbb{N}}$, which is minimized by a family of localizations of the optimal measures to the original problem, supported over a ball $B_{r_n}(y_0)$. Since the mass of such localizations vanishes as $r_n \to 0$, we rescale them with a factor $r_n^{-1}$, bringing them back to the unitary ball $B_1$. The passage to the limit as $r_n \to 0$ of such sequence of rescaled localizations is called a \textit{blow-up}. The issue is that in this process, we must also rescale the mass of a portion of the original measure. Since the rescaled versions of the original measure might lose mass at infinity, we first concentrate it inside a ball whose radius is of order $r$. This \textit{concentration step} preserves the total mass of the limit problem which consists of minimizing a limit functional $F$ of the sequence $F_n$ in the sense of $\Gamma$-convergence. 
  
  In the proof of existence, it is necessary that the variations satisfy the density penalization introduced by the length functional~\eqref{eq.length_functional}. In the case of the absence of loops, we must be careful with the connectedness constraint, hence the ball $B_{r_n}(y_0)$ should be chosen so that $\Sigma\setminus B_{r_n}(y_0)$ remains connected.

  \item \textbf{\underline{Show that the projection property passes to the limit}:} In this step, we use the fundamental property of $\Gamma$ convergence, so that the sequence of localizations that minimize the functionals $F_n$ converge to a minimizer of the limit $F$. In addition, we also verify that the projection property proved in step (1) also holds in the limit problem, so that this minimizer of $F$ is also formed via projections, but this time onto the approximate tangent space $T_{y_0}\Sigma$.
  
  \item \textbf{\underline{Construct a better competitor for $F$}:} Finally, we exploit the projection property of the limit to construct a strictly better competitor for the minimization of $F$. This contradicts the entire construction, and in particular contradicts the existence of the structure from step (1).
\end{enumerate}

This argument is reminiscent of an approach from Santambrogio and Tilli in~\cite{santambrogio2005blow} used to fully characterize the blow-ups of any point from optimal networks for the \textit{average distance functional}, see~\cite{lemenant2012presentation}. In their work, a crucial ingredient was the full topological characterization of such optimal networks done in $\mathbb{R}^2$ since the introduction of the problem by Butazzo and Stepanov in~\cite{buttazzo2003optimal}, where it was proven that optimizers are trees with finitely many branching points, each one being triple junctions of 120 degrees. 

This result has recently been generalized to $\mathbb{R}^d$ in~\cite{obrien2025structure}. Their approach consists of defining a vector field, the \textit{barycenter field}, which measures from which direction the mass is on average being projected onto the network. This allows them to develop a local improvement theory of the average distance problem. Adapting these techniques to the Wasserstein-$\mathscr{H}^1$ problem might be an interesting direction of investigation, which can hopefully shed some light onto other topological properties of minimizers for our problem. 

\subsection{Structure of this manuscript}

In Section~\ref{section.preliminaries} we make a brief review of the basic facts of optimal transport and geometric measure theory, which shall be useful for our analysis. A particular emphasis is given to Section~\ref{sec.preliminaries_loops}, where we study a slight refinement of a classical lemma used to prove absence of loops in problems such as the Steiner or the average distance problems. 

This refinement might not be surprising to seasoned experts on the field, but is particularly relevant to the implementation of the concentration/blow-up argument, which is done in Section~\ref{sec.absense_loops} and culminates at Theorem~\ref{theorem.no_loops_integrable_regime} where we obtain the desired absence of loops. Some proofs therein are postponed to Appendix~\ref{appendix}, since they are only minor variations of the proofs from~\cite{chambolle2025one}. 

\subsection*{Acknowledgments}
The author thanks Antonin Chambolle, Vincent Duval and Forest Kobayashi for many discussions which lead to an improved version of the present paper. This work was also greatly improved due to the detailled comments of the annonimous referrees, to whom the authour is greatly thankful. The authour aussi wishes to acknowkedge the support of the Lagrange Mathematics and Computing Research Center.

\section{Preliminaries}\label{section.preliminaries}
In this section we recall the notions of convergence of sets and measures required in this article as well as the tools from geometric measure theory that will be employed. Most of the results presented here are well known and are recalled for the sake of readability, as well as to establish notation. Therefore, more experienced readers may want to skip this, except maybe for Lemma~\ref{lemma.noncut_property} from subsection~\ref{sec.preliminaries_loops}, which is a small refinement of a result frequently used in the literature to prove absence of loops in 1-dimensional shape optimization problems, see for instance~\cite[Lemma 6.1]{buttazzo2003optimal}. The usual result says that around every non-cut point one can remove a connected set with diameter as small as we want and still keep the connectedness of the network. This improvement says that such sets can be taken to be the intersection of the network and balls of arbitrarily small radius around the non-cut point, which is very convenient to perform the concentration/blow-up argument in the sequel. 

\subsection{Convergence of sets and measures}
To formulate variational problems on the space of connected sets, it is essential to equip this space with a topology that preserves connectedness and finite length. For this, \textit{Hausdorff} and \textit{Kuratowski} convergences are introduced, as detailed in~\cite{rockafellar2009variational}. These convergences are shown to maintain the desired properties when restricted to connected sets with bounded length.

\begin{definition}\label{definition.Hausdoff_Kuratowski_convergence}
	Let ${\left(A_n\right)}_{n \in \mathbb{N}}$ be a sequence of closed sets of $\mathbb{R}^d$. If $A \subset \mathbb{R}^d$ is closed, we say that 
	\begin{itemize}
		\item $A_n$ converges in the Hausdorff sense to $A$ if $d_H(A_n, A) \xrightarrow[n \to \infty]{} 0$, where $d_H$ is called the Hausdorff distance and is defined as 
		\begin{equation}
			\label{hausdord.distance}
			d_H(A,B) \eqdef \max\left\{
				\sup_{a \in A} \dist(a, B), \sup_{b \in B} \dist(b, A)
			\right\}, 
			\text{ we write $A_n \xrightarrow[n \to \infty]{d_H} A$. }
		\end{equation}
		\item A sequence of closed sets $C_n$ converges in the sense of Kuratowski to $C$, and we write $C_n \xrightarrow[n \to \infty]{K} C$, when
		\begin{enumerate}
			\item for all sequences $x_n \in C_n$, all its cluster points are contained in $C$.
			\item For all points $x\in C$ there exists a sequence $x_n \in C_n$, converging to $x$.
		\end{enumerate}
	\end{itemize}
\end{definition}

Furthermore, $A_n \xrightarrow[n \to \infty]{d_H} A$ if and only if $\dist(\cdot, A_n)\xrightarrow[n \to \infty]{} \dist(\cdot, A)$ uniformly. Similarly, Kuratowski convergence corresponds to the agreement of inner and outer limits: 
\begin{align*}
	\liminf_{n \to \infty} C_n &\eqdef 
    \left\{
		x \in \mathbb{R}^d : \limsup_{n \to \infty} \dist(x, C_n) = 0
	\right\}, 
	\\ 
	\limsup_{n \to \infty} C_n &\eqdef 
    \left\{
		x \in \mathbb{R}^d : \liminf_{n \to \infty} \dist(x, C_n) = 0
	\right\},
\end{align*}
in other words Kuratowski convergence holds if and only if $\dist(\cdot, C_n) \to \dist(\cdot, C)$ pointwise. Since the distance functions are 1-Lipschitz, by Ascoli-Arzelà’s Theorem we have that
\[ 
	\text{ 
	$C_n \xrightarrow[n \to \infty]{K} C$ if and only if $\dist(\cdot, C_n) \xrightarrow[n \to \infty]{} \dist(\cdot, C)$ locally uniformly.}
\]
As a result, Hausdorff convergence implies Kuratowski convergence, and both notions coincide on compact sets. Importantly, \textit{Blaschke's Theorem}, see~\cite[Theorem~6.1]{ambrosio2000functions}, states that the Hausdorff topology inherits compactness from the compactness of uniform convergence of the distance functions.

\subsection{Narrow convergence of probability measures and the Wasserstein distances}
Due to Riesz' representation theorem the set of Radon measures $\mathscr{M}(\mathbb{R}^d)$ is known to be the topological dual of the continuous functions. As a result, it is frequently endowed with the \textit{local weak-$\star$} convergence: a sequence ${\left(\mu_n\right)}_{n \in \mathbb{N}}$ is said to converge narrowly to $\mu$ in $\mathscr{M}_{loc}(\mathbb{R}^d)$~\cite[Def.~1.58]{ambrosio2000functions} if  
\[
    \int_{\mathbb{R}^d}\phi \dd \mu_n
    \cvstrong{n \to \infty}{}
    \int_{\mathbb{R}^d}\phi \dd \mu
    \text{ for all 
    $\phi \in \mathscr{C}_c(\mathbb{R}^d)$.
    }
\]
This notion of convergence however does not preserve the total mass of the sequence ${\left(\mu_n\right)}_{n \in \mathbb{N}}$, as a portion of the mass can be lost at infinity.  This is one of the difficulties in implementing Step (4) of the concentration/blow-up argument, see the discussion before Lemma~\ref{lemma.localization}. 

For this reason, when working with Radon probability measures it is customary to work with the \textit{narrow topology}, defined by replacing the space of continuous functions with compact support $\mathscr{C}_c(\mathbb{R}^d)$ by the class of continuous and bounded functions $\mathscr{C}_b(\mathbb{R}^d)$. Naturally, if the supports of a convergent sequence ${\left(\mu_n\right)}_{n \in \mathbb{N}}$ are all contained in the same compact subset of $\mathbb{R}^d$, then both notions of convergence coincide and the mass is preserved even under the weak-$\star$ convergence. This will be the case most times in this work, unless when we deal with blow-ups of sets and measures, when it is inevitable to send the support of the measures to infinity. 

Nonetheless, the narrow topology is actually metrizable and a possible choice of distance for this topology are the so called \textit{$p$-Wasserstein distances}\footnote{To be more precise, convergence with respect to the $p$-Wasserstein distance is equivalent to narrow convergence plus convergence of the $p$-moments, but the second condition is trivial in compact domains, which will be always the case where this is exploited in this paper.} defined via the value of an optimal transportation problem(see~\cite{ambrosio2021lectures,santambrogio2015optimal,villani2009optimal} for more details) as follows: given $\mu,\nu \in \mathscr{P}(\mathbb{R}^d)$ with finite $p$-moments, $p\ge 1$, the $p$-Wasserstein distance is defined as 
\[
    W_p^p(\mu,\nu) 
    \eqdef 
    \min_{\gamma \in \Pi(\mu,\nu)} 
    \int_{\mathbb{R}^d\times \mathbb{R}^d} 
    |x-y|^p\dd \gamma(x,y),
\]
where $\displaystyle \Pi(\mu,\nu) \eqdef 
\left\{
    \gamma \in \mathscr{P}(\mathbb{R}^d\times \mathbb{R}^d):
    {(\pi_0)}_\sharp \gamma = \mu, \ {(\pi_1)}_\sharp \gamma = \nu
\right\}$ corresponds to the couplings with marginals $\mu$ and $\nu$. This corresponds to Kantorovitch's formulation of the problem, which is known under certain conditions to actually be a solution to Monge's problem 
\[
    \inf_{T_\sharp\mu = \nu} 
    \int_{\mathbb{R}^d} 
    |x - T(x)|^p\dd \mu(x),
\]
where the \textit{pushforward measure} is $T_\sharp \mu (A) \eqdef \mu(T^{-1}(A))$, for any Borel set $A\subset \mathbb{R}^d$. The connection between both formulations is give by Brenier's Theorem which states that whenever $\mu$ does not give mass to $(d-1)$-dimensional sets, there is a unique optimal transportation plan that is actually induced by a map, it can be written as $\gamma = {(\id, T)}_\sharp \mu$. 

\subsection{\Golab's Theorem, the length functional, blow-ups and approximate tangent spaces}\label{sec.golab_length_tangent}
In the sequel, we consider a sequence of compact and connected sets ${\left(\Sigma_n\right)}_{n \in \mathbb{N}}$ converging to $\Sigma$ in the sense of Kuratowski. We are mostly interested in the sequence of measures $\H^1\mres \Sigma_n$, up to subsequences, we can always assume it to converge weakly to a measure $\mu$. The classical version of \Golab's Theorem says that $\mu \ge \H^1\mres \Sigma$, while in~\cite{chambolle2025one}, this result is proved under the weaker Kuratowski convergence and the sequence $\Sigma_n$ doesn't have to be bounded, in fact it can have infinite length, as long as it is locally finite. 
\begin{theorem}[Density version of \Golab's Theorem]\label{theorem.Golab_localversion}
	Let ${(\Sigma_n)}_{n \in \mathbb{N}}$ be a sequence of closed and connected subsets of $\mathbb{R}^d$ converging in the sense of Kuratowski to some closed set $\Sigma$ and having locally uniform finite length, {\em i.e.} for all $R > 0$
	\[
		\sup_{n \in \mathbb{N}} \H^1(\Sigma_n \cap B_R) < +\infty.
	\]
	Define the measures $\mu_n \eqdef \H^1\mres \Sigma_n$, and let $\mu$ be a weak-$\star$ cluster point of this sequence. Then supp$\mu \subset \Sigma$ and it holds that 
	\[
		\mu \ge \H^1\mres \Sigma,
	\]
	in the sense of measures. 
\end{theorem}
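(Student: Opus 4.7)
The two conclusions should be handled separately. The support containment is topological and falls out of Kuratowski convergence, while the density bound is quantitative and genuinely uses the connectedness of each $\Sigma_n$.

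For $\supp\mu \subset \Sigma$, I would show that any $x_0 \notin \Sigma$ lies outside $\supp\mu$. Since $\Sigma$ is closed there is $r > 0$ with $\overline{B_r(x_0)} \cap \Sigma = \emptyset$, i.e.\ $\dist(\cdot, \Sigma) \geq r/2$ on $\overline{B_{r/2}(x_0)}$. Kuratowski convergence is equivalent to locally uniform convergence of the distance functions, so $\dist(\cdot, \Sigma_n) \geq r/4$ on $\overline{B_{r/2}(x_0)}$ for $n$ large, whence $\mu_n(B_{r/2}(x_0)) = 0$ eventually. The local weak-$\star$ convergence of $\mu_n$, guaranteed by the uniform bound $\sup_n \H^1(\Sigma_n \cap B_R) < \infty$, transfers this vanishing to $\mu$.

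The density bound $\mu \geq \H^1\mres\Sigma$ I would approach via Besicovitch differentiation. A connected set of locally finite $\H^1$-measure is $\H^1$-rectifiable, so at $\H^1$-a.e.\ $y \in \Sigma$ an approximate tangent $T_y\Sigma$ exists and $\Theta^1(\H^1\mres\Sigma,y)=1$. The inequality then reduces to
\[
    \liminf_{r \to 0^+} \frac{\mu(\overline{B_r(y)})}{2r} \geq 1 \quad \text{for $\H^1$-a.e.\ } y \in \Sigma.
\]
Upper semicontinuity of the weak-$\star$ limit on compact sets yields $\mu(\overline{B_r(y)}) \geq \limsup_n \H^1(\Sigma_n \cap \overline{B_r(y)})$, so the task is to bound $\H^1(\Sigma_n \cap \overline{B_r(y)})$ from below by $2r(1-o(1))$ as $n \to \infty$. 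Kuratowski convergence supplies $y_n \in \Sigma_n$ with $y_n \to y$, and the approximate tangent structure supplies points $z_r^\pm \in \Sigma \cap \overline{B_r(y)}$ close to $y \pm r\tau$ for some unit $\tau \in T_y\Sigma$, which can again be Kuratowski-approximated by points $z_r^{n,\pm} \in \Sigma_n$. Connectedness of $\Sigma_n$ then forces a subcontinuum passing through $z_r^{n,-}$, $y_n$, and $z_r^{n,+}$, whose diameter is close to $2r$ and whose $\H^1$-measure is accordingly at least $\approx 2r$ by the continuum-diameter inequality.

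The main obstacle is that this subcontinuum could leave $\overline{B_r(y)}$ and re-enter, making the portion \emph{inside} the ball shorter than $2r$ and costing the crucial factor of $2$. I would circumvent this by passing to a slightly enlarged ball $\overline{B_{(1+\delta)r}(y)}$, extracting via Blaschke's theorem a Hausdorff-convergent subsequence of the connected components of $\Sigma_n \cap \overline{B_{(1+\delta)r}(y)}$, and applying the classical bounded-Hausdorff version of \Golab's theorem to each component that persists in the limit. The rectifiability of $\Sigma$ guarantees that at $\H^1$-a.e.\ $y$ both tangential directions are represented in persistent components (equivalently, $\H^1$-a.e.\ point of $\Sigma$ is a non-endpoint), restoring the factor $2$. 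Sending $\delta \downarrow 0$ along radii where $\mu(\partial B_{(1+\delta)r}(y)) = 0$ then yields the required density lower bound and hence $\mu \geq \H^1\mres\Sigma$.
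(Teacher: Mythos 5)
The paper itself does not prove Theorem~\ref{theorem.Golab_localversion}; it imports it verbatim from~\cite{chambolle2025one}, so a direct comparison with a proof ``in the paper'' is impossible. I can only assess the sketch on its own terms.

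Your decomposition is the standard one and the first half is fine: $\supp\mu\subset\Sigma$ follows exactly as you say, since Kuratowski convergence is equivalent to local uniform convergence of $\dist(\cdot,\Sigma_n)$, so balls away from $\Sigma$ are eventually empty of $\Sigma_n$ and the vanishing passes to the weak-$\star$ limit. The reduction of the density bound to $\liminf_{r\to 0^+}\mu(\overline{B_r(y)})/(2r)\ge 1$ at $\H^1$-a.e.\ $y\in\Sigma$ (via Besicovitch differentiation of $\mu$ against the Radon measure $\H^1\mres\Sigma$, using that $\Theta^1(\H^1\mres\Sigma,y)=1$ a.e.) is also correct, as is the upper semicontinuity $\mu(\overline{B_r(y)})\geq\limsup_n \H^1(\Sigma_n\cap\overline{B_r(y)})$.

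The genuine gap is in your final step, and it is exactly the spot you flag as the ``main obstacle.'' Your fix — pass to $\overline{B_{(1+\delta)r}(y)}$, take Blaschke limits of connected components of $\Sigma_n\cap\overline{B_{(1+\delta)r}(y)}$, apply classical \Golab\ to the persistent ones — does not by itself restore the factor $2$. The component containing a point $y_n\to y$ reaches $\partial B_{(1+\delta)r}$ by boundary bumping, but that only yields $\H^1\gtrsim(1+\delta)r$, not $2r$. The missing length $\approx r$ must come from other components, and nothing in the sketch forces them to carry it: the component meeting $z_r^{n,+}$ (resp.\ $z_r^{n,-}$) may exit $\partial B_{(1+\delta)r}$ on the \emph{same} side of $y$ and so have length $O(\delta r)$, the components change identity with $n$ so ``Blaschke a subsequence of components'' needs a diagonal argument, and there can be infinitely many of them so the ``persistent'' bookkeeping and the exchange of $\liminf$ with the sum over components both need care. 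The phrase ``both tangential directions are represented in persistent components'' is the whole content of the factor $2$, and it is asserted rather than proved. What actually closes the gap is a quantitative slicing statement: since every point of the segment $y+[-r,r]\tau$ is within $o(r)$ of $\Sigma_n$ and $\Sigma_n\cap\overline{B_{(1+\delta)r}}$ lies in an $o(r)$-tube about that segment, the images of the components under $g(x)=\inner{x-y,\tau}$ are intervals each containing $\pm(1+\delta)r+o(r)$ and jointly covering $[-r+o(r),r-o(r)]$, whence their lengths sum to $2r-o(r)$; the coarea inequality then transfers this to $\H^1(\Sigma_n\cap\overline{B_{(1+\delta)r}})$. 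That is the combinatorial core of the argument and it is absent from your proposal.
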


This result is central to understand the \textit{length functional} described in the introduction. Consider the functional defined over the space of probability measures as 
\begin{equation}
    \ell(\nu) 
    \eqdef 
    \begin{cases}
        \mathscr{H}^1(\Sigma),& 
        \displaystyle
        \text{ if } \nu = \frac{1}{\mathscr{H}^1(\Sigma)}\mathscr{H}^1\mres \Sigma, \text{ for $\Sigma$ connected},\\ 
        +\infty,& \text{ otherwise.} 
    \end{cases}
\end{equation} 
Using \Golab's Theorem, one can show that the lower semi-continuous relaxation of the above functional is given by the length functional 
\begin{equation}
    \mathcal{L}(\nu) 
    \eqdef 
    \begin{cases}
        \min
        \left\{
            \alpha \ge 0 : \alpha \nu \ge \mathscr{H}^1\mres \supp \nu
        \right\},& 
        \displaystyle
        \text{ if } \supp \nu  \text{ is connected},\\ 
        +\infty,& \text{ otherwise,} 
    \end{cases}
\end{equation}
which is used in the definition of the relaxed formulation~\eqref{problem.shape_optimization_relaxed} and allows for much more flexibility once creating competitors to optimizers and extract information from them, as for instance in the proof of Proposition~\ref{prop.projection_to_loops}. The challenge associated with this functional is that, as opposed with $\Sigma \mapsto \mathscr{H}^1(\Sigma)$ it has a non-local flavor. Indeed, if we want to \textit{reduce the value} of $\mathcal{L}(\nu)$ we must \textit{increase the $\mathscr{H}^1$ density} of $\nu$ along all of its support $\Sigma$, even if we just want to study the behavior of a small neighborhood of a point in $\Sigma$. This is particularly inconvenient when combined with an optimal transportation cost. On the other hand, adding any structure to $\Sigma$, with a smaller density will increase the value of $\mathcal{L}$.

\Golab 's Theorem is also useful to extract a finer information on the blow-ups of $1$-rectifiable connected sets. Due to a result from Besicovitch, we know that the connected sets $\Sigma$ with finite length that are of interest to us are actually countably $\H^1$-rectifiable~\cite{ambrosio2000functions,maggi2012sets}. In other words, up to $\H^1$-negligible sets they can be written as the countable union of Lipschitz images, that is there are Lipschitz functions $f_i:[0,1]\mapsto \mathbb{R}^d$ such that
\[
    \H^1\left(
        \Sigma \setminus \bigcup_{i \in \mathbb{N}} f_i([0,1]) 
    \right) = 0.
\]

As such, this class of sets enjoy tangentiability properties $\H^1$ almost everywhere, see for instance~\cite{delellis2006lecture,maggi2012sets}. In other words, we know from the so called \textit{blow-up Theorem} (\cite[Theorem~10.2]{maggi2012sets}) that for \textit{a.e.} $x\in\Sigma$, it holds that
\begin{equation}\label{eq.blowup_measure}
    \frac{1}{r}
    {\left(\Phi^{x,r}\right)}_\sharp 
    \H^1\mres \Sigma 
    =  
    \H^1\mres\left(
        \frac{\Sigma - x}{r} 
    \right)    
    \cvstar{r \to 0}
    \H^1\mres T_x\Sigma, 
    \text{ where }
    \Phi^{x,r} \eqdef \frac{\id - x}{r},
\end{equation}
and $T_x\Sigma$ is a one-dimensional subspace of $\mathbb{R}^d$, which is called the \textit{approximate tangent space of $\Sigma$ at $x$}. {  As a direct consequence, defining the $1$-density of a Radon measure $\mu$ as the quantity 
\begin{equation}\label{eq.1-density}
    \theta_1(\mu,x) 
    \eqdef 
    \lim_{r \to 0^+} 
    \frac{\mu(B_r(x))}{2r}
    \text{ and }
    \theta_1(\Sigma,x) = \theta_1(\mathscr{H}^1\mres \Sigma,x),
\end{equation}
for $\mathscr{H}^1$-a.e.~point $x$ of a $1$-rectifiable set $\Sigma$ it holds that $\theta_1(\Sigma,x)=1$. 
}

These results hold for general $\H^k$-rectifiable sets, but a particularity of the $1$-dimensional case is that we can use \Golab's Theorem to prove the convergence of blow-ups in the Hausdorff and Kuratowski topologies as well.
\begin{lemma}\label{lemma.blowup_domain_measure}
    Let $\Sigma \subset \mathbb{R}^d$ be closed and connected with $\H^1(\Sigma) < +\infty$, then for every $x\in \Sigma$ admitting an approximate tangent space $T_x\Sigma$ as in~\eqref{eq.blowup_measure}, and for all $R > 0$ it holds that
        \begin{equation}
            \displaystyle
            \frac{\Sigma - x}{r}\cap \overline{B_R(0)} \xrightarrow[r \to 0^+]{d_H} T_x\Sigma \cap \overline{B_R(0)},
        \end{equation}
        as well as global convergence holds in the Kuratowski sense 
        \[
            \frac{\Sigma - x}{r} \xrightarrow[r \to 0^+]{K} T_x\Sigma.
        \] 
        In addition, for every $r>$ it holds that
        \begin{equation}\label{eq.blowup_hausdorff_dist}
            d_H\left(
                \Sigma\cap B_r(x) -x, T_x\Sigma\cap B_r(0)
            \right)
            = 
            r 
            d_H\left(
                \frac{\Sigma - x}{r}\cap B_1,T_x\Sigma\cap B_1
            \right)
            =o(r).
        \end{equation}
\end{lemma}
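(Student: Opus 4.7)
The plan is to combine the measure-level tangent condition from~\eqref{eq.blowup_measure} with the density version of \Golab's theorem (Theorem~\ref{theorem.Golab_localversion}) to upgrade weak-$\star$ convergence of the rescaled measures into set-theoretic convergence of the supports. Writing $\Sigma_r \eqdef (\Sigma - x)/r$, the hypothesis reads $\H^1\mres \Sigma_r \cvstar{r \to 0^+} \H^1\mres T_x\Sigma$, and since $\partial B_R(0)$ has zero $\H^1\mres T_x\Sigma$-measure, Portmanteau gives $\H^1(\Sigma_r \cap \overline{B_R(0)}) \to 2R$ for every $R > 0$. In particular the family $(\Sigma_r)_{r>0}$ has locally uniformly finite length, which is exactly the integrability assumption of Theorem~\ref{theorem.Golab_localversion}.

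Given any sequence $r_n \to 0^+$, a diagonal Blaschke extraction produces a subsequence along which $\Sigma_{r_n}$ converges in the Kuratowski sense to some closed $\Sigma^\infty$. Since each $\Sigma_r$ is closed and connected, Theorem~\ref{theorem.Golab_localversion} then yields two pieces of information: $T_x\Sigma = \supp(\H^1\mres T_x\Sigma) \subset \Sigma^\infty$, and $\H^1\mres T_x\Sigma \geq \H^1\mres \Sigma^\infty$, whence $\H^1(\Sigma^\infty \setminus T_x\Sigma) = 0$.

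The main obstacle, and the step that uses the connectedness of $\Sigma$ in an essential way, is to upgrade this null-set statement into the set-theoretic inclusion $\Sigma^\infty \subset T_x\Sigma$, ruling out ``ghost'' parts of the limit invisible to the measure. I would argue by contradiction: if $y \in \Sigma^\infty$ with $\epsilon \eqdef \dist(y, T_x\Sigma) > 0$, I pick $y_n \in \Sigma_{r_n}$ with $y_n \to y$ and exploit arc-connectedness of closed connected sets with locally finite $\H^1$ to obtain an injective arc-length parametrized rectifiable arc $\gamma_n \subset \Sigma_{r_n}$ joining $0$ to $y_n$. The $1$-Lipschitz map $\psi_n(t) \eqdef \dist(\gamma_n(t), T_x\Sigma)$ satisfies $\psi_n(0) = 0$ and $\psi_n(L_n) \to \epsilon$, so for every $\delta \in (0, \epsilon/4)$ the area formula gives $|\{t : \delta \leq \psi_n(t) \leq \epsilon/2\}| \geq \epsilon/2 - \delta$, which via the arc-length parametrization translates to $\H^1(\gamma_n \cap \{\delta \leq \dist(\cdot, T_x\Sigma) \leq \epsilon/2\}) \geq \epsilon/2 - \delta$. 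Choosing $R$ large enough to contain this portion of the arc -- possible using the uniform length bound on $\Sigma_{r_n}\cap\overline{B_R(0)}$ together with the fact that it is trapped in the thin tube $\{\dist(\cdot, T_x\Sigma) \leq \epsilon/2\}$ starting at $0$ -- contradicts the weak-$\star$ upper-semicontinuity estimate
\[
	\limsup_{n \to \infty}\H^1\bigl(\Sigma_{r_n}\cap \overline{B_R(0)}\cap \{\dist(\cdot, T_x\Sigma) \geq \delta\}\bigr) \leq \H^1\bigl(T_x\Sigma\cap\overline{B_R(0)}\cap\{\dist(\cdot, T_x\Sigma) \geq \delta\}\bigr) = 0.
\]

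Once $\Sigma^\infty = T_x\Sigma$ is established for every Kuratowski cluster point, the uniqueness of the limit promotes the subsequential to the full Kuratowski convergence claimed. Hausdorff convergence on each $\overline{B_R(0)}$ follows by standard compactness: the inclusion $T_x\Sigma\cap \overline{B_R(0)} \subset N_\epsilon(\Sigma_r)$ is obtained by a finite subcover of the compact segment plus the $\liminf$ half of Kuratowski convergence, and the reverse inclusion $\Sigma_r\cap \overline{B_R(0)} \subset N_\epsilon(T_x\Sigma)$ by contradiction using the $\limsup$ half and the compactness of $\overline{B_R(0)}$. Identity~\eqref{eq.blowup_hausdorff_dist} is then a pure scaling computation: since $\Sigma \cap B_r(x) - x = r(\Sigma_r \cap B_1(0))$ and $T_x\Sigma \cap B_r(0) = r(T_x\Sigma \cap B_1(0))$, the Hausdorff distance scales by $r$, and the $o(r)$ rate is precisely the Hausdorff convergence at scale $R = 1$ established above.
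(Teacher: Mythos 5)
Your overall strategy mirrors the paper's: use \Golab's theorem plus the measure bound~\eqref{eq.blowup_measure} to show a Kuratowski cluster point $\Sigma^\infty$ contains $T_x\Sigma$ with $\H^1(\Sigma^\infty\setminus T_x\Sigma)=0$, then rule out ``ghost'' points of $\Sigma^\infty\setminus T_x\Sigma$ via connectedness, and finally derive Hausdorff convergence on balls and the scaling identity. The first, third and fourth of these steps are correct and match the paper. The gap is in the second step, specifically in your localization of the slab argument. You bound from below the length of the arc $\gamma_n$ inside the slab $\{\delta\le\dist(\cdot,T_x\Sigma)\le\epsilon/2\}$ — this part is fine — but you then claim you can ``choose $R$ large enough to contain this portion of the arc'' by observing it is ``trapped in the thin tube $\{\dist(\cdot,T_x\Sigma)\le\epsilon/2\}$ starting at $0$.'' That tube is unbounded in the $T_x\Sigma$-direction, and the arc from $0$ to $y_n$ has total length of order $\H^1(\Sigma)/r_n\to\infty$, so nothing prevents the sub-arc living in the slab from drifting arbitrarily far along the tangent line before $\psi_n$ ever reaches the level $\epsilon/2$. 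The locally uniform length bound on $\Sigma_{r_n}\cap\overline{B_R(0)}$ does not help here, because the sub-arc may simply not lie in $\overline{B_R(0)}$, and you have given no mechanism forcing it to.

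The fix is exactly the one the paper uses, and it is simpler than the slab argument: localize around the ghost point itself rather than around $0$. Pick a ball $B_s(y)$ with $\overline{B_s(y)}\cap T_x\Sigma=\emptyset$; since $y_n\to y$ and $0\in\Sigma_{r_n}\setminus B_s(y)$, connectedness of $\Sigma_{r_n}$ forces an arc from $y_n$ to $\partial B_s(y)$ inside $\Sigma_{r_n}$, hence $\H^1\bigl(\Sigma_{r_n}\cap\overline{B_s(y)}\bigr)\ge s-|y-y_n|\ge s/2$ for $n$ large, while Portmanteau for the weak-$\star$ convergence in~\eqref{eq.blowup_measure} forces $\limsup_n\H^1\bigl(\Sigma_{r_n}\cap\overline{B_s(y)}\bigr)\le\H^1\bigl(T_x\Sigma\cap\overline{B_s(y)}\bigr)=0$, a contradiction. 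This is the content of the paper's remark that a point of the limit not on $T_x\Sigma$ would force $\H^1(T\cap B_s(z))\ge s$, contradicting the density comparison obtained from \Golab. That ball-based localization automatically produces a fixed compact set and avoids the arc-length parametrization and Lipschitz machinery your slab version requires.
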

\begin{proof}
    First we take a rectifiability point $x \in \Sigma$ with tangent space $T_x\Sigma$, which we know to be $\H^1$ a.a. of $\Sigma$, so that~\eqref{eq.blowup_measure} holds. Let $T$ be the (Kuratowski) limit of a subsequence $\displaystyle \frac{\Sigma- x}{r_k}$.
    From~\eqref{eq.blowup_measure} we have that $T_x\Sigma\subset T$.
    Thanks
    to Theorem~\ref{theorem.Golab_localversion}, for
    almost all $R>0$ it holds that
    \begin{equation}
        \H^1(T\cap B_R(0)) \le \liminf_{k\to\infty} \H^1\left(\frac{\Sigma-y}{r_k}\cap B_R(0)\right)
        = \H^1(T_y\Sigma\cap B_R(0)),\label{eq.blowup_domain_measure}
    \end{equation}
    which shows that $T\Delta T_x\Sigma$ is $\H^1$-negligible.
    
    Notice that, if there is some $z \in T\setminus T_x\Sigma$, we may consider some ball $B_s(z)$ which does not intersect $T_x\Sigma$. Since $T$ is the limit of connected sets, $z$ must be path-connected in $T$ to some point in $(B_s(z))^c$, so that $\H^1(T\cap B_s(z))\ge s$. This contradicts~\eqref{eq.blowup_domain_measure}. Hence, $T=T_x\Sigma$, and is independent of the subsequence, and we deduce the localized Hausdorff and the Kuratowski convergences.

    To check~\eqref{eq.blowup_hausdorff_dist}, notice that from homogeneity of the distance in $\mathbb{R}^d$ it holds that
    \[
        \frac{d_H\left(
                (\Sigma -x) \cap B_r, T_x\Sigma\cap B_r
            \right)}{r}
            = 
            d_H\left(
                \frac{\Sigma - x}{r}\cap B_1,T_x\Sigma\cap B_1
            \right)    
    \]
    and the RHS converges to zero as $r\to 0$ from the previous reasoning.
\end{proof}

\subsection{Loops and tree structure}\label{sec.preliminaries_loops}
We finally arrive at the central objects of the present work, which are loops from a connected set of finite length, or rather the absence of them. We start by properly defining what we mean by a loop.
\begin{definition}\label{def.loop}
    We say that a set $\Gamma$ is a \textit{loop} whenever it is homeomorphic to $\mathbb{S}^1$. Any connected set $\Sigma$ which contains no loops it is said to be a \textit{tree}. 

    A point $x \in \Sigma$ is a \textit{non-cut point of $\Sigma$} if $\Sigma \setminus \{x\}$ remains connected. Otherwise, $x$ is called a \textit{cut point}.
\end{definition}

It turns out that $\H^1$ almost every point in a loop is a non-cut point. This is proved for instance in~\cite[Lemma 5.6]{paolini2013existence} when the ambient space is a general metric space. In the following Lemma, we exploit the geometric structure of $\mathbb{R}^d$ to prove this result, while obtaining more information in the process. 
\begin{lemma}\label{lemma.noncut_property}
    Let $\Sigma \subset \mathbb{R}^d$ be a closed connected set with $\H^1(\Sigma) < +\infty$, consisting of more than one point and containing a loop $\Gamma$. Then $\H^1$-a.e.~point $x \in \Gamma$ is such that for any $r>0$ small enough, there exists 
    $
        \bar r \in \left(\frac{r}{2}, r\right),
    $
    such that 
    $\Sigma \setminus B_{\bar r}(x)$ and $\Sigma \cap B_{\bar r}(x)$ are connected and 
    \[
        \H^0(\Sigma \cap \partial B_{\bar r}(x)) 
        =
        \H^0(\Gamma \cap \partial B_{\bar r}(x)) = 2. 
    \]
    In addition, it holds that $\H^1$-a.e.~point of $\Gamma$ is a non-cut point. 
\end{lemma}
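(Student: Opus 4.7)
The plan is to combine the rectifiable structure of the loop $\Gamma$, a one-dimensional coarea bound on the distance $y\mapsto|y-x|$, and the blow-up result of Lemma~\ref{lemma.blowup_domain_measure}. First, since $\Gamma$ is homeomorphic to $\mathbb{S}^1$ with finite length, it admits an injective Lipschitz arclength parametrization $\gamma:[0,L]\to\Gamma$ with $\gamma(0)=\gamma(L)$. I would take $x=\gamma(s_0)$ satisfying simultaneously, for $\H^1$-a.e.\ $x\in\Gamma$: $\gamma$ is differentiable at $s_0$ with $|\gamma'(s_0)|=1$; $x$ is an approximate tangent point of $\Sigma$ with density one and $T_x\Sigma=T_x\Gamma$; and the Hausdorff and Kuratowski blow-up convergences of Lemma~\ref{lemma.blowup_domain_measure} hold at $x$. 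These are all standard consequences of Rademacher's theorem applied to $\gamma$ and of the tangential theory for $1$-rectifiable sets.

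Next I would apply the coarea formula on $\Sigma$ to $f(y)=|y-x|$. Using that its tangential derivative is bounded by $1$ together with the density-one identity,
\begin{equation*}
\int_0^{R}\H^0(\Sigma\cap\partial B_t(x))\,\dd t \;\le\; \H^1(\Sigma\cap B_R(x)) \;=\; 2R+o(R)\quad\text{as }R\to 0.
\end{equation*}
Conversely, injectivity of $\gamma$ together with $|\gamma'(s_0)|=1$ shows, by the intermediate value theorem applied to $s\mapsto|\gamma(s)-x|$, that every sufficiently small $t>0$ is attained at least twice, so $\H^0(\Sigma\cap\partial B_t(x))\ge\H^0(\Gamma\cap\partial B_t(x))\ge 2$. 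The integer-valued excess $g(t):=\H^0(\Sigma\cap\partial B_t(x))-2\ge 0$ therefore satisfies $\int_0^R g\,\dd t=o(R)$, so for every small $r$ the ``bad'' set $\{t\in(r/2,r):g(t)\ge 1\}$ has measure $o(r)$. Any $\bar r\in(r/2,r)$ outside this bad set yields $\H^0(\Sigma\cap\partial B_{\bar r}(x))=2$, and both points necessarily belong to $\Gamma$ since $\Gamma\subset\Sigma$ already provides at least two.

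Fix such a $\bar r$ and let $\{y_1,y_2\}=\Sigma\cap\partial B_{\bar r}(x)$. The loop decomposes into an inner arc $\Gamma_\mathrm{in}$ through $x$ and an outer arc $\Gamma_\mathrm{out}$. For $\Sigma\setminus B_{\bar r}(x)$: using that any continuum of finite $\H^1$-measure is arcwise connected, every $z$ outside $B_{\bar r}(x)$ can be joined to $y_1$ by an arc in $\Sigma$, and each incursion of the arc into $B_{\bar r}(x)$ may be shortcut through $\Gamma_\mathrm{out}$ thanks to $\Sigma\cap\partial B_{\bar r}(x)=\{y_1,y_2\}$, producing a path entirely in $\Sigma\setminus B_{\bar r}(x)$. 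For $\Sigma\cap B_{\bar r}(x)$: any parasitic connected component $C$ distinct from the open inner arc would have its closure meet $\partial B_{\bar r}(x)$ (otherwise $C$ would be clopen in $\Sigma$, contradicting its connectedness), hence contain $y_1$ or $y_2$; the Hausdorff blow-up of Lemma~\ref{lemma.blowup_domain_measure} then places $C$ within $o(\bar r)$ of $T_x\Sigma$, while the density-one identity $\H^1(\Sigma\cap B_{\bar r}(x))=2\bar r+o(\bar r)$ combined with the fact that $\Gamma_\mathrm{in}$ alone already has length $2\bar r-o(\bar r)$ forces $C$ to have length $o(\bar r)$, allowing one to rule it out after further restricting $\bar r$ within the good coarea set to a subset of positive measure in $(r/2,r)$. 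The non-cut property of $x$ then follows by choosing a sequence $\bar r_n\downarrow 0$ of such radii, so $\Sigma\setminus\{x\}=\bigcup_n \Sigma\setminus B_{\bar r_n}(x)$ is an increasing union of connected sets and hence connected. \emph{The main obstacle} I foresee is precisely the last step: excluding parasitic components of $\Sigma\cap B_{\bar r}(x)$ tethered to $\Sigma$ only through $y_1$ or $y_2$. The Hausdorff blow-up alone controls their length but not their existence, and the key new ingredient beyond the classical non-cut lemma of~\cite[Lemma~6.1]{buttazzo2003optimal} is to combine the coarea freedom in selecting $\bar r$ with density one and the blow-up to exclude them as well.
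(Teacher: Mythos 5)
Your proposal follows essentially the same route as the paper: pick a rectifiability point $x$ of $\Gamma$ with $T_x\Sigma=T_x\Gamma$, use the Eilenberg/coarea inequality $\int_0^r\H^0(\Sigma\cap\partial B_t(x))\,\dd t\le\H^1(\Sigma\cap B_r(x))=2r+o(r)$ against the lower bound $\H^0(\Gamma\cap\partial B_t(x))\ge 2$ to conclude that the integer-valued excess vanishes on most radii in $(r/2,r)$, select $\bar r$ there, and then shortcut through $\Gamma\setminus B_{\bar r}(x)$ to show $\Sigma\setminus B_{\bar r}(x)$ is connected. Up to that point the two arguments match.

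Where you diverge from the paper, and where your proof has a genuine gap, is the connectedness of $\Sigma\cap B_{\bar r}(x)$. The paper's argument is a short topological separation: any component $\Gamma'$ of the open set $\Sigma\cap B_{\bar r}(x)$ disjoint from $\Gamma\cap B_{\bar r}(x)$ would, with its closure, avoid $\Sigma\cap\partial B_{\bar r}(x)=\{y_1,y_2\}$ (since these lie on $\Gamma$), hence $\Gamma'$ would be relatively clopen in $\Sigma$, contradicting connectedness. Your proposal instead tries to exclude such a $\Gamma'$ by estimating its length: you observe that the density-one identity forces $\H^1(\Gamma')=o(\bar r)$, and propose to ``rule it out after further restricting $\bar r$ within the good coarea set to a subset of positive measure''. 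This does not work as stated: bounding the length of a whisker by $o(\bar r)$ does not make it disappear, and shrinking $\bar r$ inside the good set gives no mechanism to dodge a short component tethered to $\Sigma$ through $y_1$ or $y_2$. You correctly identify this as the ``main obstacle'' but leave it unresolved, so the proposal, as written, is incomplete exactly at the new claim that distinguishes this lemma from the classical non-cut statement of~\cite[Lemma~6.1]{buttazzo2003optimal}.

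Two remarks that may help you close the gap. First, the paper's clopen argument is the right tool and you should adopt it; the measure-theoretic estimate is a red herring for this particular step. Second, the subtle case you worry about — a component of $\Sigma\cap B_{\bar r}(x)$ whose closure touches $y_1$ or $y_2$ — would force $y_1$ or $y_2$ to be a branching point of $\Sigma$, producing $\H^0(\Sigma\cap\partial B_s(x))\ge 3$ on a whole one-sided interval of radii $s$ near $\bar r$, which is incompatible with the bound $\int_0^r(\H^0(\Sigma\cap\partial B_s(x))-2)\,\dd s=o(r)$ provided $\bar r$ is selected with a bit of care (e.g.\ as a Lebesgue density point of the good set of radii). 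Making this quantitative is what is needed to turn both your proposal and the paper's terse remark into a fully airtight argument; your instinct to look for more freedom in the choice of $\bar r$ is sound, but the right quantity to exploit is the excess multiplicity on an interval, not the length of the parasitic piece.
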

\begin{proof}
    Let $\Gamma$ be a loop of $\Sigma$, from the locality property of approximate tangent spaces~\cite[Proposition~10.5]{maggi2012sets}, we know that $\H^1$-a.e.~point of $\Gamma$ admits an approximate tangent plane such that 
    \[
        T_x\Sigma = T_x\Gamma. 
    \] 
    Fix one such point $x$ where the approximate tangents w.r.t.~$\Sigma$ and $\Gamma$ coincide and let $\mathbb{R}\tau$ be the common tangent space. Given $r>0$, it holds from the area formula and the blow-up Theorem that
    \begin{equation}\label{eq.bound_multiplicities}
        \int_0^{r} \H^0(\partial B_s(x) \cap \Gamma) \dd s
        \le 
        \int_0^{r} \H^0(\partial B_s(x) \cap \Sigma) \dd s
        \le 
        \H^1( B_r(x) \cap \Sigma)
        =
        2r + o(r).
    \end{equation}
    In addition, from the Hausdorff convergence of the blow-ups from $\Sigma \cap B_{r}(x)$, Lemma~\ref{lemma.blowup_domain_measure}, we can assume for $n$ large enough that
    \[
        \Sigma \cap B_{r}(x) \subset 
        \left\{
            z : 
            \begin{array}{c}
                \left|\inner{z - x, \tau} \right| < r\\
                \left|\inner{z - x, \tau^\perp} \right| < \frac{r}{100}
            \end{array}
        \right\}.
    \] 

    Since $\displaystyle \frac{\Gamma - x}{r}$ is a curve converging to the segment $\mathbb{R}\tau$, it must cross all the surfaces 
    \[
        \partial 
        \left(
            B_s(0) \cap \left\{\pm \inner{z, \tau} > 0\right\}
        \right)   
        \quad 
        0 < s < r,     
    \]
    so that $2 \le \H^0(\Gamma \cap \partial B_s(x)) \le \H^0(\Sigma \cap \partial B_s(x))$. As a result, from~\eqref{eq.bound_multiplicities} we have that
    \[
        0 \le \frac{1}{r} 
        \int_0^{r} 
        \underbrace{\left(\H^0(\partial B_s(x) \cap \Gamma) - 2\right)}_{\ge 0} \dd s
        \le 
        \frac{o(r)}{r}.
    \]
    Hence, for $r$ small enough, we can find 
    \[
        \bar r \in \left(\frac{r}{2}, r\right)
        \text{ such that }
        \H^0(\Sigma \cap \partial B_{\bar r}(x)) 
        =
        \H^0(\Gamma \cap \partial B_{\bar r}(x)) = 2. 
    \]
    For such radius we have that $\partial B_{\bar r}(x) \cap \Sigma = \partial B_{\bar r}(x) \cap \Gamma = \{y_{1,n}, y_{2,n}\}$ and $\Gamma \setminus B_{\bar r}(x)$ is a path between $y_{1,n}$ and $y_{2,n}$. 

    It follows that both $\Sigma \cap B_{\bar r}(x)$ and $\Sigma \setminus B_{\bar r}(x)$ remain connected. Indeed, for the former, it suffices to notice that since $\H^0(\Gamma \cap B_{\bar r}(x)) = 2$, $\Gamma \cap B_{\bar r}(x)$ is homeomorphic to an arc of $\mathbb{S}^1$ and so it is connected, as continuous images of connected sets are connected. As a result, it must also hold that $\Sigma \cap B_{\bar r}(x)$ is connected since if it was not, there would be a connected component $\Gamma'$ that is disjoint fom $\Gamma \cap B_{\bar r}(x)$. But since $\Sigma \cap \partial B_{\bar r}(x) = \Gamma \cap \partial B_{\bar r}(x)$, $\Gamma'$ would also be disjoint from $\Sigma \setminus B_{\bar r}(x)$, contradicting the connectedness of $\Sigma$. 
        
    To prove the connectedness of $\Sigma \setminus B_{\bar r}(x)$, consider $z_1, z_2 \in \Sigma \setminus B_{\bar r}(x)$ and let $\gamma \subset \Sigma$ be a path between them. If $\gamma \subset \Sigma \setminus B_{\bar r}(x)$, there is nothing to prove, otherwise $\gamma$ must contain either $y_{1,n}$ $y_{2,n}$, or both. If it contains only one of them, $\gamma\setminus B_{\bar r}(x)$ remains connected. In the case that it contains both, we can create a new path $\gamma \cup \Gamma \setminus B_{\bar r}(x)$ that must be connected, contained in $\Sigma \setminus B_{\bar r}(x)$ and has the points $z_1,z_2$. It follows that $\Sigma \setminus B_{\bar r}(x)$ is connected.

    Let us show that $x$ is a non-cut point. Indeed, for any $y_1, y_2 \in \Sigma \setminus \{x\}$, use the previous construction to obtain a radius such that $\Sigma \setminus B_{r}(x)$ is connected and contains $y_1, y_2$. Therefore, we can find a path in $\Sigma \setminus \{x\}$ connecting them proving that $\{x\}$ is a non-cut point.  
\end{proof}

As previously mentioned, Lemma~\ref{lemma.noncut_property} is a slight improvement over~\cite[Lemma 6.1]{buttazzo2003optimal} that is particularly useful to the localization arguments, since the latter provides a neighborhood $D_n$ around a.e.~non-cut point, but we have no information on the blowup of this set, complicating the implementation of the concentration/blow-up argument. With the construction provided by Lemma~\ref{lemma.noncut_property}, the limits of blow-up sequences are directly obtained via Lemma~\ref{lemma.blowup_domain_measure}.

\section{Absense of loops}\label{sec.absense_loops}
In this section we fix $\nu_\star \in \mathscr{P}(\mathbb{R}^d)$, a minimizer of problem~\eqref{problem.shape_optimization_relaxed}, along with its support $\Sigma$ and set $\alpha \eqdef \mathcal{L}(\nu_\star)$. We seek to  perform the construction that will show that  $\Sigma$ is a tree. We recall the two cases described in Section~\ref{subsection.localization_blowup_arg} for which this will be shown:
\begin{enumerate}
    \item[\underline{Case 1:}] if $\varrho_0$ is a convex combination of Dirac masses, \textit{i.e.}
    \[
        \varrho_0 = \mu_N = \sum_{i=1}^N a_i\delta_{x_i}, \text{ for }
        \sum_{i=1}^N a_i = 1. 
    \]
    \item[\underline{Case 2:}] $\varrho_0$ is absolutely continuous w.r.t.~the Lebesgue measure with compact support and bounded density, \textit{i.e.} $\varrho_0 \in L^\infty(\mathbb{R}^d)$.
    \item[\underline{Case 3:}]{  $\varrho_0$ is  of the form 
    \[
        \varrho_0 = \varrho + \sum_{i=1}^N a_i\delta_{x_i}, \text{ for $0 \le ai$ and }
        \sum_{i=1}^N a_i < 1,
    \]
    and $\varrho \in L^\infty(\mathbb{R}^d)$ with compact support.}
\end{enumerate}{ 
In both cases we can characterize the solutions to the relaxed problem~\eqref{problem.shape_optimization_relaxed} with the general result from~\cite[Theorem~6.4]{chambolle2025one}, which states that the solution is of the form
\begin{equation}\label{eq.chara.solution}
    \nu_\star = 
    \alpha^{-1}\mathscr{H}^1\mres \Sigma + 
    \varrho_{\text{exc}}\mres \Sigma,
\end{equation}
where $\alpha = \mathcal{L}(\nu_\star)$, and $\varrho_{\text{exc}}$ satisfies $\varrho_{\text{exc}} \le \varrho_0$ in the sense of measures. In Case 2, where $\varrho_0 \ll \mathscr{L}^d$, the second term above vanishes since $\Sigma$ has null Lebesgue measure. In Case 1, the solution might have atoms, but these are necessarily the same as $\varrho_0$, meaning a solution $\nu_\star$ is of the form
\begin{equation}\label{eq.characterization_atomic_source}
    \nu_\star 
    = 
    \alpha^{-1}\mathscr{H}^1\mres \Sigma
    + 
    \sum_{i = 1}^N b_i \delta_{x_i},
\end{equation}
where $0 \le b_i \le a_i$ for each $i = 1,\cdots,N$. 
}

In the course of the proof we will need to transport a part of the measure $\varrho_0$ with an arbitrary measurable selection of the projection 
operator
\begin{equation}\label{eq.projection_multimap}
    \Pi_\Sigma(x) = \argmin_{y \in \Sigma}\frac{1}{2}|x - y|^2.
\end{equation}  
Therefore, we assume that
\begin{equation}\label{eq.existence_measurable_selection}
    \text{ there is a measurable selection }
    \pi_\Sigma 
    \text{ of~\eqref{eq.projection_multimap} $\varrho_0$-a.e.~uniquely defined.}
\end{equation} 
This holds in 
\begin{itemize}
    \item Case 1, since for each $i$ we can choose $y_i \in \displaystyle \argmin_{\Sigma}|x_i - y|^2$ and define $\pi_\Sigma(x_i) \eqdef y_i$;
    \item Case 2, since the projection map is Lebesgue-a.e.~uniquely-defined.
    \item {  Case 3, by combining the two previous cases, namely assign values for the atoms and complete the rest with the Lebesgue-a.e.~uniquely-defined projection map. 
    }
\end{itemize}

\subsection{Loops are formed though projections}\label{subsec.loopsRprojections} 
In this paragraph we implement Step (1) of the concentration/blow-up argument described in Section~\ref{subsection.localization_blowup_arg} by showing that loops are formed through projections onto the optimal network, that is any optimal transportation plan $\gamma$ between $\varrho_0$ and a solution $\nu_\star$, whose support might contain a loop, satisfy the projection condition~\eqref{eq.formed_projections} below. 
\begin{proposition}\label{prop.projection_to_loops}
    Suppose that $\varrho_0$ has a compact support and that~\eqref{eq.existence_measurable_selection} holds. Let $\nu_\star$ be a minimizer of~\eqref{problem.shape_optimization_relaxed}. If $\gamma$ is an optimal transportation plan between $\varrho_0$ and $\nu_\star$ and $\Gamma \subset \Sigma$ is a loop, we then have that
    \begin{equation}\label{eq.formed_projections}
        |x-y| = \dist(x,\Sigma) \text{ for $\gamma$-a.e.~$(x,y) \in {\mathbb{R}}^d\times \Gamma$}. 
    \end{equation}
\end{proposition}
\begin{proof}
    {  
    Following the analysis of~\cite[Lemma~5.1 and Thm~6.4]{chambolle2025one}, we can show that we can decompose the original measure $\varrho_0$ and any solution $\nu_\star$ as 
    \[
       \varrho_0 = \varrho_{\mathscr{H}^1} + \varrho_{\text{exc}}, \
       \nu_\star = \nu_{\mathscr{H}^1} + \nu_{\text{exc}} 
    \]
    which have the properties that $\varrho_{\mathscr{H}^1} \eqdef \alpha^{-1}\mathscr{H}^1\mres \Sigma$, $\supp \varrho_{\text{exc}} \subset \Sigma$, and $\nu_{\text{exc}} = \varrho_{\text{exc}} \mres \Sigma = \varrho_{\text{exc}}$. 
    The intuition on this notation is that $\varrho_{\text{exc}}$ is the excess of the measure that is uniformly distributed. In Case 2, this excess is irrelevant, but it is important in Case 1, as seen above, and must be taken into account in order to prove this Proposition in full generality. 

    This decomposition is induced by a decomposition of any optimal transportation plan $\gamma \in \Pi(\varrho_0, \nu_\star)$
    \[
        \gamma = \gamma_{\mathscr{H}^1} + \gamma_{\text{exc}},
    \]  
    with the property that $\gamma_{\mathscr{H}^1} \in \Pi(\varrho_{\mathscr{H}^1}, \nu_{\mathscr{H}^1})$ and $\gamma_{\text{exc}} \in \Pi(\varrho_{\text{exc}}, \nu_{\text{exc}})$ are both optimal transportation plans between their marginals, w.r.t.~the cost $|x-y|^p$. 

    As a result, $\gamma_{\text{exc}}$ a.e. $(x,y)$ trivially satisfy the desired projection property~\eqref{eq.formed_projections}. To deal with the contribution of $\gamma_{\mathscr{H}^1}$, we first define the following set
    \begin{equation}
        \bar \Gamma 
        \eqdef 
        \left\{
            x \in \Gamma : \theta_1(\nu_\star,x) = \alpha^{-1}
        \right\},
        \text{  where $\alpha = \mathcal{L}(\nu_\star)$.}
    \end{equation}
    From the characterization of solutions~\eqref{eq.chara.solution}, $\gamma_{\mathscr{H}^1} \mres \mathbb{R}^d \times \Gamma$ is actually concentrated in $\mathbb{R}^d \times \bar \Gamma$, so we can focus on this set from now on. 

    The strategy is to show that any transport pair $(x,y)$ in the “bad set” defined for $\delta > 0$ as 
    \[
        E_\delta 
        \eqdef 
        \left\{
            (x,y) \in \mathbb{R}^d \times \bar \Gamma : 
            \ |x-y|^p > {\dist(x,\Sigma)}^p + \delta            
        \right\}, 
    \]
    has zero contribution for all $\delta>0$. The ``good set'' is then given by
    \[
        F \eqdef 
        \left\{
            (x,y) \in \mathbb{R}^d \times \bar \Gamma : 
            \ |x-y|^p = {\dist(x,\Sigma)}^p
        \right\}. 
    \]

    Setting the measures $\nu_\delta, \nu_F$ defined for a Borel set $A$ as 
    \begin{equation}\label{eq.nu_delta}
        \nu_\delta(A) \eqdef \gamma \mres E_\delta (\mathbb{R}^d \times A),
        \quad 
        \nu_F(A) \eqdef \gamma\mres F (\mathbb{R}^d \times A),
    \end{equation}
    it follows by construction that $\nu_\delta, \nu_F \le \nu_\star$. Indeed, for any $0 \le f \in \mathscr{C}_b(\mathbb{R}^d)$ we have that
    \[
        \int_{\mathbb{R}^d} f \dd \nu_\delta
        = 
        \int_{E_\delta} f \dd \gamma
        \le 
        \int_{\mathbb{R}^d \times \mathbb{R}^d} f \dd \gamma
        = 
        \int_{\mathbb{R}^d} f \dd \nu_\star. 
    \]
    Similarly for $\nu_F$. Given the characterization of solutions given by~\eqref{eq.chara.solution} and the fact that we have removed the possible singular parts of $\nu_\star$ w.r.t.~$\mathscr{H}^1$ by only considering the mass over $\bar \Gamma$, this implies that $\nu_\delta, \nu_F$ are $1$-rectifiable measures. In addition, a similar argument using the monotone convergence theorem shows that 
    \begin{equation}
        \nu_\star \mres \bar \Gamma 
        = 
        \nu_F + 
        \sup_{\delta > 0} \nu_\delta. 
    \end{equation}
    Therefore, the result will follow if we show that $\nu_\delta \equiv 0$, for all $\delta > 0$. Indeed, let ${\left(\gamma^y\right)}_{y \in \Sigma}$ denote a disintegration of $\gamma$ w.r.t.~its second marginal $\nu_\star$, we have
    \[
        \gamma(\mathbb{R}^d\times \bar \Gamma) 
        = 
        \int_{\bar \Gamma} \gamma^y(\mathbb{R}^d)\dd \nu_\star
        = 
        \int_{\bar \Gamma} \gamma^y(\mathbb{R}^d)\dd \nu_F 
        = 
        \gamma(F). 
    \]
    Since $F \subset \mathbb{R}^d\times \bar \Gamma$, it must hold that $\gamma(\mathbb{R}^d\times \bar \Gamma \setminus F) = 0$ and the result follows. To conclude it suffices to show that for any $\bar y \in \bar\Gamma$, admitting an approximate tangent space $T_{\bar y}\Sigma = T_{\bar y} \Gamma$ it holds that
    \[
        \theta_1(\nu_\delta, \bar y) = 0.
    \]
    As $\nu_\delta$ is a rectifiable measure, this will imply that $\nu_\delta \equiv 0$.}

    Fix $\bar y \in \bar \Gamma$ a point as above, which exists thanks to Lemma~\ref{lemma.noncut_property}. Let ${\left(r_n\right)}_{n \in \mathbb{N}}$ be an infinitesimal sequence obtained from Lemma~\ref{lemma.noncut_property} such that $\Sigma_n \eqdef \Sigma \setminus B_{r_n}(\bar y)$ remains connected. For $n$ large enough, let us show that if 
    \begin{equation}\label{eq.projection_far_from_ball}
        (x,y) \in E_\delta\cap (\mathbb{R}^d \times B_{r_n}(\bar y))
        \text{ then }
        \pi_\Sigma(x) \in {  \Sigma_n}.
    \end{equation}
    Indeed, for such a pair $(x,y)$ we have that
    \begin{align*}
        {\dist(x,\Sigma)}^p + \delta
        &\le 
        |x - y|^p 
        \le 
        {\left(
            \dist(x,\Sigma) + |\pi_\Sigma(x) - y|
        \right)}^p \\ 
        &\le 
        {\dist(x,\Sigma)}^p + p{\left(\dist(x,\Sigma) + |y - \pi_\Sigma(x)|\right)}^{p-1}|y - \pi_\Sigma(x)|\\
        &\le 
        {\dist(x,\Sigma)}^p + { C_{p,\varrho_0,\Sigma}}|y - \pi_\Sigma(x)|,
    \end{align*} 
    where the third inequality follows from the convexity of $t\mapsto |t|^p$, and the constant is defined as {  $C_{p,\varrho_0,\Sigma} \eqdef p(2\dist(\supp\varrho_0,\Sigma))^{p-1}$, which is finite since both $\supp\varrho_0$ and $\Sigma$ are compact}. As a result, for $n$ sufficiently large, we obtain that
    \[
        2 r_n 
        < \frac{\delta}{{ C_{p,\varrho_0,\Sigma}}} 
        \le |y - \pi_\Sigma(x)|.    
    \]
    Since $y \in B_{r_n}(\bar y)$, it must follow that $\pi_\Sigma(x) \in \Sigma \setminus B_{r_n}(\bar y)$, for $n$ large enough. 

    In the sequel, we write $B_{r_n} = B_{r_n}(\bar y)$ to simplify notation, and we define an alternative transportation plan as follows
    \begin{equation}\label{eq.competitor_transport_plan}
        \gamma' 
        \eqdef
        \gamma \mres \mathbb{R}^d \times \Sigma_n
        + 
        {(\pi_0 , \pi_\Sigma \circ \pi_0)}_\sharp \gamma 
        \mres E_\delta \cap (\mathbb{R}^d \times B_{r_n})
        + 
        {(\pi_0 , y_n)}_\sharp\gamma \mres (\mathbb{R}^d \times B_{r_n}) \setminus E_\delta,
    \end{equation}
    where $\pi_0$ denote the projection onto the first marginal, \textit{i.e} $\pi_0(x,y) = x$, and $y_n$ { is an arbitrary point of} $\Sigma_n \cap \partial B_{r_n}(\bar y)$. Its second marginal then defines a new competitor as {  $\nu' \eqdef {(\pi_1)}_\sharp\gamma'$, which can be characterized via duality as 
    \begin{align*}
        \int_{\mathbb{R}^d} f \dd \nu'
        &=
        \int_{\mathbb{R}^d \times \Sigma_n} f(y) \dd \gamma
        +
        \int_{E_\delta \cap (\mathbb{R}^d \times B_{r_n})} f(\pi_\Sigma(\pi_0(x,y))) \dd \gamma
        +
        \int_{(\mathbb{R}^d \times B_{r_n}) \setminus E_\delta} f(y_n) \dd \gamma\\ 
        &= 
        \int_{\mathbb{R}^d\times \mathbb{R}^d} f(y) \dd \gamma'. 
    \end{align*}
    As a result we get
    \begin{equation}\label{eq.competitor}
        \nu' 
        \eqdef  
        \nu_\star \mres \Sigma_n + 
        \nu_\delta \mres B_{r_n}
        +
        \gamma \left(
            (\mathbb{R}^d \times B_{r_n}) \setminus E_\delta
        \right)\delta_{y_n}.
    \end{equation}    
    }

    {  In the transportation plan defined in~\eqref{eq.competitor_transport_plan}, the first term preserves the transportation plan that forms $\Sigma_n$; thanks to the definition of $E_\delta$, the second projects onto $\Sigma$ the mass that is sent to $\Sigma \cap B_{r_n}$ with a transportation that exceeds the distance to $\Sigma$ in at least $\delta$, and the last term sends the remaining mass whose projection is close to $\Sigma\cap B_{r_n}$ to the point $y_n$, creating a Dirac mass at $y_n$.} 
    
    Since the mass on the second term of the transportation plan $\gamma'$ in~\eqref{eq.competitor_transport_plan} is sent to $\Sigma_n$, it follows that $\supp \nu' = \Sigma_n$. But since this operation can only increase the density of $\nu_\star$ over $\Sigma_n$, we have that $\nu' \mres \Sigma_n \ge \nu_\star \mres \Sigma_n$ and it follows that
    \begin{equation}\label{eq.length_functional_inequality}
        \mathcal{L}(\nu_\star) \ge \mathcal{L}(\nu').
    \end{equation} 
    
    This construction yields
    \begin{align*} 
        W_p^p(\varrho_0, \nu_\star)
        &= 
        \int_{\mathbb{R}^d\times \Sigma_n}|x-y|^p\dd \gamma 
        + 
        \int_{\mathbb{R}^d\times B_{r_n} \cap E_\delta} |x-y|^p\dd \gamma+
        \int_{\mathbb{R}^d\times B_{r_n} \setminus E_\delta} |x-y|^p\dd \gamma\\ 
        &\ge
        \int_{\mathbb{R}^d\times \Sigma_n}|x-y|^p\dd \gamma 
        + 
        \int_{\mathbb{R}^d\times B_{r_n} \cap E_\delta} 
        \left({\dist(x,\Sigma)}^p + \delta\right)\dd \gamma\\ 
        &+ 
        \int_{\mathbb{R}^d\times B_{r_n} \setminus E_\delta} |x - y_n|^p\dd \gamma 
        - 
        p \int_{\mathbb{R}^d\times B_{r_n} \setminus E_\delta} 
        \underbrace{\left| |x - y_n| - |x - y| \right|}_{\le |y - y_n| \le 2r_n} |x - y_n|^{p-1} \dd \gamma\\ 
        \ge&
        \int_{\mathbb{R}^d\times \mathbb{R}^d} |x - y|^p\dd \gamma' 
        + 
        \delta  \nu_\delta(B_{r_n}) 
        - 2p r_n 
        \int_{\mathbb{R}^d\times B_{r_n} \setminus E_\delta}
        |x - y_n|^{p-1}\dd \gamma,
    \end{align*}
    { 
    where from the first to second line we have used the convexity of $t \mapsto t^p$ for $p \ge 1$. From the minimality of $\nu_\star$ and~\eqref{eq.length_functional_inequality} we have that
    \[
        W_p^p(\varrho_0, \nu_\star) 
        \le 
        W_p^p(\varrho_0, \nu')
        \le 
        \int_{\mathbb{R}^d\times \mathbb{R}^d} |x - y|^p\dd \gamma'.
    \]
    As a result, the previous estimate gives a bound on the $1$-density of the measure $\nu_\delta$, defined in~\eqref{eq.nu_delta}, which we were interested in: 
    \begin{align*}
        \frac{ \nu_\delta(B_{r_n}(\bar y)) }{2r_n}
        &\le 
        \frac{p}{\delta}
        \int_{\mathbb{R}^d\times B_{r_n} \setminus E_\delta}
        |x - y_n|^{p-1}\dd \gamma
        \cvstrong{n \to \infty}{} 0,
    \end{align*}
    where the last limit follows from the fact that the integrand is bounded and 
    \[
        \gamma(\mathbb{R}^d\times B_{r_n}(\bar y)) 
        = \nu_\star(B_{r_n}(\bar y)) 
        = \theta_1(\nu_\delta, \bar y) r_n + o(r_n) 
        = \alpha^{-1} r_n + o(r_n)
        \cvstrong{n \to \infty}{} 0. 
    \]
    We conclude that for $\mathscr{H}^1$-a.e.~point $\bar y$ of $\bar \Gamma$, it holds that
    $
        \theta_1(\nu_\delta, \bar y) = 0,
    $
    and the result follows. }
\end{proof}
{
      
    \begin{remark}
        Notice that the above proposition is very general, it covers any original measure $\varrho_0$ with compact support, and for which we can construct a Borel measurable selection of the projection multimap $\Pi_\Sigma$. In pathological cases where $\varrho_0$ concentrates on sets of dimension between $0$ and $1$, the thesis is still valid, but it is not clear how to exploit this result to prove absence of loops. Indeed, the tree property is not true in general if $\varrho_0$ is a uniform measure on a circle since then the solution $\nu_\star$ has incentive to concentrate on this circle, as indicates the characterization~\eqref{eq.chara.solution}. 

        The difficulty is to deal with the set $\bar \Gamma$, since it might even be dense in $\Gamma$, in which case the construction via the concentration/blow-up argument following this Proposition is not feasible. This set coincides with $\Gamma$, up to a set of zero $\mathscr{H}^1$ measure, under the same conditions that are sufficient to have existence of solutions to the original problem~\eqref{problem.shape_optimization}, when $\varrho_0$ does not give mass to sets of dimensional smaller or equal to $1$. In the case that $\varrho_0$ has atoms at ${\left(x_i\right)}_{i = 1}^N$, or for instance composed of an atomic plus absolutely continuous part, we have that $\bar \Gamma = \Gamma \setminus {\left(x_i\right)}_{i = 1}^N$.
    \end{remark}
}

\subsection{Localizations and blow-up}\label{subsec.blow-up}
Since we know from Prop~\ref{prop.projection_to_loops} that loops are formed though projections, we can perform Step (2) from Section~\ref{subsection.localization_blowup_arg}. That is, we chose a suitable point to perform localizations. 

As the proof is by contradiction, we first assume that $\Sigma$ contains a loop $\Gamma$. We consider 
\begin{equation}\label{eq.assumptions_y_0}
    y_0 \in \Gamma, \text{ is a noncut point such that } T_{y_0}\Sigma = T_{y_0}\Gamma,
\end{equation}
which can be done since, $\mathscr{H}^1$-a.e., the approximate tangent spaces to $\Sigma$ and $\Gamma$ coincide. In Cases 1 and 3, where $\varrho_0$ has atoms, we make the additional assumption
\begin{equation}
    y_0 \neq x_i, \text{ for all } i = 1,\dots,N.
\end{equation}
Next, let ${\left(r_n\right)}_{n \in \mathbb{N}}$ be a sequence of radii obtained from Lemma~\ref{lemma.noncut_property}, and we introduce the following notation
\begin{equation}
    \Sigmayrn \eqdef \Sigma \cap B_{r_n}(y_0), \ 
    \Sigma_n \eqdef \Sigma \setminus \Sigmayrn,
\end{equation}
so that from Lemma~\ref{lemma.noncut_property} it holds that
\begin{equation}
    \Sigmayrn \text{ and }\Sigma_n \text{ are connected and } r_n \to 0. 
\end{equation}

In the sequel, we will focus our attention to the following sequence of localized measures 
\[
    \nu_n \eqdef \nu_\star \mres \Sigmayrn.    
\]
From the optimality of $\nu_\star$, this sequence minimizes a family of localized variational problems consisting of the transportation of ``the portion of $\varrho_0$ that is sent to $\nu_n$'', namely 
\[
    \varrho_n \eqdef 
    {(\pi_0)}_\sharp 
    \left(
        \gamma\mres \mathbb{R}^d\times\Sigmayrn
    \right).  
\]
In Case 2, we can equivalently write $\varrho_n = \varrho_0\mres {T}^{-1}(\Sigmayrn)$, where $T$ corresponds to the optimal transportation map from $\varrho_0$ to $\nu_\star$. 

Afterward, we define a blow-up of this sequence of problems and extract a limit. But to prevent the measure $\varrho_n$ from losing mass at infinity in the blow-up step, as in~\cite{chambolle2025one} we perform the concentration operation from Step (3) of Section~\ref{subsection.localization_blowup_arg}. More specifically, we let $\varrho_n$ evolve along a constant speed geodesic in the Wasserstein space almost until it reaches $\nu_n$, defined as follows: if $\gamma_n$ is an optimal transportation plan between $\varrho_n$ and $\nu_n$, we are interested in the following geodesic interpolation between them
\begin{equation}\label{eq.sigma_n}
    \begin{aligned}
        \sigma_n 
        &\eqdef 
        {\left(\pi_{r_n}\right)}_\sharp\gamma_n 
        \text{ where }
        \pi_{r_n}\eqdef 
        r_n \pi_0 + (1-r_n)\pi_1.
    \end{aligned}
\end{equation}
The reader is referred to~\cite[Theorem~5.27]{santambrogio2015optimal} for a proof of the fact that the above interpolation indeed yields geodesics for the $W_p$ distance. 

With these elements we obtain the following result, whose proof is included in Appendix~\ref{appendix} for completeness since it is a minor variant of the results found in~\cite{chambolle2025one}. But as we are interested in making variations that will ``open'' the loop $\Gamma$, to simplify the notation we define the following class of sets
\begin{equation}\label{eq.variations_2connected_components}
    \mathcal{A}_2 
    \eqdef 
    \left\{
        \Sigma' \subset \overline{B_1(0)} : 
            \text{ $\Sigma'$ has at most 2 connected components}
    \right\}.
\end{equation}
\begin{lemma}\label{lemma.localization}
    The localized measure $\nu_n$ solves the following minimization problem,
    \begin{equation}\label{eq.localized_problem}
        \min \left\{
            W_p^p(\sigma_n, \nu'): 
            \substack{
                \displaystyle
                \text{ there is }
                \Sigma' \in \mathcal{A}_2 
                \text{ such that }\\  
                \\
                \displaystyle
                \nu' \in \mathscr{M}_+(\Sigma'), \ 
                \nu' \ge \alpha^{-1}\H^1\mres \Sigma',\\ 
                \\
                \displaystyle
                \Sigma_n \cup \Sigma' \text{ is connected,}\\ 
                \\
                \displaystyle
                \ \nu'(\overline{B_{1}(0)}) = 
                \nu_\star\left(\Sigmayrn\right)\\
            }           
        \right\},
    \end{equation}
    {  where we recall the notation $\alpha \eqdef \mathcal{L}(\nu_\star)$}
\end{lemma}

In the sequel, recalling the definition of the blow-up operator $\Phi^{y_0,r} = \frac{\id - y_0}{r}$ from~\eqref{eq.blowup_measure} in Section~\ref{sec.golab_length_tangent}, we notice that for any given measures $\mu, \nu$ it holds that
\begin{equation}\label{blowup_Wasserstein_identity_qp}
    W_p^p
    \left(
        \frac{1}{r}{(\Phi^{y_0, r})}_\sharp \mu, 
        \frac{1}{r}{(\Phi^{y_0, r})}_\sharp \nu
    \right) 
    = 
    \frac{1}{r^{p+1}} 
    W_p^p\left( \mu,  \nu\right). 
\end{equation}
We are particularly interested in the sequences of blow-ups of the measures $\sigma_n$ and $\nu_n$:
\begin{equation}\label{blowup_measures_renormalized_qp}
    \bar \sigma_n \eqdef \frac{1}{r_n}{(\Phi^{y_0,r_n})}_\sharp \sigma_n, 
    \quad
    \bar \nu_n \eqdef \frac{1}{r_n}{(\Phi^{y_0,r_n})}_\sharp \nu_n,
\end{equation}
since we already know from Lemma~\ref{lemma.localization} that they will inherit some optimality property. 

From Lemma~\ref{lemma.localization} and~\eqref{blowup_Wasserstein_identity_qp}, each element from the sequence ${\left(\bar \nu_n\right)}_{n \in \mathbb{N}}$ is a minimizer of a sequence of functionals ${\left(F_n\right)}_{n \in \mathbb{N}}$ defined as
\begin{equation}\label{eq.family_Fn}
    F_n(\nu') \eqdef
    \begin{cases}
        W_p^p
        \left(  
        \bar \sigma_{n},  
        \nu' 
        \right),&
        \substack{
                \displaystyle
                \text{ there is }
                \Sigma' \in \mathcal{A}_2 
                \text{ such that }\\  
                \\
                \displaystyle
                \nu' \in \mathscr{M}_+(\Sigma'), \ 
                \nu' \ge \alpha^{-1}\H^1\mres \Sigma',\\ 
                \\
                \displaystyle
                \left(\frac{\Sigma_n - y_0}{r_n}\right) \cup \Sigma' \text{ is connected,}\\ 
                \\
                \displaystyle
                \ \nu'(\overline{B_{1}(0)}) = 
                \frac{\nu_\star\left(\Sigmayrn\right)}{r_n},\\
            }
        \\ 
        +\infty,& \text{ otherwise.}
    \end{cases}
\end{equation}
Now, recall that from the blow-up properties of $\Sigma$, if follows that
\[
    \frac{\Sigma_{y_0,r_n} - y_0}{r_n} \cvstrong{n \to \infty}{d_H} T_{y_0}\Sigma \cap \overline{B_1(0)}. 
\]
We can also extract a subsequence for the convergence of the measures, so that it holds that 
\begin{equation}\label{eq.limit_blowup_measures}
    \bar \sigma_n \cvstar{n \to \infty} \bar \sigma, \quad 
    \bar \nu_n \cvstar{n \to \infty} \bar \nu,
\end{equation}
in $\overline B_1(0)$. This motivates the following limit problem, which is minimized by $\bar \nu$ as we shall prove later, 
\begin{equation}\label{eq.limit_F}
    F(\nu') \eqdef
    \begin{cases}
        W_p^p
        \left(  
        \bar \sigma,  
        \nu'
        \right),&
        \substack{
                \displaystyle
                \text{ there exists }
                \Sigma' \in \mathcal{A}_2
                \text{ such that }\\
                \\
                \displaystyle
                \nu' \in \mathscr{M}_+(\Sigma'), \ 
                \nu' \ge \alpha^{-1}\H^1\mres \Sigma',\\ 
                \\
                \displaystyle
                T_{y_0}\Sigma \cap \partial B_1(0) \subset \Sigma',\\ 
                \\
                \displaystyle
                \ \nu'(\overline{B_{1}(0)}) = 2\theta_1(\nu_\star, y_0),\\
            }
        \\ 
        +\infty,& \text{ otherwise.}
    \end{cases}
\end{equation}
{  Before proceeding let us make a comment on the constraints on $F$. The condition $T_{y_0}\Sigma \cap \partial B_1(0) \subset \Sigma'$ comes from the connectedness constraint from $F_n$, so one would wonder if the connectedness constraint on $F_n$ would not pass to the limit as a connectedness constraint as well. We have crafted our class of variations in order to ``open the loop'' in a small ball, allow for variations inside it, and then perform the blow-up. Since the variations are contained in a ball $B_{r_n}$ and $r_n$ is the scale of the blow-up, this gives rise to the sets $\Sigma'$ in the definition of $F$. But the blowup outside of $B_{r_n}$ flattens $\Sigma$ in such a way that $\Sigma\setminus B_{r_n}$ becomes two disjoint segments. As a result, imposing for instance for $\Sigma' \cup (T_{y_0}\Sigma\setminus B_1(0))$ to be connected would mean closing back the loop.}

Step (3) described in Section~\ref{subsection.localization_blowup_arg} consists of defining the functionals $F_n$ above and show that they $\Gamma$-converge to $F$. This is done in the following Theorem, whose proof is also left to the Appendix~\ref{appendix}. 
\begin{theorem}\label{theorem.gamma_conv_Fn}
    The family ${\left(F_n\right)}_{n \in \mathbb{N}}$ converges to $F$ in the sense of $\Gamma$-convergence, for the topology of weak-$\star$ convergence of Radon measures. 
\end{theorem}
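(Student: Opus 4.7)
The plan is to verify the two standard inequalities characterizing $\Gamma$-convergence in the weak-$\star$ topology on Radon measures: the $\Gamma$-liminf bound $F(\nu) \le \liminf_n F_n(\nu_n)$ whenever $\nu_n \cvstar{n\to\infty} \nu$, and the existence of a recovery sequence for each $\nu$ with $F(\nu) < +\infty$. Since all competitors for $F_n$ and $F$ are supported in $\overline{B_1(0)}$, narrow convergence and weak-$\star$ convergence coincide and masses are preserved along the limits, which considerably simplifies the bookkeeping.

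\textbf{$\Gamma$-liminf.} Assuming $\liminf_n F_n(\nu_n) < +\infty$, I would pass to a subsequence realizing the liminf and pick $\Sigma_n' \in \mathcal{A}_2$ witnessing the $F_n$-admissibility of $\nu_n$. The uniform mass bound $\nu_n(\overline{B_1(0)}) = \nu_\star(\Sigmayrn)/r_n$ (bounded because the one-dimensional density $\theta_1(\nu_\star, y_0)$ is finite) combined with $\alpha \nu_n \ge \H^1 \mres \Sigma_n'$ yields $\sup_n \H^1(\Sigma_n') < +\infty$, so Blaschke's compactness provides a further subsequence with $\Sigma_n' \cvstrong{n\to\infty}{d_H} \Sigma' \subset \overline{B_1(0)}$. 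I would then verify each constraint of $F(\nu)$: (i) $\Sigma' \in \mathcal{A}_2$ by upper semicontinuity of the number of connected components under Hausdorff convergence of compact sets; (ii) $\supp \nu \subset \Sigma'$ by combining weak-$\star$ convergence of $\nu_n$ with Kuratowski convergence of $\Sigma_n'$; (iii) $\alpha \nu \ge \H^1 \mres \Sigma'$ by applying the density version of \Golab's theorem (Theorem~\ref{theorem.Golab_localversion}) to $\alpha \nu_n \ge \H^1 \mres \Sigma_n'$ and matching against the weak-$\star$ limit; (iv) the mass identity $\nu(\overline{B_1(0)}) = 2 \theta_1(\nu_\star, y_0)$ by support compactness and the definition of one-dimensional density; and (v) $T_{y_0}\Sigma \cap \partial B_1(0) \subset \Sigma'$, using Lemma~\ref{lemma.blowup_domain_measure} (Kuratowski convergence of $(\Sigma_n - y_0)/r_n$ to $T_{y_0}\Sigma$) together with Lemma~\ref{lemma.noncut_property} (exactly two attachment points $y_n^\pm \in \partial B_1(0)$, converging to the two endpoints $\pm \tau \in T_{y_0}\Sigma \cap \partial B_1(0)$) so that the connectedness constraint on $((\Sigma_n - y_0)/r_n) \cup \Sigma_n'$ forces $y_n^\pm \in \Sigma_n'$, and hence $\pm \tau \in \Sigma'$ in the Hausdorff limit. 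The desired inequality then follows from lower semicontinuity of $W_p^p$ under narrow convergence with uniformly bounded supports, applied to $\bar \sigma_n \cvstar{}\bar \sigma$ and $\nu_n \cvstar{}\nu$.

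\textbf{$\Gamma$-limsup.} Given $\nu$ supported in $\Sigma' \in \mathcal{A}_2$ with $\{\pm \tau\} \subset \Sigma'$ and $\alpha \nu \ge \H^1 \mres \Sigma'$, I would set $\Sigma_n' \eqdef \Sigma' \cup [\tau, y_n^+] \cup [-\tau, y_n^-]$, where $y_n^\pm$ are the attachment points produced by Lemma~\ref{lemma.noncut_property}. By Lemma~\ref{lemma.blowup_domain_measure} the segments added have length $o(1)$, the number of connected components is unchanged (each segment is glued at an endpoint $\pm \tau$ already in $\Sigma'$), and the resulting union $\Sigma_n' \cup (\Sigma_n - y_0)/r_n$ is connected. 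I would then define the recovery measure $\nu_n \eqdef \nu + \alpha^{-1} \H^1 \mres (\Sigma_n' \setminus \Sigma') + c_n \delta_{y_n^+}$, where $c_n \to 0$ is chosen so that $\nu_n(\overline{B_1(0)}) = \nu_\star(\Sigmayrn)/r_n$. By construction $\nu_n \cvstar{}\nu$, $\nu_n$ is $F_n$-admissible, and continuity of $W_p^p$ under narrow convergence with compact supports gives $F_n(\nu_n) \to F(\nu)$.

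\textbf{Main obstacle.} The hard part will be the $\Gamma$-liminf, specifically the transfer of the density constraint $\alpha \nu_n \ge \H^1 \mres \Sigma_n'$ to the limit. The length functional $\Sigma' \mapsto \H^1(\Sigma')$ is \emph{not} lower semicontinuous under Hausdorff convergence, but the density version of \Golab's theorem (Theorem~\ref{theorem.Golab_localversion}) is precisely tailored to deliver this transfer, and it is exactly the reason for introducing the relaxed length functional $\mathcal{L}$. A secondary subtlety in the recovery sequence is ensuring $c_n \ge 0$: if $\nu_\star(\Sigmayrn)/r_n$ happens to be smaller than $2 \theta_1(\nu_\star, y_0) + \alpha^{-1} \H^1(\Sigma_n' \setminus \Sigma')$, one must first rescale $\nu$ by a factor $1 - o(1)$ before adding the extra mass; this local adjustment does not affect the convergence $F_n(\nu_n) \to F(\nu)$.
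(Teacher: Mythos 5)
Your $\Gamma$-liminf argument is essentially the paper's: Blaschke compactness for the supports, the density version of \Golab's Theorem (Theorem~\ref{theorem.Golab_localversion}) to pass the constraint $\alpha\nu_n'\ge\H^1\mres\Sigma_n'$ to the limit, and the attachment-point analysis supplied by Lemma~\ref{lemma.noncut_property}. One small imprecision: in step (v) you assert that the connectedness of $\bigl((\Sigma_n-y_0)/r_n\bigr)\cup\Sigma_n'$ forces \emph{both} boundary points $y_n^{\pm}$ into $\Sigma_n'$, but when $\Sigma_n'$ has a single connected component it need only contain one of them, since $(\Sigma_n-y_0)/r_n$ is itself connected and already joins the two points; the paper is careful to write ``at least one of $y_{i,n}$, or both if it has two connected components.''

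For the $\Gamma$-limsup you take a genuinely different route. The paper builds the recovery sequence via a composed map: renormalize $\nu'$ by $a_n=\nu_\star(\Sigmayrn)/(2r_n\theta_1(\nu_\star,y_0))$, shrink each connected component of $\Sigma'$ by $1/s_n$ with $s_n=\max(1,a_n^{-1})$, translate its endpoint onto the attachment point $y_{i,n}$, and finally project onto $\overline{B_1(0)}$. You instead leave $\nu$ and $\Sigma'$ untouched, glue on the short chords $[\pm\tau,y_n^{\pm}]$ carrying exactly the minimal admissible density $\alpha^{-1}\H^1$, and correct the mass by a Dirac $c_n\delta_{y_n^{+}}$. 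When $c_n\ge 0$ this is indeed simpler and avoids the projection step entirely. The genuine gap is the case $c_n<0$: your proposed fix of rescaling $\nu$ by a factor $1-o(1)$ yields $\alpha\,(1-o(1))\,\nu\ge(1-o(1))\,\H^1\mres\Sigma'$, which is \emph{strictly weaker} than the required $\alpha\nu_n'\ge\H^1\mres\Sigma_n'$ unless the original constraint happens to have slack, which you cannot assume. To scale the mass down while preserving the density bound one must simultaneously shrink the support by the same factor, since a dilation of $\Sigma'$ by $a_n<1$ scales $\H^1\mres\Sigma'$ by $a_n$ as well, restoring the inequality; this is exactly what the paper's $s_n$ accomplishes. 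Your construction can be salvaged, but only by importing the paper's shrink-and-translate device, at which point the two proofs effectively merge.
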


In Step (4), we transfer a lot of information about the minimization of $F_n$ to the minimization of $F$, by means of the $\Gamma$-convergence result and the fact that the optimal transportation in the definition of $F_n$ is almost achieved via projections. In fact, only the transportation onto $\Gamma \cap B_{r_n}(y_0)$ is given by projections, and there might be some mass in the set $(\Sigma\setminus \Gamma)\cap B_{r_n}(y_0)$. But since $\Sigma$ and $\Gamma$ have the same approximate tangent space at $y_0$, this contribution vanishes as $n\to \infty$, and the limit 
inherits the projection properties from the loop $\Gamma$. 

{ 
    In the following Lemma we formalize the previous discussion, and to simplify notation we assume without loss of generality that the tangent space of $\Sigma$ at $y_0$ corresponds to the vertical axis in $\mathbb{R}^d$, that is 
    \begin{equation}
        T_{y_0}\Sigma = \mathbb{R}e_d, 
    \end{equation}
    where ${(e_i)}_{i=1}^d$ denotes the canonical euclidean basis of $\mathbb{R}^d$. The previous discussion is formalized in the following Lemma.
}
\begin{lemma}\label{lemma.properties_Gamma_limit}
    The following assertions are true:
    \begin{enumerate}
        \item[(i)] We have $\bar \nu = 2\theta_1(\nu_\star,y_0)\H^1\mres [-e_d, e_d]$ and it is a minimizer of $F$;
        { 
        \item[(ii)] In Case 1, when $\varrho_0$ is atomic, define the quantity 
            \begin{equation*}
                0 < L \eqdef \min_{i = 1,\dots,N} |y_0 - x_i|.
            \end{equation*}
        Then we have that $\supp \bar \sigma \subset \{ \dist(\cdot, T_{y_0}\Sigma) \ge L\}$.
        \item[(iii)] In Cases 2 and 3, when $\varrho_0$ is absolutely continuous with a density $\varrho_0 \in L^\infty(\Omega)$, or the mixture of an atomic measure and a bounded density with compact support, consider the following cylindrical sets 
        \begin{equation}\label{eq.cylinders}
            C_{\delta,\varepsilon} 
            \eqdef 
            \left\{
                (x',x_d) \in \mathbb{R}^d\times \mathbb{R} : 
                \substack{
                    \displaystyle
                    \dist(x',\mathbb{R}e_d) < \delta, \\ 
                    \displaystyle
                    |x_d| < \varepsilon
                }
            \right\},
        \end{equation}
        then we have the following estimate for $\bar \sigma$
        \[
           \bar \sigma(C_{\delta,\varepsilon}) \le  2\omega_{d-1}\norm{\varrho_0}_\infty\varepsilon\delta^{d-1}, 
        \]
        in Case 2 and the respective bound works for $\delta, \varepsilon$ small enough in Case 3 with $\varrho_0$ replaced with its bounded density component.
        In particular, $\bar \sigma(T_{y_0}\Sigma \cap \overline{B_1(0)}) = 0$
        }
        \item[(iv)] The optimal transportation from $\bar \sigma$ to $\bar \nu$ is attained by the projection map onto $T_{y_0}\Sigma$. 
    \end{enumerate}
\end{lemma}
\begin{proof}
    Starting with item $(i)$, recall that 
    \[
        \bar \nu_n = 
        \frac{1}{r_n}{\left(\Phi^{y_0,r_n}\right)}_\sharp\nu_n,
    \] 
    where $\nu_n$ is a minimizer of~\eqref{eq.localized_problem} thanks to Lemma~\ref{lemma.localization}. As a result, $\Sigmayrn$ satisfies the restrictions of~\eqref{eq.localized_problem}. As a result, the set 
    $\displaystyle
        \frac{\Sigmayrn - y_0}{r_n}
    $
    satisfy all the restrictions of $F_n$ for $\bar \nu_n$. On the other hand, given any $\varrho$ satisfying the restrictions of $F_n$ with a set $\Sigma'$ yields $\nu' = r_n{\left(\Phi^{y_0,r_n}\right)}^{-1}_\sharp \varrho$ admissible for~\eqref{eq.localized_problem} with the set $y_0 + r_n \Sigma'$. Indeed, the only property that requires checking is that $\nu' \ge {\alpha}^{-1}\mathscr{H}^1\mres (y_0 + r_n \Sigma')$. This follows from a simple change of variables since, for any continuous $\phi \ge 0$, we have
    \begin{align*}
        \int \phi \dd \nu' 
        &= 
        r_n \int \phi(y_0 + r_n x)\dd \varrho(x) 
        \ge 
        {\alpha}^{-1}r_n \int_{\Sigma'} \phi(y_0 + r_n x) \dd \H^1(x) \\ 
        &= 
        {\alpha}^{-1}r_n \int_{y_0 + r_n\Sigma'} \phi\dd \H^1.
    \end{align*}

    As a result, using identity~\eqref{blowup_Wasserstein_identity_qp}, it follows that 
    \begin{align*}
        W_p^p(\bar \sigma_n, \bar \nu_n) 
        &= 
        \frac{1}{r_n^{p+1}}
        W_p^p(\sigma_n, \nu_n) 
        \le 
        \frac{1}{r_n^{p+1}}
        W_p^p(\sigma_n, \nu)\\ 
        &\le
        W_p^p(\bar \sigma_n, \varrho),  
    \end{align*}
    showing that $\bar \nu_n$ is a sequence of minimizers, so that the minimality of $\bar \nu$ follows from the fundamental properties of $\Gamma$ convergence.
    {  To finish the proof of item (i), we recall that the sequence $\bar \nu_n$ is of the form 
    \[
        \bar \nu_n = \frac{1}{r_n}{\Phi^{y_0,r_n}}_\sharp(\nu_\star\mres \Sigmayrn),
    \]
    and that $\nu_\star$ is uniformly distributed over $\Sigmayrn$ thanks to the characterization of solutions~\eqref{eq.chara.solution} and the choice of $y_0$. As a result, from the blow-up Theorem~\eqref{eq.blowup_measure} and the definition of $\theta_1(\nu_\star,y_0)$ we conclude. 
    }

    {  Item $(ii)$ follows directly from the fact that $\varrho_0$ is atomic. To prove item $(iii)$,} first we recall that since $\varrho_0$ is absolutely continuous, its optimal transportation is uniquely attained by a map $T$, and we can write $\sigma_n = {T_{r_n}}_\sharp \varrho_n$, with $T_{r_n} \eqdef r_n \id + (1-r_n)T$ and $\varrho_n$-a.e.~$T = \pi_\Sigma$, thanks to Proposition~\ref{prop.projection_to_loops}. 
    { 
    To simplify our analysis in the sequel, using Lemma~\ref{lemma.noncut_property} we can find a sequence ${\left(\varepsilon_n\right)}_{n \in \mathbb{N}}$ such that 
    \[
        \frac{\varepsilon_n}{r_n} \searrow \varepsilon,
    \]
    and having the property that both $\Sigma_{y_0,\varepsilon_n}$ and $\Sigma\setminus \Sigma_{y_0,\varepsilon_n}$ are connected and 
    \begin{equation}\label{eq.multiplicity_2_lemma_proj}
        \Sigma_{y_0,\varepsilon_n} \cap \partial B_{\varepsilon_n}(y_0)
        = 
        \{y_{n,1}, y_{n,2}\}
        =
        \Sigma\setminus \Sigma_{y_0,\varepsilon_n} \cap \partial B_{\varepsilon_n}(y_0).
    \end{equation}
    
    Since the cylindrical sets $C_{\delta,\varepsilon}$ defined in~\eqref{eq.cylinders} are all open, we have for all $\delta,\varepsilon > 0$ that 
    \begin{equation}\label{eq.estimate_Cdelta}
        \bar \sigma(C_{\delta,\varepsilon}) 
        \le 
        \liminf_{n \to \infty}
        \bar \sigma_n(C_{\delta,\varepsilon})
        \le 
        \liminf_{n \to \infty}
        \bar \sigma_n\left(
            C_{\delta,\frac{\varepsilon_n}{r_n}}
        \right),
    \end{equation}}
    where by definition we have that
    \[
        \bar \sigma_n\left(
            C_{\delta,\frac{\varepsilon_n}{r_n}}
        \right)
        = 
        r_n^{-1}
       \varrho_n
        \left(
            {T_{r_n}}^{-1}(y_0 + C_{\delta r_n,\varepsilon_n})
        \right).
    \]

    Hence, let us study the set ${T_{r_n}}^{-1}(y_0 + C_{\delta r_n,\varepsilon_n})$. Consider a pair $(x,y)$ such that $y \in y_0 +  C_{\delta r_n,\varepsilon_n}$, $x \in \supp \varrho_n$ and 
    \begin{equation}\label{eq.def_y}
        y = T_{r_n}(x) = r_n x + (1-r_n)T(x). 
    \end{equation}
    Since for $\varrho_n$-a.e.~$x$, the map $T$ behaves as a projection onto $\Sigma$, and the map $T_{r_n}$ is an interpolation between the identity and the projection onto $\Sigma$, it follows that $T(x) = \pi_\Sigma(x) = \pi_\Sigma(y)$. {  Thanks to property~\eqref{eq.multiplicity_2_lemma_proj}, necessarily the projections satisfy $\pi_\Sigma(x) \in B_{\varepsilon_n}(y_0)$ for $\varrho_n$-a.e. $x$.}
    
    In addition, rearranging the terms in~\eqref{eq.def_y} we obtain
    \begin{align*}
        r_n(x - T(x)) 
        = 
        y - T(x)
        = 
        r_n
        \left(
            \frac{y - y_0}{r_n} - \frac{\pi_\Sigma(y) - y_0}{r_n}
        \right)
    \end{align*}
    so that recalling that $y \in y_0 + r_n C_{\delta,\varepsilon}$, it holds that 
    \begin{align*}
        \dist(x, \Sigma_{y_0,\varepsilon_n}) 
        &= 
        |x - T(x)|
        = 
        \left|
            \frac{y - y_0}{r_n} - \frac{\pi_\Sigma(y) - y_0}{r_n}
        \right|\\
        &= 
        \dist\left(
            \frac{y - y_0}{r_n}, 
            \frac{\Sigma_{y_0,\varepsilon_n} - y_0}{r_n}
        \right)\\ 
        &= 
        \dist\left(
            \frac{y - y_0}{r_n}, 
            T_{y_0}\Sigma \cap B_\varepsilon(0)
        \right)
        + o_{n \to \infty}(1),
    \end{align*}
    where the last equality follows from the equivalence of convergence in the Hausdorff distance and uniform convergence of the distance functions. {  We conclude that for $n$ sufficiently large $\dist(x, \Sigma_{y_0,\varepsilon_n}) \le \delta + o_{n\to\infty}(1)$, so that 
    \[
        \supp\varrho_n \cap {T}_{r_n}^{-1}\left(y_0 + C_{\delta r_n,\varepsilon_n}\right)
        \subseteq 
        \supp\varrho_n \cap 
        \left\{
            \dist\left(\cdot, \Sigma_{y_0,\varepsilon_n}\right) \le \delta + o_{n\to\infty}(1)
        \right\}.
    \]}

    Returning to~\eqref{eq.estimate_Cdelta} with this new inclusion we conclude that
    \begin{equation}\label{eq.estimate3_preview}
        \bar \sigma(C_{\delta,\varepsilon}) 
        \le 
        \liminf_{n \to \infty}
        r_n^{-1}\varrho_0\left(
            \left\{
                \dist\left(\cdot, \Sigma_{y_0,\varepsilon_n}\right) \le 2\delta
            \right\}
        \right). 
    \end{equation}

    To conclude we estimate the right-hand side above. We claim that 
    \begin{equation}\label{eq.volume_estimate}
        \varrho_n\left(
            \left\{
                \dist\left(\cdot, \Sigma_{y_0,\varepsilon_n}\right) \le \delta
            \right\}
        \right)
        \le 
        \norm{\varrho_0}_\infty 
        \omega_{d-1}\mathscr{H}^1(\Sigma_{y_0,\varepsilon_n}) 
        {\delta}^{d-1}
        + o(\varepsilon_n).
    \end{equation}
    This estimate will be proven with a slight refinement of the induction strategy from~\cite[Lemma 4.2]{mosconi2005gamma}. 

    First recall that by the construction from Lemma~\ref{lemma.noncut_property}, both $\Sigma_{y_0,\varepsilon_n}$ and $\Sigma\setminus \Sigma_{y_0,\varepsilon_n}$ are connected and we have that
    \[
        \Sigma_{y_0,\varepsilon_n} \cap \partial B_{\varepsilon_n}(y_0)
        = 
        \{y_{n,1}, y_{n,2}\}
        =
        \Sigma_n \cap \partial B_{\varepsilon_n}(y_0).
    \]
    In particular, $\Sigma_{y_0,\varepsilon_n}$ is $1$-rectifiable and can be covered by countably many connected sets ${\left(\Xi_{n,i}\right)}_{i \in \mathbb{N}}$. We assume without loss of generality that: 
    \begin{itemize}
        \item $\Xi_{n,1} \subset \Gamma$;
        \item $\Xi_{n,1}$ contains the two points of $\Sigma_{y_0,\varepsilon_n}$ on the boundary $\partial B_{\varepsilon_n}(y_0)$;
        \item and as a consequence $\mathscr{H}^1(\Xi_{n,1}) \ge 2\varepsilon_n$.
    \end{itemize}
    In addition, we can assume that the remaining sets $\Xi_{n,i}$ are piece-wise disjoint and for all $i \ge 2$ we have that 
    \[
        \mathscr{H}^1(\Xi_{n,i}) 
        \le 
        \mathscr{H}^1(\Sigma_{y_0,\varepsilon_n}\setminus \Gamma) = o(\varepsilon_n), 
    \]
    where the last equality comes from the blow-up theorem and the fact that $y_0$ is a flat point of both $\Sigma$ and $\Gamma$. 

    First we estimate the volume of the points at distance at most $\delta$ to $\Xi_{n,1}$. Indeed, we can decompose the set 
    \[
        A(\Xi_{n,1}, \delta)
        \eqdef 
        \left\{
            \dist(\cdot, \Xi_{n,1}) \le \delta            
        \right\}
        \subseteq 
        C(\Xi_{n,1}, \delta)
        \cup 
        H(\Xi_{n,1}, \delta),
    \]
    where $C((\Xi_{n,1}, \delta))$ is a tubular region around $\Xi_{n,1}$ and $H(\Xi_{n,1}, \delta)$ is a union of two hemispheres centered at its end-points. For the tubular region we have the bound 
    \[
        \varrho_n\left(C(\Xi_{n,1}, \delta)\right) 
        \le 
        \norm{\varrho_0}_\infty
        \mathscr{H}^1(\Xi_{n,1})\omega_{d-1} {\delta}^{d-1}.
    \]
    On the other hand, for the two hemispheres we have that 
    \[
        \varrho_n\left(H(\Xi_{n,1}, \delta)\right)  
        = 
        o(\varepsilon_n),   
    \]
    since either they are at minimal distance to $\Sigma$ outside of $B_{r_n}$, hence not in the support of $\varrho_n$, or their projection onto $\Sigma$ is contained in $\{y_{n,1}, y_{n,2}\} \cup \Sigma_{y_0,\varepsilon_n}\setminus\Gamma$. Hence 
    \[
        \varrho_n\left(H(\Xi_{n,1}, \delta)\right)  
        \le 
        \nu_n\left( \Sigma_{y_0,\varepsilon_n}\setminus\Gamma\right)
        = 
        o(\varepsilon_n), 
    \]
    and we have proven the first step induction towards~\eqref{eq.volume_estimate}. 

    To finish the proof define 
    \[
        C_k \eqdef \bigcup_{i = 1}^k \Xi_{n,i},
    \]
    assume that~\eqref{eq.volume_estimate} holds with $\Sigma_{y_0,\varepsilon_n}$ replaced by $C_k$ and let us show that it holds for $C_{k+1}$. In this case, we have that 
    \begin{align*}
        \varrho_n(A(C_{k+1}, \delta)) 
        &= 
        \varrho_n(A(C_{k}, \delta))
        + 
        \varrho_n(A(\Xi_{n,k+1}, \delta)\setminus A(C_{k}, \delta))\\
        &\le 
        \norm{\varrho_0}_\infty 
        \left[
            \omega_{d-1}\mathscr{H}^1(C_k) 
            {\delta}^{d-1}
            + 
            |A(\Xi_{n,k+1}, \delta)\setminus A(C_{k}, \delta)|
        \right] 
        + o(\varepsilon_n).
    \end{align*}
    Hence, let us estimate $|A(\Xi_{n,k+1}, \delta)\setminus A(C_{k}, \delta)|$. Once again, $A(\Xi_{n,k+1}, \delta)$ will have one tubular region and two hemispheres, but since $\Xi_{n,k+1}$ touches $C_k$, we can remove at least one ball of volume $\omega_d\delta^d$, which makes up for the two hemispheres. This way we have that 
    \begin{align*}
        \varrho_n(A(C_{k+1}, \delta)) 
        &\le 
        \norm{\varrho_0}_\infty 
        \omega_{d-1} {\delta}^{d-1}
        \left[
            \mathscr{H}^1(C_k) 
            + 
            \mathscr{H}^1(\Xi_{n,k+1}) 
        \right] 
        + o(r_n)\\
        &= 
        \norm{\varrho_0}_\infty 
        \omega_{d-1} {\delta}^{d-1}
        \mathscr{H}^1(C_{k+1}) 
        + o(\varepsilon_n).
    \end{align*}
    As a result, this estimate holds for every $k \in \mathbb{N}$, and since by construction $\mathscr{H}^1(C_k) \to \mathscr{H}^1(\Sigma_{y_0,\varepsilon_n})$, we obtain the bound~\eqref{eq.volume_estimate}. 

    Going back to~\eqref{eq.estimate3_preview}, and recalling that by definition $\frac{\varepsilon_n}{r_n} \to \varepsilon$ we obtain that
    \[
        \bar \sigma(C_{\delta,\varepsilon})
        \le 
        \liminf_{n \to \infty}
        \norm{\varrho_0}_\infty
        \frac{\H^1\left(\Sigma_{y_0,\varepsilon_n}\right)}{r_n}
        \omega_{d-1}{\delta}^{d-1}
        + 
        \frac{o(\varepsilon_n)}{r_n}
        = 
        2\omega_{d-1}\norm{\varrho_0}_\infty \varepsilon \delta^{d-1}.
    \]
    Taking the limit as $\delta \to 0$ and $\varepsilon = 1$, we obtain that $\bar \sigma(T_{y_0}\Sigma\cap \overline{B_1(0)}) = 0$.

    Finally, to prove item $(iv)$, recall the sequences $\sigma_n$ and $\nu_\star\mres B_{r_n}$, and let $\gamma_n$ be the optimal transportation plan between them. From Proposition~\ref{prop.projection_to_loops}, it follows that
    \[
        \supp \gamma_n\mres \mathbb{R}^d\times \Gamma 
        \subset 
        \text{graph}(\Pi_\Sigma),
    \]
    { 
    where $\Pi_\Sigma$ denotes the multivalued projection operator over $\Sigma$, \textit{i.e.}
    \[
        \Pi_\Sigma(x) \eqdef 
        \left\{
            y \in \Sigma: |x-y| = \dist(x,\Sigma)
        \right\}
        \text{ and }
        \text{graph}(\Pi_\Sigma) 
        \eqdef
        \left\{
            (x,y): y \in \Pi_\Sigma(x)
        \right\}.
    \]}
    Since $\bar \sigma_n, \bar \nu_n$ are generated by the push-forward of $\sigma_n$ and $\nu_\star\mres B_{r_n}$ by $\Phi^{y_0, r_n}$, the optimal transportation between them in given by the plan 
    \[
        \bar\gamma_n 
        \eqdef 
        \frac{1}{r_n}
        {\left(\Phi^{(y_0,y_0),r_n}\right)}_\sharp \gamma_n, 
        \text{ so that }
        \supp \left(
            \bar \gamma_n \mres 
            \left(
                \mathbb{R}^d\times \frac{\Gamma - y_0}{r_n} 
            \right)
        \right)
        \subset 
        \text{graph}
        \left(\Pi_{\frac{\Sigma - y_0}{r_n}}\right).
    \]
    If $\Sigmayrn$ was entirely contained in $\Gamma$, the proof would be strictly the same as in the analogous result from~\cite{chambolle2025one}. Here this is not the case, but the set part of $\Sigmayrn$ where the projection property might fail is small since $\mathscr{H}^1(\Sigmayrn \setminus \Gamma) = o(r_n)$.

    Up to a subsequence $\bar\gamma_n$ converges to some $\bar \gamma$, which, by the stability of optimal transportation plans, also transports $\bar \sigma$ to $\bar \nu$ optimally, let us show that $\supp \bar \gamma \subset \text{graph}\left(\Pi_{T_{y_0}\Sigma}\right)$. Notice that
    for any $A \subset \mathbb{R}^d$, we have that 
    \begin{align*}
        \bar\gamma_n\left(
            A\times 
            \left(
                \frac{\Sigmayrn\setminus\Gamma - y_0}{r_n}
            \right)
            \right)
            &\le 
            \frac{1}{r_n} 
            \nu_\star \left(
            \Sigmayrn\setminus\Gamma
            \right)
            = 
            \frac{o(r_n)}{r_n}
            \cvstrong{n \to \infty}{}0,
    \end{align*}
    since $\theta_1(\nu_\star,y_0) < +\infty$ and the tangent spaces of $\Sigma$ and $\Gamma$ coincide at $y_0$, from~\eqref{eq.assumptions_y_0}. 
    
    As a result, given $(x,p) \in \supp \bar \gamma$, there is an open ball $B$ centered at $(x,p)$ such that
    \[
        0 < \bar \gamma(B) \le \liminf_{n \to \infty} \bar\gamma_n(B) = \liminf_{n \to \infty} \bar\gamma_n\left(
            B \cap 
            \left(
                \mathbb{R}^d\times \frac{\Gamma - y_0}{r_n} 
            \right)
        \right) .
    \]
    In particular, we can find $\supp \bar \gamma_n \mres \left(
        \mathbb{R}^d\times \frac{\Gamma - y_0}{r_n} 
    \right) \ni (x_n, p_n) \xrightarrow[n \to \infty]{} (x,p)$. So it holds that
    \[
        |x - p| 
        = 
        \lim_{n \to \infty}   
        |x_n - p_n|
        =  
        \lim_{n \to \infty}   
        \dist\left(x_n, \frac{\Sigma - y_0}{r_n}\right) 
        = 
        \dist(x, T_{y_0}\Sigma),
    \]
    where the last equality comes from the point-wise convergence of the distance functions from Kuratowski convergence of blow-ups from Lemma~\ref{lemma.blowup_domain_measure}. 
\end{proof}

\subsection{Better competitor and absence of loops}\label{subsec.absence_loops}
We now implement Step 5 from Section~\ref{subsection.localization_blowup_arg} obtaining a contradiction to the fact that the optimal set $\Sigma$ contains a loop. Let us recall the construction done so far. {  Let $\Sigma$ be the support of an optimal measure for~\eqref{problem.shape_optimization_relaxed}, assume it contains a loop $\Gamma$ and we can choose a suitable non-cut point $y_0 \in \Gamma$, as in~\eqref{eq.def_y}.} Then we can perform the localizations around $y_0$ from the previous subsection and obtain the measures $\bar \sigma$ and $\bar \nu$, as in~\eqref{eq.limit_blowup_measures}. From Lemma~\ref{lemma.properties_Gamma_limit}, the latter is a minimizer of the functional $F$ defined in~\eqref{eq.limit_F} and
\[
    \bar \nu = \theta \H^1\mres T_{y_0}\Sigma \in \argmin F, 
    \text{ where }
    \theta = 2\theta_1(\nu_\star, y_0).
\]
As the optimal transportation from $\bar \sigma$ to $\bar \nu$ is attained by the projection map onto $T_{y_0}\Sigma$, we use a refined version of the argument done in~\cite[Lemma 6.3]{chambolle2025one} to construct a strictly better competitor to $F$. The further complexity of this case stems from the fact that we must remove all the mass of a small segment and create an advantageous structure, see Figure~\ref{figure.loops_competitor}. This construction will then contradict the existence of loops, so that any optimal $\Sigma$ must be a tree.  
\begin{figure}[t]
    \centering
    \tikzset{every picture/.style={line width=0.75pt}} 

\begin{tikzpicture}[x=0.75pt,y=0.75pt,yscale=-1,xscale=1]

\draw   (344,136.4) .. controls (344,82.61) and (387.61,39) .. (441.4,39) .. controls (495.19,39) and (538.8,82.61) .. (538.8,136.4) .. controls (538.8,190.19) and (495.19,233.8) .. (441.4,233.8) .. controls (387.61,233.8) and (344,190.19) .. (344,136.4) -- cycle ;
\draw  [fill={rgb, 255:red, 208; green, 2; blue, 27 }  ,fill opacity=1 ] (437,136.4) .. controls (437,133.91) and (438.97,131.9) .. (441.4,131.9) .. controls (443.83,131.9) and (445.8,133.91) .. (445.8,136.4) .. controls (445.8,138.89) and (443.83,140.9) .. (441.4,140.9) .. controls (438.97,140.9) and (437,138.89) .. (437,136.4) -- cycle ;
\draw    (441.4,136.4) ;
\draw  [dash pattern={on 4.5pt off 4.5pt}]  (389.8,23) -- (576.8,215) ;
\draw  [dash pattern={on 4.5pt off 4.5pt}]  (310.8,192) -- (500.8,20) ;
\draw  [dash pattern={on 4.5pt off 4.5pt}]  (321.8,85) -- (502.8,269) ;
\draw  [dash pattern={on 4.5pt off 4.5pt}]  (376.8,264) -- (578.8,84) ;
\draw [color={rgb, 255:red, 0; green, 0; blue, 0 }  ,draw opacity=1 ]   (441.4,136.4) -- (415.8,160) ;
\draw    (406.4,169.4) -- (392.99,181.95) ;
\draw [shift={(390.8,184)}, rotate = 316.9] [fill={rgb, 255:red, 0; green, 0; blue, 0 }  ][line width=0.08]  [draw opacity=0] (8.93,-4.29) -- (0,0) -- (8.93,4.29) -- cycle    ;
\draw    (473.8,178) -- (485.75,190.81) ;
\draw [shift={(487.8,193)}, rotate = 226.97] [fill={rgb, 255:red, 0; green, 0; blue, 0 }  ][line width=0.08]  [draw opacity=0] (8.93,-4.29) -- (0,0) -- (8.93,4.29) -- cycle    ;
\draw    (471.4,106.4) -- (484.68,93.12) ;
\draw [shift={(486.8,91)}, rotate = 135] [fill={rgb, 255:red, 0; green, 0; blue, 0 }  ][line width=0.08]  [draw opacity=0] (8.93,-4.29) -- (0,0) -- (8.93,4.29) -- cycle    ;
\draw    (405.8,106) -- (393.85,93.19) ;
\draw [shift={(391.8,91)}, rotate = 46.97] [fill={rgb, 255:red, 0; green, 0; blue, 0 }  ][line width=0.08]  [draw opacity=0] (8.93,-4.29) -- (0,0) -- (8.93,4.29) -- cycle    ;
\draw    (425.4,137) -- (441.4,136.4) ;
\draw    (441.4,136.4) -- (441.2,159.6) ;
\draw    (441.4,136.4) -- (462.2,137.6) ;
\draw    (441.2,118.6) -- (441.4,136.4) ;
\draw    (441.4,136.4) -- (461.2,118) ;
\draw [fill={rgb, 255:red, 208; green, 2; blue, 27 }  ,fill opacity=1 ]   (76.01,33.1) -- (76.01,88.54) ;
\draw [shift={(76.01,33.1)}, rotate = 270] [color={rgb, 255:red, 0; green, 0; blue, 0 }  ][line width=0.75]    (0,5.59) -- (0,-5.59)   ;
\draw    (76.01,195.38) -- (76.01,250.83) ;
\draw [shift={(76.01,250.83)}, rotate = 270] [color={rgb, 255:red, 0; green, 0; blue, 0 }  ][line width=0.75]    (0,5.59) -- (0,-5.59)   ;
\draw [color={rgb, 255:red, 208; green, 2; blue, 27 }  ,draw opacity=1 ] [dash pattern={on 0.84pt off 2.51pt}]  (76.01,88.54) -- (76.01,177.12) -- (76.01,195.38) ;
\draw [shift={(76.01,195.38)}, rotate = 270] [color={rgb, 255:red, 208; green, 2; blue, 27 }  ,draw opacity=1 ][line width=0.75]    (0,5.59) -- (0,-5.59)   ;
\draw [shift={(76.01,88.54)}, rotate = 270] [color={rgb, 255:red, 208; green, 2; blue, 27 }  ,draw opacity=1 ][line width=0.75]    (0,5.59) -- (0,-5.59)   ;
\draw [color={rgb, 255:red, 208; green, 2; blue, 27 }  ,draw opacity=1 ] [dash pattern={on 0.84pt off 2.51pt}]  (285.97,127.5) -- (78.78,127.5) ;
\draw [shift={(76.78,127.5)}, rotate = 360] [fill={rgb, 255:red, 208; green, 2; blue, 27 }  ,fill opacity=1 ][line width=0.08]  [draw opacity=0] (12,-3) -- (0,0) -- (12,3) -- cycle    ;
\draw [color={rgb, 255:red, 208; green, 2; blue, 27 }  ,draw opacity=1 ] [dash pattern={on 0.84pt off 2.51pt}]  (286.2,159.12) -- (79.01,159.12) ;
\draw [shift={(77.01,159.12)}, rotate = 360] [fill={rgb, 255:red, 208; green, 2; blue, 27 }  ,fill opacity=1 ][line width=0.08]  [draw opacity=0] (12,-3) -- (0,0) -- (12,3) -- cycle    ;
\draw [fill={rgb, 255:red, 208; green, 2; blue, 27 }  ,fill opacity=1 ] [dash pattern={on 4.5pt off 4.5pt}]  (204.64,33.1) -- (204.77,250.19) ;
\draw  [dash pattern={on 4.5pt off 4.5pt}]  (78.45,257.37) -- (203.25,257.37) ;
\draw [shift={(205.25,257.37)}, rotate = 180] [color={rgb, 255:red, 0; green, 0; blue, 0 }  ][line width=0.75]    (10.93,-3.29) .. controls (6.95,-1.4) and (3.31,-0.3) .. (0,0) .. controls (3.31,0.3) and (6.95,1.4) .. (10.93,3.29)   ;
\draw [shift={(76.45,257.37)}, rotate = 0] [color={rgb, 255:red, 0; green, 0; blue, 0 }  ][line width=0.75]    (10.93,-3.29) .. controls (6.95,-1.4) and (3.31,-0.3) .. (0,0) .. controls (3.31,0.3) and (6.95,1.4) .. (10.93,3.29)   ;
\draw [color={rgb, 255:red, 208; green, 2; blue, 27 }  ,draw opacity=1 ]   (76.01,88.54) -- (129.6,88.8) ;
\draw [color={rgb, 255:red, 208; green, 2; blue, 27 }  ,draw opacity=1 ] [dash pattern={on 0.84pt off 2.51pt}]  (77.76,89.52) -- (203.6,159.8) ;
\draw [shift={(76.01,88.54)}, rotate = 29.18] [fill={rgb, 255:red, 208; green, 2; blue, 27 }  ,fill opacity=1 ][line width=0.08]  [draw opacity=0] (12,-3) -- (0,0) -- (12,3) -- cycle    ;
\draw [color={rgb, 255:red, 208; green, 2; blue, 27 }  ,draw opacity=1 ] [dash pattern={on 0.84pt off 2.51pt}]  (99.49,90.44) -- (203.6,125.8) ;
\draw [shift={(97.6,89.8)}, rotate = 18.76] [fill={rgb, 255:red, 208; green, 2; blue, 27 }  ,fill opacity=1 ][line width=0.08]  [draw opacity=0] (12,-3) -- (0,0) -- (12,3) -- cycle    ;
\draw   (101.2,87.8) .. controls (101.2,84.51) and (99.55,82.86) .. (96.26,82.86) -- (96.26,82.86) .. controls (91.55,82.86) and (89.2,81.21) .. (89.2,77.92) .. controls (89.2,81.21) and (86.85,82.86) .. (82.14,82.86)(84.26,82.86) -- (82.14,82.86) .. controls (78.85,82.86) and (77.2,84.51) .. (77.2,87.8) ;
\draw   (128.2,64.6) .. controls (128.27,59.93) and (125.98,57.56) .. (121.31,57.49) -- (113.43,57.37) .. controls (106.76,57.26) and (103.47,54.88) .. (103.54,50.21) .. controls (103.47,54.88) and (100.1,57.16) .. (93.43,57.05)(96.43,57.1) -- (84.31,56.91) .. controls (79.64,56.84) and (77.27,59.13) .. (77.2,63.8) ;

\draw (443.4,139.8) node [anchor=north west][inner sep=0.75pt]  [font=\footnotesize]  {$\mathbb{R} e_{d}$};
\draw (306,211.4) node [anchor=north west][inner sep=0.75pt]  [font=\scriptsize]  {$ \begin{array}{l}
\ \ \ \ \ \ \overline{\theta }_{i}  >0\\
\overline{\sigma }_{i}\left( C_{\varepsilon }^{i}\right) \approx \overline{\theta }_{i} \varepsilon 
\end{array}$};
\draw (400.6,180.1) node [anchor=north west][inner sep=0.75pt]  [font=\footnotesize,color={rgb, 255:red, 0; green, 0; blue, 0 }  ,opacity=1 ]  {$\xi _{i}$};
\draw (496,215.4) node [anchor=north west][inner sep=0.75pt]  [font=\scriptsize]  {$ \begin{array}{l}
\ \ \ \ \ \ \overline{\theta }_{j} =0\\
\overline{\sigma }_{j}\left( C_{\varepsilon }^{j}\right) =o( \varepsilon )
\end{array}$};
\draw (514,31.4) node [anchor=north west][inner sep=0.75pt]  [font=\scriptsize]  {$ \begin{array}{l}
\ \ \ \ \ \ \overline{\theta }_{i'}  >0\\
\overline{\sigma }_{i'}\left( C_{\varepsilon }^{i'}\right) \approx \overline{\theta }_{i'} \varepsilon 
\end{array}$};
\draw (298,25.4) node [anchor=north west][inner sep=0.75pt]  [font=\scriptsize]  {$ \begin{array}{l}
\ \ \ \ \ \ \overline{\theta }_{j'} =0\\
\overline{\sigma }_{j'}\left( C_{\varepsilon }^{j'}\right) =o( \varepsilon )
\end{array}$};
\draw (13.2,38.12) node [anchor=north west][inner sep=0.75pt]  [font=\footnotesize]  {$[ -e_{d} ,e_{d}]$};
\draw (126.65,261.44) node [anchor=north west][inner sep=0.75pt]  [font=\footnotesize]  {$L/2$};
\draw (45.98,121.07) node [anchor=north west][inner sep=0.75pt]  [font=\scriptsize,color={rgb, 255:red, 74; green, 144; blue, 226 }  ,opacity=1 ]  {$\overline{s} +t$};
\draw (44.76,152.34) node [anchor=north west][inner sep=0.75pt]  [font=\scriptsize,color={rgb, 255:red, 74; green, 144; blue, 226 }  ,opacity=1 ]  {$\overline{s} -t$};
\draw (223.5,34.99) node [anchor=north west][inner sep=0.75pt]  [font=\scriptsize]  {$\overline{\theta }_{i}  >0$};
\draw (63.49,136.23) node [anchor=north west][inner sep=0.75pt]  [font=\scriptsize,color={rgb, 255:red, 74; green, 144; blue, 226 }  ,opacity=1 ]  {$\overline{s}$};
\draw (78.01,64.22) node [anchor=north west][inner sep=0.75pt]  [font=\scriptsize]  {$\ell _{i}( t)$};
\draw (92,35.4) node [anchor=north west][inner sep=0.75pt]  [font=\scriptsize]  {$\ell _{i}( \varepsilon )$};

\end{tikzpicture}
    \caption{Construction of a better competitor in Theorem~\ref{theorem.no_loops_integrable_regime}. On the right, the partition of the space into sections. For sections $i,i'$ such that $\bar \theta_i,\bar \theta_{i'}>0$ we add a segment in their direction. For $\bar \theta_j,\bar \theta_{j'}=0$ we construct a Dirac mass. On the cases of positive density we have a gain of order $\varepsilon^2$ in transportation cost, for zero density we lose $o(\varepsilon^2)$. On the left the transportation strategy of each section of the partitioned space.}\label{figure.loops_competitor}
\end{figure}
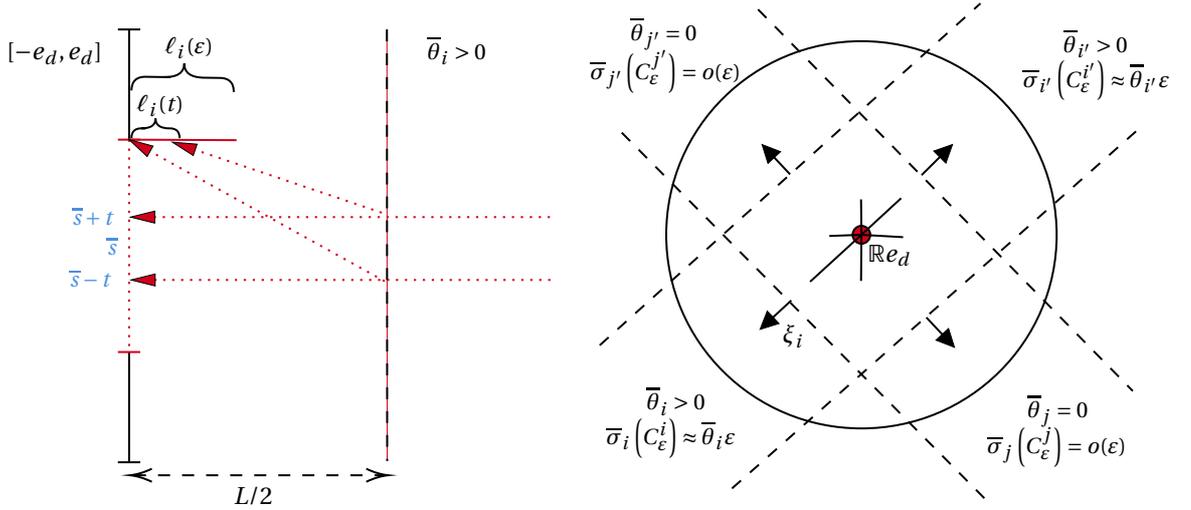

\begin{theorem}\label{theorem.no_loops_integrable_regime}
    { 
    Assume that $\varrho_0$ is an atomic measure, as in Case 1. Then the support $\Sigma$ of any solution to the relaxed problem~\eqref{problem.shape_optimization} is a tree, in the sense that it does not contain homeomorphic images of $\mathbb{S}^1$.
    
    In the Case 2, that is $\varrho_0 \in L^\infty(\Omega)$ is given by a bounded density with compact support, assume that either $d\ge 3$ and $p \ge 1$, or that $d = 2$ and $p>1$. Then the original problem~\eqref{problem.shape_optimization} admits solutions $\Sigma$, and any such optimal network is a tree.

    In Case 3, where $\varrho_0$ is a mixture between a bounded density and an atomic measure, if either $d\ge 3$ and $p \ge 1$ or if $d = 2$ and $p>1$, the support of any solution to the relaxed problem is a tree. }
\end{theorem}
\begin{proof}
    Suppose by contradiction that $\Sigma$ is optimal and contains a loop, and let $y_0$ be a flat non-cut point inside this loop, chosen as in~\eqref{eq.def_y}. Up to a rotation, we may assume that $T_{y_0}\Sigma = \mathbb{R}^d e_d$, where ${(e_i)}_{i = 1}^d$ is a basis of ${\mathbb{R}}^d$. We will start with a simpler construction for Case 1; and then use it as a building block for the second one. 
    
    \textbf{\underline{Case 1:}} Recall that $\supp \bar\sigma \subset
    \left\{x = (x',x_d) \in 
    {\mathbb{R}}^d : |x'|> L,\  |x_d|\le 1 \right\}$, as shown in item (i) of Lemma~\ref{lemma.properties_Gamma_limit},
    so we can cover its support with finitely many sets ${(E_i)}_{i=1}^N$ defined as:
    \[
        E_i
        \eqdef 
        \left\{
            x = (x',x_d) \in {\mathbb{R}}^d : 
                 \inner{\xi_i, x} > L/2, 
                 \  |x_d| \le 1
        \right\}
    \]
    where $\xi_i \in \mathbb{S}^{d-1} \cap {[e_d]}^\perp$ are unit
    vectors and $N$ depends only on the dimension.
    We then define a disjoint family
    \[
        F_1 = E_1, \quad F_{i+1} = E_{i+1}\setminus \bigcup_{j=1}^i F_i \text{ for $i\ge 1$}
    \]
    and decompose our measures $\bar \sigma$ and $\bar \nu$ as 
    \[
        \bar \sigma  = \sum_{i = 1}^N \bar \sigma_i, \ 
        \bar \nu     = \sum_{i = 1}^N \bar \nu_i
        \text{ where } 
        \bar \sigma_i  \eqdef \bar \sigma \mres F_i 
        \text{ and }
        \bar \nu_i \eqdef {(\proj_d)}_\sharp\bar \sigma_i,
    \]
    where $\proj_d : x\mapsto x_d e_d$ is the projection onto the vertical axis.
    By Besicovitch's differentiation theorem, $\bar\nu_i = \theta_i \H^1\mres[-e_d, e_d]$, where $\theta_i(s) = \theta_i(s e_d)\ge 0$ sum up to a positive constant 
    \[
        \sum_{i = 1}^N \theta_i(s) = \theta > 0.    
    \]
    
    In the sequel, we introduce the  notation: $\mathbb{R}^d \ni x = (x_i, x''_i, x_d)$ where $x_i = \inner{\xi_i,x}$ is the component of $x$ parallel to $\xi_i$ and $x''_i \in {[\xi_i, e_d]}^{\perp}$. Defining the sets 
    \begin{equation}\label{eq.box_set_covering}
        C_{t}^i 
        \eqdef 
        F_i \cap \{ x \in \mathbb{R}^d :  |x_d - \bar s|\le t \}
        \subset \left\{
            x = (x_i, x''_i, x_d) : 
            \substack{
                \displaystyle
                x_i > L/2, \\
                \displaystyle
                |x_d - \bar s|\le t
            }
        \right\},
    \end{equation}
    and letting $\bar s\in (-1,1)$ be a common Lebesgue point of all $\theta_i$,
    $i=1,\dots, N$, it follows from the fact that ${(\proj_d)}_\sharp \bar \sigma_i = \theta_i\H^1\mres [-e_d,e_d]$ that, for every $i = 1,\dots,N$
    \begin{equation}
        \frac{\bar \sigma_i(C_\varepsilon^i)}{2\varepsilon} = 
        \frac{1}{2\varepsilon} \int_{\bar s-\varepsilon}^{\bar s+\varepsilon}\theta_i(t)\dd t 
        \xrightarrow[\varepsilon \to 0]{} 
        \theta_i(\bar s).
    \end{equation}

    Consider now the two subfamilies of indexes
    \begin{equation}\label{eq.indexes_positive_mass}
        I_1
        \eqdef 
        \{i : \ \theta_i(\bar s) > 0\}, \quad 
        I_2
        \eqdef 
        \{i : \ \theta_i(\bar s) = 0\}. 
    \end{equation}
    In particular, for each $i \in I_1$, there is a constant $\bar \theta_i > 0$ and $\varepsilon > 0$ such that for $t < \varepsilon$ we have
    \begin{equation}\label{eq.condition_t_noloop}
        \frac{1}{\bar\theta_i}
        \le 
        \frac{\bar \sigma_i(C_{t}^i)}{t}
        \le 
        \bar\theta_i.
    \end{equation}

    Now let us exploit the fact that, from Lemma~\ref{lemma.properties_Gamma_limit} the optimal transport is given by projections to propose a new transportation map, sending the mass in $C_{\varepsilon}^i$ to a segment pointing towards $\xi_i$:
    \[
        \bar T(x)
        \eqdef 
        \begin{cases}
            \ell_i(|x_d-\bar s|)\xi_i + (\bar s + \varepsilon)e_d,& 
            \text{ if $x \in C_{\varepsilon}^i$ and $i \in I_1$},\\ 
            (\bar s + \varepsilon)e_d, & 
            \text{ if $x \in C_{\varepsilon}^i$ and $i \in I_2$,}\\
            \proj_d(x),& \text{ otherwise,}
        \end{cases}
    \]
    where, for each $i \in I_1$, we define $\ell_i:[0,\varepsilon] \to \mathbb{R}_+$ via the conservation of mass relation 
    \begin{equation}\label{eq:conservmass_noloop}
        \ell_i(t) = \alpha \bar\sigma_i(C_{t}^i),
    \end{equation}
    {  where we recall $\alpha = \mathcal{L}(\nu_\star)$.} In other words, the mass that was sent to the vertical segment $[\bar s-\varepsilon', \bar s+\varepsilon']e_d$ is now used to form the horizontal segments
    \[
        L_i \eqdef (\bar s + \varepsilon) e_d  +[0, \ell_i(\varepsilon)]\xi_i,
    \]
    for each $i\in I_1$. The mass corresponding to the remaining indexes form a Dirac measure concentrated in $(\bar s + \varepsilon) e_d$, but with a mass of order $o(\varepsilon)$. 
   
    Thanks to~\eqref{eq:conservmass_noloop}, the map $\bar T$ sends $\bar \sigma_i\mres C_\varepsilon^i$ to
    the measure $\alpha^{-1}\H^1\mres L_i$, hence the transported measure $\bar T_\sharp \bar \sigma$ satisfies the constraints in the definition~\eqref{eq.limit_F} of the limiting functional $F$, since the newly added structure, given by 
    \[
        \Sigma' = \bigcup_{i \in I_1} L_i, 
    \]
    is a connected set. As a result, one has that $F(\bar T_\sharp \bar\sigma)<+\infty$. 
    
    { 
    So for $i \in I_1$ and $x \in C_\varepsilon^i$, using the inclusion~\eqref{eq.box_set_covering}, the upper and lower bounds on $\bar\sigma_i(C^i_t)$ from~\eqref{eq.condition_t_noloop}, and recalling the notation $x = (x_i, x''_i, x_d)$ we have that}
    \begin{align*}
        |x - \proj_d(x)|^2 - |x-  \bar T(x)|^2 
        &= 
        x_i^2 + |x''_i|^2 - 
        {(x_i-\ell_i(|x_d-\bar s|))}^2 - |x''_i|^2 - {(x_d- \bar s - \varepsilon)}^2
        \\
        &=
        2x_i\ell_i(|x_d-\bar s|) - {\ell_i(|x_d-\bar s|)}^2 - {(x_d-\bar s)}^2
        + 2\varepsilon|x_d - \bar s| - \varepsilon^2
        \\ 
        &\ge 
        \frac{L\alpha}{\bar \theta_i}|x_d - \bar s| 
        - 
        (\alpha \bar\theta_i |x_d - \bar s|)^2 
        +
        2\varepsilon|x_d - \bar s| 
        - 
        \varepsilon^2\\ 
        &\ge
        \left(
            \frac{L\alpha}{\bar \theta_i} + 2\varepsilon
        \right)
        |x_d - \bar s| 
        - 
        \left(
            2 + \alpha^2 \bar \theta_i^2
        \right)\varepsilon^2. 
    \end{align*}
    
    This is a {  quantitative} estimate on the difference of the squared distance, to extend it to the $p$-power, we use that for any $a,b > 0$ {  taken on a compact set we have}
    \begin{equation}\label{eq.taylor}
        a^{p/2} - b^{p/2} = \frac{p}{2}b^{\frac{p}{2}-1}(a - b) + o(a - b),
    \end{equation}
    {  where the $o(\cdot)$ depends on the compact. So our goal is to take $a = |x - \proj_d(x)|^2$ and $b = |x - \bar T(x)|^2$, since 
    $|x - \bar T(x)|^2 \ge {(x_i - \ell_i(|x_d - \bar s|))}^2$ and over $C^i_\varepsilon$ we have $x_i \ge L/2$ and $\ell_i(|x_d - \bar s|) \le \alpha \bar \theta_i \varepsilon$, taking $\varepsilon$ small enough we have $|x - \bar T(x)| \ge L/4$. Since $a,b$ taken as above are uniformly bounded since the transport takes place inside a compact set, there exists a constant $K$ depending on $p,\bar\theta_i$ and $\alpha$ such that
    \begin{align*}
        |x - \proj_d(x)|^p - |x - \bar T(x)|^p 
        &\ge
        LK\left(
            |x - \proj_d(x)|^2 - |x - \bar T(x)|^2
        \right)
        + o(\varepsilon)\\ 
        &\ge 
        LK|x_d - \bar s|
        + o(\varepsilon),
    \end{align*}
    where we have replaced $o(a-b)$ with a $o(\varepsilon)$ since for $x \in C^i_\varepsilon$, $\proj_d(x)$ and $\bar T(x)$ are both sent to the same interval of $\mathbb{R}e_d$ with length $2\varepsilon$ so that $a - b = O(\varepsilon)$. 
    }
    
    Next, we notice that there exists $n_i \in \mathbb{N}$, to be fixed later, such that for any $x \in C_\varepsilon^i \setminus C_{\frac{\varepsilon}{n_i}}$ we have that $|x_d - \bar s| \ge \frac{\varepsilon}{n_i}$. Hence, integrating with respect to $\bar \sigma_i$ over $C_\varepsilon^i$, and using~\eqref{eq.condition_t_noloop} to estimate the integral over $C_{\frac{\varepsilon}{n_i}}$ we have in particular that $\bar \sigma(C_{\frac{\varepsilon}{n_i}}) \gtrsim \varepsilon$, yielding
    \begin{align*}
        \int_{C_\varepsilon^i}
        &
        \left(|x - \proj_d(x)|^p - |x - \bar T(x)|^p \right)
        \dd \bar \sigma_i
        \ge 
        {  LK} \int_{C_\varepsilon^i \setminus C^i_{\frac{\varepsilon}{n_i}}}|x_d - \bar s|\dd \bar \sigma_i +  o(\varepsilon^2) \\ 
        &\ge 
        {  LK}\frac{\varepsilon}{n_i}
        \bar \sigma_i
        \left(
            C_\varepsilon^i \setminus C_{\frac{\varepsilon}{n_i}}^i
        \right)
        +  
        o(\varepsilon^2)
        =
        {  LK}\frac{\varepsilon}{n_i}
        \left(
            \bar \sigma_i
            \left(
                C_\varepsilon^i
            \right)
            - 
            \bar \sigma_i
            \left(
                C_{\frac{\varepsilon}{n_i}}^i
            \right)
        \right)
        +  
        o(\varepsilon^2)\\ 
        &\ge 
        \frac{{  LK}}{n_i}
        \left(
            \frac{1}{\bar \theta_i}
            - 
            \frac{\bar \theta_i}{n_i}
        \right)
        \varepsilon^2
        +  
        o(\varepsilon^2)
        \ge 
        \frac{{  LK}}{2\bar \theta_i n_i}
        \varepsilon^2
        +  
        o(\varepsilon^2),
    \end{align*}
    where in the last inequality we choose $n_i \ge 2\bar \theta_i^2$. 

    For the indexes $i \in I_2$, we observe that the error committed by using the map $\bar T$ is given by $|x - \proj_d(x)|^2 - |x - \bar s e_d|^2 = - {(x_d - \bar s)}^2 \ge - \varepsilon^2$. So using once again~\eqref{eq.taylor} we get that 
    \[
        |x - \proj_d(x)|^p - |x - \bar s e_d|^p
        \ge 
        -{  LK}\varepsilon^2 + o(\varepsilon^2).  
    \]
    Now setting $\nu' \eqdef \bar T_\sharp \bar \sigma$, we obtain that 
    \begin{align*}
        W_p^p(\bar \sigma, \bar \nu) 
        - 
        W_p^p(\bar \sigma, \nu') 
        &\ge
        \int\left(
            |x - \proj_d(x)|^p - |x - \bar T(x)|^p
        \right)\dd \bar \sigma\\
        &= 
        \sum_{i = 1}^N 
        \int_{C_\varepsilon^i}
        \left(
            |x - \proj_d(x)|^p - |x - \bar T(x)|^p
        \right)\dd \bar \sigma_i\\ 
        &\ge 
        {  LK}
        \left(
            \sum_{i \in I_1} 
            \left(
                \frac{1}{2\bar \theta_i n_i} \varepsilon^2 + o(\varepsilon^2)
            \right)
            - 
            \sum_{i \in I_2} 
            (\varepsilon^2 + o(\varepsilon^2)) 
            \bar \sigma_i(C_\varepsilon^i)
        \right)\\ 
        &= 
        {  LK}\varepsilon^2
        \left(
            \sum_{i \in I_1} 
            \left(
                \frac{1}{2\bar \theta_i n_i} + \frac{o(\varepsilon^2)}{\varepsilon^2}
            \right) 
            - 
            \sum_{i \in I_2} 
            \left(
                1 + \frac{o(\varepsilon^2)}{\varepsilon^2}
            \right) 
            \bar \sigma_i(C_\varepsilon^i)
        \right).
    \end{align*}
    The last quantity must be positive for $\varepsilon$ large enough since $\bar\sigma_i(C_\varepsilon^i) = o(\varepsilon)$, for each $i \in I_2$. But as the new competitor $\nu'$ is admissible for the minimization of $F$, we obtain a contradiction with the fact that $\bar \nu$ is a minimizer from Lemma~\ref{lemma.properties_Gamma_limit}. This contradicts the entire construction, meaning that $\Sigma$ does not contain a loop.

    The proof of Case 3 is a combination of the argument above and the strictly stronger argument of Case 2 that is described below. Since the adaptation is quite direct by taking $\delta$ and $\varepsilon$ sufficiently small, depending on the distribution of atoms ${(x_i)}_i$ and the point $y_0$ around which we perform the blow-up construction. Therefore, we only provide the details of the second case.  

    \textbf{\underline{Case 2:}} {  This second case is more delicate since the support of $\bar \sigma$ is no longer from a minimal distance $L$ from the tangent space $T_x\Sigma$. However, from Lemma~\ref{lemma.properties_Gamma_limit} we know have explicit estimates on its mass over cylinders around it, that is 
    \[
        \bar\sigma(C_{\delta,\varepsilon}) \le C \varepsilon \delta^{d-1},
    \]
    where as in Lemma~\ref{lemma.properties_Gamma_limit} the set $C_{\delta,\varepsilon}$ is defined as
    \[
        C_{\delta,\varepsilon}
        \eqdef
        \left\{
            x = (x',x_d) \in {\mathbb{R}}^d : 
            \dist(x, T_{y_0}\Sigma) \le \delta,
            |x_d| \le \varepsilon
        \right\},
    \]
    and $C_{\delta} \eqdef C_{\delta,1}$.
    }

    Next, we perform a similar construction from the one in the previous case, but this time we define
    \[
        E_i
        \eqdef 
        \left\{
            x = (x',x_d) \in {\mathbb{R}}^d : 
                 \inner{\xi_i, x} > \delta/2, 
                 \  |x_d| \le 1
        \right\} 
        \setminus C_\delta.
    \]
    As in Case 1 we can define 
    \[
        F_1 \eqdef E_1, \quad  
        F_i = E_i\setminus F_{i-1}, \quad 
        F_0 \eqdef \mathbb{R}^d\setminus \left(\bigcup_{i = 1}^N F_i\right)
    \]
    and the measures 
    \[
        \bar \sigma_i \eqdef \bar \sigma \mres F_i, \quad 
        \bar \nu_i \eqdef {\left[\proj_d\right]}_\sharp \bar \sigma_i, \quad
        \text{ for } i = 0,\dots, N
    \]
    so that in particular we have that $\displaystyle \bar \sigma = \sum_{i = 0}^N \bar \sigma_i$ and $\displaystyle \bar \nu = \sum_{i = 0}^N \bar \nu_i$. In particular, each $\bar \nu_i$ is rectifiable being written as $\bar \nu_i = \theta_i \H^1\mres [-e_d, e_d]$, and it holds that $\displaystyle \sum_{i = 0}^N \theta_i = \theta$.

    One again, we consider a Lebesgue point $\bar s$ of all densities $\theta_i$ and $\varepsilon_0$ small enough so that for any $\varepsilon < \varepsilon_0$ the equivalent of~\eqref{eq.condition_t_noloop} holds for all $i = 1,\dots,N$. We also recall the sets of indexes $I_1$ and $I_2$ from~\eqref{eq.indexes_positive_mass}, distinguishing the ones with positive density, $\theta_i > 0$ for $i \in I_1$ and $\theta_i = 0$ for $i \in I_2$. {  Since $\bar s$ can be chosen as close to $0$ as we want, to simplify our argument  we assume without loss of generality that $\bar s = 0$.}

    { 
    Finally, we construct the better competitor. For the indexes $i \in I_1$, we construct a segment $L_i$, while for the indexes $i \in I_2$ we send the mass of $\bar \sigma_i$ to a Dirac measure, as in Case 1. The remaining mass inside the cylinder $C_\delta$ also sent to this Dirac measure concentrated at 
    \[
        y_\varepsilon \eqdef \varepsilon e_d, \text{ with total mass } 
        m_\varepsilon \eqdef \bar \sigma (C_{\varepsilon,\delta}) + \sum_{i \in I_2} \bar \sigma_i(C^i_\varepsilon). 
    \]
    Accounting for all these conditions, the new transportation map becomes
    \[
        \bar T(x)
        \eqdef 
        \begin{cases}
            \ell_i(|x_d-\bar s|)\xi_i + (\bar s + \varepsilon)e_d,& 
            \text{ if $x \in C_{\varepsilon}^i$ and $i \in I_1$},\\ 
            y_\varepsilon, & 
            \text{ if $x \in C_{\varepsilon}^i$ and $i \in I_2$, or if $x \in C_{\delta,\varepsilon}$}\\
            \proj_d(x),& \text{ otherwise.}
        \end{cases}
    \]

    Notice that, in order for the construction from Case 1 to work, we need $\delta$ to be larger that at least $2\varepsilon$. This will be enforced by our choice of $\delta$ later on. The new competitor then becomes
    \[
        \nu' 
        \eqdef 
        m_\varepsilon \delta_{y_\varepsilon}
        +
        \sum_{i = 1}^N\alpha^{-1}\H^1\mres L_{i}.
    \]
    As a result, we can estimate the gain in transportation distance by dividing the difference in transportation as 
    \begin{equation}\label{eq.case2_decomposition_trans_gain}
        W_p^p(\bar \sigma, \bar \nu) -  W_p^p(\bar \sigma, \nu')
        =
        {(\Delta W)}_1 + {(\Delta W)}_2 + {(\Delta W)}_3,
    \end{equation}
    where setting $\Delta_p(x) \eqdef |x - \proj_d(x)|^p - |x - \bar T(x)|^p$, each term $\Delta_i$ is defined as 
    \begin{align*}
        {(\Delta W)}_1 
        \eqdef 
        \sum_{i \in I_1} \int_{C_\varepsilon^i} \Delta_p(x) \dd \bar \sigma, \quad
        {(\Delta W)}_2 \eqdef
        \sum_{i \in I_2} \int_{C_\varepsilon^i} \Delta_p(x) \dd \bar \sigma, \quad
        {(\Delta W)}_3 \eqdef
        \int_{C_{\delta,\varepsilon}} \Delta_p(x) \dd \bar \sigma.
    \end{align*}
    In the sequel, we estimate each of these terms similarly to Case 1,
    so we skip some details in the computations.
    
    For $i \in I_1$ and $x\in C_\varepsilon^i$ we can use the same computation to estimate the difference between the quadratic distances:
    \begin{equation}\label{eq.case2_I1_D2_bound}
        \Delta_2(x) = 
        |x - \proj_d(x)|^2 - |x-  \bar T(x)|^2 
        \ge
        C_1\delta
        |x_d - \bar s| 
        - 
        C_2\varepsilon^2,
    \end{equation}
    for some constants $C_1,C_2 > 0$ which depend only on $\alpha$ and $\bar \theta_i$. The difference is the the dependence on $L$ is now replaced by a linear term in $\delta$. To get a bound on $D_p(x)$, we use a slightly different strategy from Case 1. Since now the constant $L$ becomes $\delta$ which will also go to 0, the gain from the indexes in $I_1$ will be significantly smaller. So we distinguish when $p \ge 2$, when the function $t \mapsto t^{p/2}$ is convex so that we can exploit convexity, and the case $1 \le p < 2$, where actually the fact that $|x - \proj_d(x)|, |x-\bar T(x)| \le 2$ help us but instead of a Taylor expansion, using the mean value theorem. This gives 
    \begin{equation}
        \Delta_p(x) 
        \ge 
        \begin{cases}
            C \Delta_2(x),& \text{ for } 1 \le p < 2,\\ 
            C \delta^{p-2}\Delta_2(x),& \text{ for } p \ge 2.
        \end{cases}
    \end{equation}
    In in case 1, integrating over each $C_\varepsilon^i$ and up to changing the constants $C_1, C_2$, we obtain a lower bound for ${(\Delta W)}_1$ as 
    \begin{equation}\label{eq.case_2DW1}
        {(\Delta W)}_1 \ge 
        \begin{cases}
            C_1 \delta \varepsilon^2 - C_2 \varepsilon^3,& \text{ for } 1 \le p < 2,\\ 
            C_1 \delta^{p-1} \varepsilon^2 - C_2 \delta^{p-2}\varepsilon^3,& \text{ for } p \ge 2.  
        \end{cases}
    \end{equation}
    
    For $i \in I_2$ and $p \ge 2$, as before we have that 
    \begin{equation}\label{eq.case2_I2_dist_bound}
        \Delta_2(x) = 
        |x - \proj_d(x)|^2 - |x - y_\varepsilon|^2 \ge - \varepsilon^2,
    \end{equation}
    which can also be turned into a lower bound of $\Delta_p(x)$ by means of an application of the mean value theorem, finding some $d = \lambda|x - \proj_d(x)|^2 + (1-t)|x - y_\varepsilon|^2$ such that $\Delta_p(x) = d^{\frac{p}{2}-1}\Delta_2(x)$. As a result we have $\delta^{p-2} \le d \le C$ which gives us a bound as 
    \begin{equation}\label{eq.case2_I2_distp_bound}
        \Delta_p(x) \ge  
        \begin{cases}
           - C_3\delta^{p-2} \varepsilon^2,& \text{ if $1 \le p < 2$},\\ 
           - C_3\varepsilon^2,& \text{ if $p \ge 2$.}
        \end{cases}
    \end{equation}
    Recalling that $\bar \sigma(C^i_\varepsilon) = o(\varepsilon)$, this gives the following estimate on the loss in terms of transportation distance
    \begin{equation}\label{eq.case_2DW2}
        {(\Delta W)}_2 \ge 
        \begin{cases}
            - C_3\delta^{p-2} \varepsilon^2 o(\varepsilon),& \text{ for } 1 \le p < 2,\\ 
            - C_3\varepsilon^2 o(\varepsilon),& \text{ for } p \ge 2.  
        \end{cases}
    \end{equation}

    Finally, we can estimate the loss in transportation associated with only considering the mass outside the cylinder very easily as 
    \begin{equation}\label{eq.case_2DW3}
        |{(\Delta W)}_3| \le 
        \int_{C_{\delta,\varepsilon}} |x - y_\varepsilon|^p \dd \bar \sigma 
        \le 
        C_4{(\delta^2 + 4\varepsilon^2)}^{\frac{p}{2}}
        \bar \sigma(C_{\delta,\varepsilon}) 
        \le 
        C_4{(\delta^2 + 4\varepsilon^2)}^{\frac{p}{2}}
        \delta^{d-1}\varepsilon.
    \end{equation}
    
    To finish the proof, we insert the estimates~\eqref{eq.case_2DW1},\eqref{eq.case_2DW2},\eqref{eq.case_2DW3} into~\eqref{eq.case2_decomposition_trans_gain}. In the case $p \ge 2$, we obtain the following lower bound on the difference of transportation costs
    \begin{equation}\label{eq.case2_decomposition_trans_gain_p_ge2}
        W_p^p(\bar \sigma, \bar \nu) -  W_p^p(\bar \sigma, \nu')
        \ge 
        C_1 \delta^{p-1} \varepsilon^2 - C_2 \delta^{p-2}\varepsilon^3
        - C_3\varepsilon^2 o(\varepsilon)
        - C_4{(\delta^2 + 4\varepsilon^2)}^{\frac{p}{2}}
        \delta^{d-1}\varepsilon. 
    \end{equation}
    Since we can only possibly improve the transportation cost with the term $(\Delta W)_1$, the goal is to choose $\delta$ and $\varepsilon$ so that the exponent of the first term above is smaller than the remaining terms. 
    When $p \ge 2$, it is convenient to choose $\delta = \kappa \varepsilon$, for a constant $\kappa$ sufficiently large. First, taking $\kappa > 1$ makes the previous construction feasible as we needed $\delta \ge \varepsilon$, but with this choice the gain in transportation cost becomes 
    \[
        W_p^p(\bar \sigma, \bar \nu) -  W_p^p(\bar \sigma, \nu')
        \ge 
        C_1 \kappa^{p-1} \varepsilon^{p+1} - 
        C_2 \kappa^{p-2} \varepsilon^{p+1}- C_3\varepsilon^2 o(\varepsilon)
        - C_4{(\kappa^2 + 4)}^{\frac{p}{2}}\kappa^{d-1}\varepsilon^{p + d}. 
    \]
    Taking $\varepsilon$ small enough and $\kappa$ big enough, the right-hand side above can be made strictly positive, which gives a contraction to the entire construction and implies that $\Sigma$ cannot have loops in the case $p \ge 2$.

    In the regime $1 \le p < 2$, inserting~\eqref{eq.case_2DW1},\eqref{eq.case_2DW2},\eqref{eq.case_2DW3} into~\eqref{eq.case2_decomposition_trans_gain} gives 
    \begin{equation}\label{eq.case2_decomposition_trans_gain_1<p<2}
        W_p^p(\bar \sigma, \bar \nu) -  W_p^p(\bar \sigma, \nu')
        \ge 
        C_1 \delta \varepsilon^2 - C_2 \varepsilon^3 - 
        C_3\delta^{p-2} \varepsilon^2 o(\varepsilon) -
        C_4{(\delta^2 + 4\varepsilon^2)}^{\frac{p}{2}}
        \delta^{d-1}\varepsilon.
    \end{equation}
    Now, it is convenient to choose $\delta$ of the form $\delta = \varepsilon^\beta$,  for a good exponent $\beta > 0$ to be chosen. Then~\eqref{eq.case2_decomposition_trans_gain_1<p<2} becomes 
    \[
        W_p^p(\bar \sigma, \bar \nu) -  W_p^p(\bar \sigma, \nu')
        \ge 
        C_1 \varepsilon^{2+\beta} - C_2 \varepsilon^3 - 
        C_3\varepsilon^{(p-2)\beta} \varepsilon^3 \frac{o(\varepsilon)}{\varepsilon} -
        C_4\varepsilon^{(d-1+p)\beta + 1}\varepsilon.
    \]
    As a result, as long as the exponent of the first term is smaller than the others, we can make the right-hand side strictly positive by taking $\varepsilon$ small enough. This is equivalent to choosing $\beta$ such that 
    \begin{align*}
        2+\beta < 3,\quad 
        2+\beta \le \beta(p-2) + 3, \quad 
        2+\beta < (d-1+p)\beta + 1. 
    \end{align*}
    The first condition above is equivalent to $0 < \beta < 1$, the second is equivalent to $\beta \le \frac{1}{p-1}$, whose right-hand is always bigger than $1$ since $p<2$ so it is not restrictive, and finally the last condition is equivalent to $\frac{1}{d-2+p} < \alpha < 1$. These conditions can always be simultaneously satisfied for $d = 2$ and $p>1$ and for $d\ge 3$ and $1 \le p < 2$, our assumptions. As done previously, the result follows by making $\varepsilon$ small enough and performing the construction described above. 
    }
\end{proof}

\appendix

\section{Appendix: technical proofs of the concentration/blow-up argument}\label{appendix}
In this appendix we give the technical proofs of Lemma~\ref{lemma.localization} and Theorem~\ref{theorem.gamma_conv_Fn}, which are strongly inspired on the arguments from~\cite{chambolle2025one}. We recall that as throughout Section~\ref{sec.absense_loops} $\nu_\star$ is a fixed minimizer of the relaxed problem and $\alpha \eqdef \mathcal{L}(\nu_\star)$. {  We recall here that $\varrho_0$ is the fixed measure we approximate with a one-dimensional measure and that $\varrho_n,\sigma_n, \Sigma_n$ are obtained with the concentration/blow-up procedure described in the start of Section~\ref{subsec.blow-up}.} 
\begin{lemma}
    The localized measure $\nu_n$ solves the following minimization problem
    \begin{equation}\label{eq.localized_problem_appendix}
        \min \left\{
            W_p^p(\sigma_n, \nu'): 
            \substack{
                \displaystyle
                \text{ there is }
                \Sigma' \in \mathcal{A}_2 
                \text{ such that }\\  
                \\
                \displaystyle
                \nu' \in \mathscr{M}_+(\Sigma'), \ 
                \nu' \ge \alpha^{-1}\H^1\mres \Sigma',\\ 
                \\
                \displaystyle
                \Sigma_n \cup \Sigma' \text{ is connected,}\\ 
                \\
                \displaystyle
                \ \nu'(\overline{B_{1}(0)}) = 
                \nu_\star\left(\Sigmayrn\right)\\
            }           
        \right\}.
    \end{equation}
\end{lemma}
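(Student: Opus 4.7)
The plan is to argue by contradiction. Suppose that there exists an admissible pair $(\Sigma',\nu')$ for the right-hand side of~\eqref{eq.localized_problem_appendix} satisfying $W_p^p(\sigma_n,\nu') < W_p^p(\sigma_n,\nu_n)$. The idea is then to build a global competitor $\tilde\nu$ to $\nu_\star$ in~\eqref{problem.shape_optimization_relaxed} by a cut-and-paste procedure, setting
\[
    \tilde\nu \eqdef \nu_\star\mres\Sigma_n + \nu',
\]
in which the local piece $\nu_\star\mres\Sigmayrn$ has been replaced by the hypothetical better competitor. The goal will be to show that $\tilde\nu$ strictly beats $\nu_\star$, thereby contradicting its minimality.

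I would first verify the admissibility of $\tilde\nu$ in~\eqref{problem.shape_optimization_relaxed} across three basic items. The total mass equals $1$ because $\nu'$ carries the prescribed mass $\nu_\star(\Sigmayrn)$; the support $\supp\tilde\nu = \Sigma_n\cup\Sigma'$ is connected by hypothesis on the competitor; and the length bound $\mathcal{L}(\tilde\nu)\le\alpha$ is obtained by adding $\alpha\,\nu_\star\mres\Sigma_n \ge \H^1\mres\Sigma_n$ (consequence of $\mathcal{L}(\nu_\star)=\alpha$) to the hypothesis $\alpha\,\nu'\ge\H^1\mres\Sigma'$.

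For the transportation cost, I would glue the original optimal plan $\gamma$ between $\varrho_0$ and $\nu_\star$ with an optimal plan $\gamma^\star$ between $\varrho_n$ and $\nu'$, defining
\[
    \tilde\gamma \eqdef \gamma\mres(\mathbb{R}^d\times\Sigma_n) + \gamma^\star,
\]
which is a valid coupling between $\varrho_0$ and $\tilde\nu$. Using the standard fact that the restriction of an optimal plan is optimal between its marginals, the piece $\gamma\mres(\mathbb{R}^d\times\Sigmayrn)$ realises $W_p^p(\varrho_n,\nu_n)$ exactly, and both total costs decompose cleanly so that
\[
    W_p^p(\varrho_0,\tilde\nu) - W_p^p(\varrho_0,\nu_\star) \le W_p^p(\varrho_n,\nu') - W_p^p(\varrho_n,\nu_n).
\]
Together with $\mathcal{L}(\tilde\nu)\le\mathcal{L}(\nu_\star)$, a strict negative sign on the right would close the contradiction.

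The main obstacle, and really the only non-routine step, is to transfer the assumed strict improvement at $\sigma_n$ into a strict improvement at $\varrho_n$. This is exactly what the choice of $\sigma_n$ in~\eqref{eq.sigma_n} is tailored for: since $\sigma_n$ lies on the $W_p$-geodesic from $\varrho_n$ to $\nu_n$, one has the additive identity
\[
    W_p(\varrho_n,\nu_n) = W_p(\varrho_n,\sigma_n) + W_p(\sigma_n,\nu_n),
\]
while the triangle inequality applied to $\nu'$ only yields
\[
    W_p(\varrho_n,\nu') \le W_p(\varrho_n,\sigma_n) + W_p(\sigma_n,\nu').
\]
Subtracting the two shows that any strict decrease at $\sigma_n$ propagates to a strict decrease at $\varrho_n$, and monotonicity of $t\mapsto t^p$ on $[0,+\infty)$ preserves the strict inequality at the level of $W_p^p$, completing the argument.
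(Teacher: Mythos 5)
Your proof is correct and follows essentially the same route as the paper's: build the global competitor $\tilde\nu = \nu_\star\mres\Sigma_n + \nu'$ from a glued transport plan, invoke optimality of $\nu_\star$ together with the restriction property of optimal plans to get $W_p(\varrho_n,\nu_n)\le W_p(\varrho_n,\nu')$, and then transfer the comparison to $\sigma_n$ via the geodesic identity plus the triangle inequality. The only difference is cosmetic — you phrase it as a contradiction whereas the paper states it directly — and your filling-in of the admissibility checks (mass, connectedness, length) is a welcome expansion of what the paper leaves implicit.
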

\begin{proof}
    Let $\gamma$ be the optimal transportation plan between $\varrho_0$ and $\nu_\star$. Recall the notation  
    \[
        \Sigmayrn \eqdef \Sigma \cap B_{r_n}(y_0) \text{ and }
        \Sigma_n \eqdef \Sigma \setminus B_{r_n}(y_0).
    \]
    By construction both sets are connected, and define the new transportation plan 
    \[
        \tilde \gamma \eqdef \gamma\mres \mathbb{R}^d\times\Sigma_n + \gamma',    
    \]
    where $\gamma'$ is optimal between $\varrho_n$ and $\nu'$. Then the new competitor $\tilde \nu \eqdef {(\pi_1)}_\sharp \tilde \gamma$ is such that $\mathcal{L}(\tilde\nu) \le \mathcal{L}(\nu_\star)$, and the optimality of $\nu_\star$ gives that
    \[
        \int_{\mathbb{R}^d\times \Sigma_n}|x-y|^p\dd \gamma + 
        \int_{\mathbb{R}^d\times \Sigmayrn}|x-y|^p\dd \gamma 
        \le 
        \int_{\mathbb{R}^d\times \Sigma_n}|x-y|^p\dd \gamma + 
        \int|x-y|^p\dd \gamma'
    \]
    Giving that $W_p^p(\varrho_n, \nu_n) \le W_p^p(\varrho_n, \nu')$ for all $\nu'$ admissible. 

    But we need to test the optimality of $\nu_n$ for the transport with initial measure given by $\sigma_n$. The latter was constructed to be a geodesic interpolation between $\varrho_n$ and $\nu_n$, see for instance~\cite[Theorem~5.27]{santambrogio2015optimal}. As such, it holds that 
    \begin{align*}
        W_p(\varrho_n, \sigma_n) + W_p(\sigma_n, \nu_n) 
        &= 
        W_p(\varrho_n, \nu_n)\\ 
        &\le 
        W_p(\varrho_n, \nu')
        \le 
        W_p(\varrho_n, \sigma_n) + W_p(\sigma_n, \nu'),
    \end{align*} 
    where above we have used the optimality of $\nu_n$ for the transport with $\varrho_n$ and the triangle inequality. Canceling the terms $W_p(\varrho_n, \sigma_n)$, the result follows.
\end{proof}

In the sequel, we prove Theorem~\ref{theorem.gamma_conv_Fn}. In fact, problem~\eqref{eq.localized_problem_appendix}, and consequently the functionals $F_n$ and $F$, have been modified from their counterparts in~\cite{chambolle2025one} in order to simplify the $\Gamma$-convergence result that follows. Whereas the formulation in~\cite{chambolle2025one} was chosen to be as general as possible; here we intend to show how we can facilitate greatly this proof by considering perturbations that are connected. 
\begin{theorem}
    The family ${\left(F_n\right)}_{n \in \mathbb{N}}$ converges to $F$ in the sense of $\Gamma$-convergence, for the topology of weak-$\star$ convergence of Radon measures. 
\end{theorem}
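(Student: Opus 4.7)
The proof follows the standard $\Gamma$-convergence recipe: a lower-bound inequality along every weak-$\star$ convergent sequence and the construction of a recovery sequence at every point. Since $\supp\varrho_0$ is bounded and the interpolation $\sigma_n$ from~\eqref{eq.sigma_n} only contracts supports, both $\bar\sigma_n$ and every competitor $\nu'$ with $F_n(\nu')<+\infty$ lie in a fixed compact subset of $\mathbb{R}^d$; on this set $W_p^p$ is jointly continuous under narrow convergence, so the whole analysis reduces to controlling the geometric constraints (density lower bound, at-most-two components, connection with $(\Sigma_n-y_0)/r_n$, and prescribed total mass) as $n\to\infty$.

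For the $\Gamma$-$\liminf$, take $\nu_n\cvstar{}\nu$ with $F_n(\nu_n)<+\infty$, realised by some $\Sigma_n'\in\mathcal{A}_2$. The density constraint together with Ahlfors regularity of $\nu_\star$ bounds $\H^1(\Sigma_n')$ uniformly, so by Blaschke's compactness, up to a subsequence, $\Sigma_n'\to\Sigma'\subset\overline{B_1(0)}$ in Hausdorff distance, with $\Sigma'\in\mathcal{A}_2$ (Hausdorff limits of connected compact sets are connected, hence the number of components cannot increase in the limit). The density version of \Golab's theorem (Theorem~\ref{theorem.Golab_localversion}) applied to $\alpha\nu_n\ge\H^1\mres\Sigma_n'$ yields $\supp\nu\subset\Sigma'$ and $\alpha\nu\ge\H^1\mres\Sigma'$; narrow convergence combined with the definition of the $1$-density gives $\nu(\overline{B_1(0)})=\lim_n\nu_\star(\Sigmayrn)/r_n=2\theta_1(\nu_\star,y_0)$; and the inclusion $T_{y_0}\Sigma\cap\partial B_1\subset\Sigma'$ follows because, by Lemma~\ref{lemma.noncut_property}, the set $(\Sigma_n-y_0)/r_n$ meets $\partial B_1$ at two points $\bar y_{n,i}$ converging to $p_i\in T_{y_0}\Sigma\cap\partial B_1$ (Lemma~\ref{lemma.blowup_domain_measure}), and connectedness of $(\Sigma_n-y_0)/r_n\cup\Sigma_n'$ forces some point of $\Sigma_n'$ arbitrarily close to each $\bar y_{n,i}$. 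Finally, lower semicontinuity of $W_p^p$ yields $F(\nu)\le\liminf_n F_n(\nu_n)$.

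For the $\Gamma$-$\limsup$, given $\nu$ admissible for $F$ with support in some $\Sigma'\in\mathcal{A}_2$ containing $\{p_1,p_2\}=T_{y_0}\Sigma\cap\partial B_1$, my candidate recovery sequence is
\[
    \nu_n \eqdef \nu + \alpha^{-1}\H^1\mres (L_{n,1}\cup L_{n,2}) + c_n\,\delta_{q_n},
\]
where $L_{n,i}\subset\overline{B_1}$ is a short curve joining $p_i$ to $\bar y_{n,i}$, of total length $o(1)$ by~\eqref{eq.blowup_hausdorff_dist}; $\Sigma_n'\eqdef\Sigma'\cup L_{n,1}\cup L_{n,2}$ remains in $\mathcal{A}_2$ and $(\Sigma_n-y_0)/r_n\cup\Sigma_n'$ is connected; $q_n\in\Sigma_n'$; and $c_n\in\mathbb{R}$ is the scalar tuning $\nu_n(\overline{B_1(0)})=\nu_\star(\Sigmayrn)/r_n$. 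Since $\nu_\star(\Sigmayrn)/r_n\to\nu(\overline{B_1(0)})$ and the arc contribution is $o(1)$, one has $c_n\to 0$, whence $\nu_n\cvstar{}\nu$; continuity of $W_p^p$ on the common compact support then gives $\lim_n F_n(\nu_n)=F(\nu)$. The main technical obstacle is that the sign of $c_n$ is not controlled in advance: a negative Dirac would violate the constraint $\nu_n\ge\alpha^{-1}\H^1\mres\Sigma_n'$. This is handled by a preliminary density argument: measures with a strict density excess on a small open subarc of $\Sigma'\setminus\{p_1,p_2\}$ are dense in the admissible class of $F$ for the weak-$\star$ topology and with continuous energy, so a diagonal argument reduces the recovery to this dense subclass, on which the negative correction $c_n$ can always be absorbed within the excess region without touching the endpoints or creating a third component.
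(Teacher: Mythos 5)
Your $\Gamma$-$\liminf$ argument is essentially the paper's: Blaschke compactness on the sets $\Sigma_n'$, \Golab's theorem for the density bound, lower semicontinuity of $W_p^p$, and the endpoint argument for the constraint $T_{y_0}\Sigma\cap\partial B_1\subset\Sigma'$. (One small inaccuracy: the uniform bound on $\H^1(\Sigma_n')$ does not come from Ahlfors regularity of $\nu_\star$ but simply from the constraint $\alpha\nu_n'\ge\H^1\mres\Sigma_n'$ combined with the prescribed total mass $\nu_n'(\overline{B_1})=\nu_\star(\Sigmayrn)/r_n$, which converges.)

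Your $\Gamma$-$\limsup$ takes a genuinely different route. The paper rescales $\nu'$ by $a_n$, compensates by shrinking $\Sigma'$ around the two endpoints $\pm\tau$ by the factor $1/s_n=\min(1,a_n)$, translates each component onto $y_{i,n}/r_n$, and projects back onto $\overline{B_1}$; the ordering $s_n\ge 1$ is precisely what guarantees that the density constraint $\alpha\nu_n'\ge\H^1\mres\Sigma_n'$ survives regardless of the sign of $a_n-1$. You instead keep $\Sigma'$ fixed, graft on the short arcs $L_{n,i}$ of total length $o(1)$ to restore connectedness to $(\Sigma_n-y_0)/r_n$, and correct the mass with a Dirac term $c_n\delta_{q_n}$. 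You correctly identify that the sign of $c_n$ is the crux: if $c_n<0$ the Dirac correction violates both nonnegativity and $\alpha\nu_n'\ge\H^1\mres\Sigma_n'$. Your proposed fix, a preliminary density/diagonal argument that reduces to measures with strict density excess, is the right idea, but you leave the key step unjustified, and it is not trivial: since the total mass is pinned to $2\theta_1(\nu_\star,y_0)$, one cannot simply add mass, and the only way to create strict excess while preserving the endpoint constraint $T_{y_0}\Sigma\cap\partial B_1\subset\Sigma'$ is to contract $\Sigma'$ toward the endpoints $\pm\tau$ and push $\nu'$ forward. Once spelled out, this approximation step is precisely the paper's rescaling-around-$\pm\tau$ mechanism, so the two constructions become essentially equivalent. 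I would therefore regard the density claim as a genuine gap in the present write-up (the diagonal argument and the approximation of an arbitrary admissible $\nu$ by one with strict excess both need to be made explicit), even though the intended approach can in principle be completed.
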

\begin{proof}
    Let us start with the $\Gamma-\liminf$, so given some measure $\nu'$ and a sequence ${\left(\nu'_n\right)}_{n \in \mathbb{N}}$ converging in the narrow topology to $\nu'$, and such that $\liminf_{n \to \infty} F_n(\nu'_n) < +\infty$, so we can assume that for each $n \in \mathbb{N}$ there is a set $\Sigma_n'$ such at most $2$ connected components such that 
    \[
        \Sigma_n' = \supp \nu'_n, \quad 
        \Sigma_n' \subset \overline{B_1(0)}, \quad
        \alpha \nu'_n \ge \H^1\mres \Sigma_n'.  
    \]

    Since $\Sigma_n'\subset \overline{B_1(0)}$, we can apply Blaschke's Theorem assuming that $\Sigma_n' \cvstrong{n \to \infty}{d_H} \Sigma'$, up to a not relabelled subsequence. The limit $\Sigma'$ also has at most $2$ connected components; and applying \Golab's Theorem to $\nu'$ restricted to each connected component it holds that 
    \[
        \alpha \nu' \ge \H^1\mres \Sigma' \text{ and } \nu' \in \mathscr{M}_+(\Sigma').    
    \]
    In addition, recall that by the construction from Lemma~\ref{lemma.noncut_property} 
    \[
        \frac{\Sigma_n - y_0}{r_n} \cap \partial B_1(0) 
        = \{y_{1,n}, y_{2,n}\}
        = 
        \frac{\Sigmayrn - y_0}{r_n} \cap \partial B_1(0).
    \]
    Since $\displaystyle \frac{\Sigmayrn - y_0}{r_n}$ converges to $[-\tau, \tau]$ we must have that $y_{i,n}\cvstrong{n \to \infty}{}{(-1)}^i\tau$ for $i = 1,2$. But since $\Sigma_n' \subset \overline{B_1(0)}$, the only way it is connected to $\displaystyle \frac{\Sigma_{n} - y_0}{r_n}$ is if it contains at least one of $y_{i,n}$, or both if it has two connected components. We then conclude that at least one of $-\tau, \tau$ belong to $\Sigma'$. 

    As a result, $\nu'$ is in the domain of $F$ and from the lower semi-continuity of the Wasserstein distance we get that 
    \begin{align*}
        F(\nu') 
        =
        W_p^p(\bar \sigma, \nu') 
        \le 
        \liminf_{n \to \infty}
        W_p^p(\bar \sigma_n, \nu'_n)
        =  
        \liminf_{n \to \infty}
        F_n(\nu'_n). 
    \end{align*}

    \par $\Gamma$-limsup: The strategy to prove the limsup is based on three steps: first we renormalize $\nu'$ to satisfy the mass constraint in $F_n$, which may break the condition $\alpha \nu_n'\ge \H^1\mres \Sigma'$, so we shrink the support to satisfy it again. Assuming that $\Sigma'$ has two connected components $\Sigma'_1,\Sigma'_2$, we translate the mass of each of their shrunk versions so that it is connected to $\displaystyle \frac{\Sigma_n - y_0}{r_n}$. Since some parts of the support may get out of $\overline{B_1(0)}$, we project the residual mass onto $\overline{B_1(0)}$.

Let us construct a recovery sequence ${(\nu_n')}_{n\in \mathbb{N}}$. By the constraint that $T_{y_0}\Sigma\cap \partial B_1(0) \subset \Sigma'$, the unit vectors $\pm\tau$ must be contained in each of the connected components $\Sigma_1',\Sigma_2'$. It is also possible that one of them is just a singleton $\pm\tau$ and only the other has positive length, or that $\Sigma'$ has only one connected component which contains both, but the following argument works for both cases with straightforward adaptations. By the Kuratowski (even Hausdorff) convergence  of $\Sigmayrn$
 towards $[-\tau,\tau]$, for each $i\in I$, there exists a sequence ${(y_{i,n})}_{n\in \mathbb{N}}$ such that $y_{i,n}\in \Sigmayrn$ for each $n\in \mathbb{N}$, and $y_{i,n}\to {(-1)}^{i}\tau$. We then define: 
\begin{equation*}
    a_n\eqdef 
    \frac{\nu_{\star}(\Sigmayrn)}{2r_n\theta_{1}(\nu_\star,y_0)},
    \quad\mbox{and}\quad   
    s_n\eqdef \max(1,a_{n}^{-1}), 
\end{equation*}
noting that $a_n\to 1$ and $s_n\to 1$, and we introduce the map $T_n$,
\[
    T_n(y) 
    \eqdef 
    (y + {(-1)}^{i+1}\tau)/s_n+y_{i,n}, \text{ if $y \in \Sigma'_i$}
\]
The map $T_n$ shrinks each connected component $\Sigma'_i$ and translates it to the corresponding $y_{i,n} \in \Sigmayrn$. It follows that 
\[
    \frac{\Sigma_n - y_0}{r_n} \cup T_n\left(\Sigma'\right)
\]
is connected, but not necessarily contained in $\overline{B_1}$; so we project it onto it and preserve connectedness. To perform this operation, let $\proj_{B_1}$ denote the projection onto the closed unit ball and define
\[ 
    \nu_n'\eqdef 
    {(\proj_{B_1}\circ T_n)}_{\sharp}(a_n\nu')
    \text{ and }
    \Sigma_n' 
    \eqdef 
    (\proj_{B_1}\circ T_n)(\Sigma').
\]
 
Let us check that $\nu_n'$ converges to $\nu'$ in the narrow topology. For $y\in \Sigma'_{i}$,
\[
    |y-T_{n}(y)|\le |y|(1-1/s_{n}) + |{(-1)}^{i+1}/s_n-y_{i,n}|\xrightarrow[n\to +\infty]{} 0.
\]
By the dominated convergence theorem, we get that for any $\phi\in \mathscr{C}_b(\mathbb{R}^d)$,

\[ 
    \int\phi\dd \nu_n' 
    = 
    a_n\int_{\Sigma'}\phi\left(\proj_{B_1}\circ T_n(y)\right)\dd \nu'(y) 
    \xrightarrow[n\to +\infty]{}  
    \int_{\Sigma'}\phi\left(y\right)\dd \nu'(y)
\]
so that $\nu_n'\xrightharpoonup[n\to +\infty]{}\nu'$ in the narrow topology.

Let us now check the constraints in $F_n$. From the properties of image measures, we see that the mass of $\nu_n'$ is concentrated in $\Sigma_n' \subset \overline{B_1(0)}$ which is such that 
\[
    \frac{\Sigma_n - y_0}{r_n}\cup \Sigma'_n,
\]
is connected by the previous arguments, and we also have
\[
    \nu_n'(\overline{B_{1}(0)}) 
    = 
    a_n\nu'(\mathbb{R}^{d})
    = 
    \frac{\nu_{\star}(\Sigmayrn)}{r_n}
\]
so that $\nu_n'$ has the mass prescribed by $F_n$.

It only remains to show that is satisfies the density constraints, take any non-negative $\phi\in \mathscr{C}_b(\mathbb{R}^d)$,
\begin{align*}
    \alpha\int_{\mathbb{R}^d}\phi\dd \nu_n' 
    &= \alpha a_n\int_{\Sigma'}
    \phi\left(\proj_{B_1}\circ T_n(y)\right)\dd \nu'(y)\\
    &\ge a_n
    \int_{\Sigma'}\phi\left(\proj_{B_1}\circ T_n(y)\right)\dd \H^{1}(y)\\
    &= a_n s_{n}\int_{\Sigma_n'}
    \phi\left(\proj_{B_1}\circ T_n(y')\right)\dd \H^{1}(y')
    \ge \int_{\Sigma'_{n}} \phi \dd \H^{1}. 
\end{align*}
It follows that $\alpha \nu_n'\geq \H^1\mres \Sigma'_{n}$ and we conclude that $F_n(\nu_n) < \infty$, for all $n \in \mathbb{N}$.

By the continuity of the Wasserstein distance with
respect to the narrow convergence (provided the measures are supported in some common compact set), we have that:
    \[
        F_n(\nu_n') \xrightarrow[n \to \infty]{} F(\nu').
    \]
The $\Gamma$-convergence follows.

\end{proof}


\bibliographystyle{plain}
\bibliography{references.bib}


\end{document}